\newtheorem{Theorem}{Theorem}[section]
\newtheorem{Proposition}[Theorem]{Proposition}
\newtheorem{Definition}[Theorem]{Definition}
\newtheorem{Lemma}[Theorem]{Lemma}
\newtheorem{Corollary}[Theorem]{Corollary}
\newtheorem{Fact}[Theorem]{Fact}
\def\forkindep{\mathrel{\raise0.2ex\hbox{\ooalign{\hidewidth$\vert$\hidewidth\cr\raise-0.9ex\hbox{$\smile$}}}}}
\begin{document}

\title{Definable Topological Dynamics of $SL_2(\mathbb{C}((t)))$}

%% Group authors per affiliation:
\author{Thomas Kirk \\ University of Central Lancashire}
\maketitle
%% or include affiliations in footnotes:
%\author[mymainaddress,mysecondaryaddress]{Elsevier Inc}
%\ead[url]{www.elsevier.com}

%\author[mysecondaryaddress]{Global Customer Service\corref{mycorrespondingauthor}}
%\cortext[mycorrespondingauthor]{Corresponding author}
%\ead{support@elsevier.com}

%\address[mymainaddress]{1600 John F Kennedy Boulevard, Philadelphia}
%\address[mysecondaryaddress]{360 Park Avenue South, New York}

\begin{abstract}
We initiate a study of definable topological dynamics for groups definable in metastable theories. Specifically, we consider the special linear group $G = SL_2$ with entries from $M = \mathbb{C}((t))$; the field of formal Laurent series with complex coefficients. We prove such a group is not definably amenable, find a suitable group decomposition, and describe the minimal flows of the additive and multiplicative groups of $\mathbb{C}((t))$. The main result is an explicit description of the minimal flow and Ellis Group of $(G(M),S_G(M))$ and we observe that this is not isomorphic to $G/G^{00}$, answering a question as to whether metastability is a suitable weakening of a conjecture of Newelski.
\end{abstract}

\section{Introduction}

Given a model $M$ and a definable group $G$, we can obtain an action of $G$ on $S_G(M)$, the space of complete types concentrating on $G$. This is a definable $G$-flow, and further a $G(M)$-flow in the context of classical topological dynamics. Since $S_G(M)$ is Hausdorff, locally compact, we can construct a unique (up to isomorphism) Ellis Group from the flow $(G(M),S_G(M))$; the details of this construction can be found in \textbf{\cite{TopDynStableGroup}} and \textbf{\cite{Newelski2009}}. For details on the dynamical systems and Ellis Groups, see \textbf{\cite{Ellis}} and \textbf{\cite{Aus}}.

In the stable setting, it was proven in \textbf{\cite{TopDynStableGroup}} that $S_G(M)$ has a unique $G$-invariant minimal subset which is precisely the set of generic types of $G$. Newelski shows a relationship between this set and the quotient $G/G^{00}$, and conjectured that this relationship should extend outside of the stable setting to all $NIP$ theories. Preliminary work in \textbf{\cite{Newelski2012}} extends to the $o$-minimal setting with $G$ definably compact, with \textbf{\cite{2015APNY}}, \textbf{\cite{CONVERSANO2012}} and \textbf{\cite{CS_NIP}} extending to definably amenable groups in $NIP$ theories.

A counterexample to the conjecture was found in \textbf{\cite{SL2R}} for the case $G(M) = SL_2(\mathbb{R})$, and a further counterexample of $p$-adic algebraic groups was shown in \textbf{\cite{SL2QP}}. 

Work then moved instead towards describing the Ellis Group and considering the relationship to $G/G^{00}$. An explicit description of the Ellis Groups for groups admitting a compact-torsion free decomposition for $o$-minimal expansions of the reals was shown in \textbf{\cite{Jagiella}}. In \textbf{\cite{Yao}} they expanded this work by giving a description for Ellis Groups in this setting, but now interpreted in an arbitrary elementary extension.

Most recently, Jagiella \textbf{\cite{Jag2017}} works to remove $o$-minimal specific notions, demonstrating a way of computing Ellis Groups for $NIP$ groups that admit an $fsg$-definably amenable group decomposition, or for groups that admit a definably (extremely) amenable normal subgroup.

The study of metastable theories is shown extensively in \textbf{\cite{StabDom}} and \textbf{\cite{HR2017}}, with $ACVF$ as the motivating example. The work in \textbf{\cite{HR2017}} focuses on definable groups in metastable theories, demonstrating useful results on the existence of generic types in stably dominated groups. The motivation behind this work was to investigate whether analogues of Newelski's results in stable theories could be adapted to the metastable setting.

In this paper, we prove a group decomposition for $G = SL_2(\mathbb{C}((t)))$, and further demonstrate that this group is not definably amenable. We find the minimal subflows of $(\mathbb{C}((t)),+)$, $(\mathbb{C}((t))^*, \times)$ and the Borel subgroup of $SL_2(\mathbb{C}((t)))$, acting on their space of complete types. This leads to an explicit description of the Ellis Group of $(G, S_G(M))$, shown below. We remark that $SL_2(\mathbb{C}((t))^{00} = SL_2(\mathbb{C}((t)))$, and so the Ellis Group is not isomorphic to $G/G^{00}$, providing a negative answer to a question as to whether metastability is a suitable weakening of the Ellis Group conjecture of Newelski. Further, we hope to initiate study towards an explicit description of the Ellis Group for groups definable in metastable theories. 

\begin{Theorem} \label{Theorem_Main}
Let $M = \mathbb{C}((t))$, $G = SL_2$, and $B$ be the Borel subgroup of upper triangular matrices in $G$. Then the Ellis Group of $(G(M),S_G(M))$ is isomorphic to $B(M)/B(M)^{0}$.
\end{Theorem}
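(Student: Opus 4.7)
The strategy is to reduce the computation from the whole group $G(M)$ to its Borel subgroup $B(M)$, invoking the group decomposition of $SL_2(\mathbb{C}((t)))$ established earlier in the paper. The guiding principle, adapted from Jagiella's treatment of $NIP$ groups with a definably amenable normal part, is that the Ellis group is controlled by the ``unbounded'' Borel factor, while the compact piece coming from $SL_2(\mathcal{O})$ acts on the minimal flow without enlarging the Ellis group.

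First, I would identify a minimal subflow $\mathcal{M}_B \subseteq S_B(M)$ using the semidirect decomposition $B = T \ltimes U$, where $T$ is the diagonal torus and $U$ the unipotent radical, so that $T \cong (\mathbb{C}((t))^{*},\times)$ and $U \cong (\mathbb{C}((t)),+)$. Since the minimal flows of these two groups are described in earlier sections, $\mathcal{M}_B$ is obtained by ``stacking'' the two minimal flows compatibly along the semidirect action, which is the standard construction for semidirect products of definable flows.

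Second, I would push $\mathcal{M}_B$ forward along the inclusion $B(M) \hookrightarrow G(M)$ and argue that its $G(M)$-closure inside $S_G(M)$ is itself minimal, and in fact equals the image of $\mathcal{M}_B$. The key tool here is the Iwasawa-style decomposition: every $g \in G(M)$ is expressible as $b \cdot k$ with $k \in SL_2(\mathcal{O})$, and one shows that the compact factor stabilises generic types supported on $B$, so the $G(M)$-orbit closure collapses onto the $B(M)$-orbit closure.

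Third, I would fix an idempotent $u \in \mathcal{M}$ compatible with the semidirect structure and compute $u\mathcal{M}$. Because $(\mathbb{C}((t)),+)$ is divisible, it admits no proper definable subgroup of finite index, hence $U(M) = U(M)^{0}$; this forces the unipotent contribution to the Ellis group to be trivial, and the torus contribution dominates. Matching this quotient with $B(M)/B(M)^{0}$ via the short exact sequence $1 \to U \to B \to T \to 1$ and the triviality of $U/U^{0}$ then gives the claimed isomorphism.

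The main obstacle is the second step: verifying that no additional minimal subflows arise from the Weyl-group element $w$ (which swaps $B$ with its opposite Borel) or from nontrivial $SL_2(\mathcal{O})$-translates of generic Borel types. Classical $NIP$ arguments for separating bounded from unbounded behaviour must be adapted to the metastable setting, where stable domination by the residue field and the value group replaces the honest $NIP$ machinery used in the $\mathbb{R}$ and $\mathbb{Q}_p$ cases, and this is where the bulk of the technical work should sit.
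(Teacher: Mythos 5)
Your overall shape is right --- reduce to the Borel, build its minimal flow from the additive and multiplicative pieces, locate an idempotent, and match the Ellis group with $B(M)/B(M)^0$ --- and you correctly flag that the second step (pushing from $B$ to $G$ and controlling the Weyl/compact contributions) is where the work lives. But the mechanism you propose for that second step does not exist here, and this is a genuine gap, not a detail to be filled in.

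You write that "every $g \in G(M)$ is expressible as $b \cdot k$ with $k \in SL_2(\mathcal{O})$, and one shows that the compact factor stabilises generic types supported on $B$." This is the standard Iwasawa-plus-compactness argument from the $SL_2(\mathbb{R})$ and $SL_2(\mathbb{Q}_p)$ cases, but the whole point of the $\mathbb{C}((t))$ setting is that $SL_2(\mathbb{C}[[t]])$ is \emph{not} compact, not even locally compact (the value group is $\mathbb{Z}$, which has no least positive element issue, but the residue field $\mathbb{C}$ is infinite, so $\mathcal{O}$ is not compact). The paper explicitly records that the Iwasawa-style approach of the $p$-adic case cannot be adopted for exactly this reason, and instead proves a different decomposition, $G(M) = \mathbb{Z}/4\mathbb{Z} \cdot H(M) \cdot B(M)$ with $H$ the lower unipotent subgroup (Proposition \ref{Proposition_C((t))Decomp}), which is closer to a Bruhat decomposition than to Iwasawa. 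Your step 2 as written would stall at the point where you invoke compactness.

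Correspondingly, your picture of the minimal $G(M)$-flow is too small. It does not "collapse onto the $B(M)$-orbit closure": the idempotent the paper finds is $p_{\infty,C_0} * p_0$, whose $H(M)$- and $\mathbb{Z}/4\mathbb{Z}$-translates produce the set $V' = V \cup \{ p_{0,k^{-2}\mathbb{K}^{*0}} * p_{k^3\mathbb{K}^{*0}} \}$, and the minimal subflow is $cl(V') \subset S_1(M) * \mathcal{J}$, carrying a nontrivial $S_1(M)$-coordinate in addition to the $\mathcal{J}$-coordinate. The Ellis group is recovered only after hitting this with the idempotent: $p_{\infty,C_0} * p_0 * cl(V') = p_{\infty,C_0} * \mathcal{J}$, which is a copy of $\mathcal{J} \cong B/B^0$ sitting inside a strictly larger flow. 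Getting there requires the explicit orbit computations under $H(M)$ (Proposition \ref{Proposition_HMEquality}), $B(M)$ (Propositions \ref{Proposition_BMLeftInclusion}, \ref{Proposition_BMEquality}), and $\mathbb{Z}/4\mathbb{Z}$ (Propositions \ref{Proposition_Z4ZLeftInclusion}, \ref{Proposition_Z4ZEquality}), together with Hensel's lemma to control which coset of $\bigcap_n P_n$ various expressions land in. Your third step, using divisibility of $(\mathbb{C}((t)),+)$ to force $U = U^0$ and hence to reduce $B/B^0$ to the torus part, is consistent with what the paper proves ($\mathbb{K}^{00}=\mathbb{K}$ and $\mathbb{K}^{*00}=\mathbb{K}^{*0}$), but it only buys you the Ellis group of $B$; the passage from $B$ to $G$ needs to be rebuilt without the compactness crutch.
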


This paper is split into four main sections. In the first we recall the notions of definable topological dynamics as well as results on the model theory of $\mathbb{C}((t))$. In the second section, we prove $SL_2(\mathbb{C}((t)))$ is not definably amenable and establish a suitable group decomposition for $SL_2(\mathbb{C}((t))$. The third section contains the preliminary work involved in showing the invariant types and the minimal subflows of the additive and multiplicative groups of $\mathbb{C}((t))$ and of the Borel subgroup of $SL_2(\mathbb{C}((t))$. In the fourth and final section, we apply these results to build a minimal subflow of $(G(M),S_G(M))$ and provide an explicit description of the Ellis Group.

\section{Preliminaries} \label{Section_Preliminaries}

The majority of notation here is standard. $L$ will denote a language and $M$, $N$ will denote models unless otherwise stated, with $\bar{M}$ a global sufficiently saturated elementary extension of $M$. $M^{ext}$ will denote the Shelah expansion of $M$, obtained by adding a predicate for every externally definable subset of $M$. $G$, $H$ will denote groups, with lowercase $g$, $h$ denoting elements of $G$ and $H$ respectively. $p$, $q$ and $r$ are complete types. Most often $x,y$ will be variables with $a$,$b$,.. and $\alpha$,$\beta$,.. being parameters or field elements.

Given an $L$-structure $M$, we denote by $S(M)$ the space of complete types over $M$. If $G$ is a definable group ( more generally a definable set ), we use $S_G(M)$ to denote the space of complete types that contain the formula defining $G$. We use $G(M)$ to denote the interpretation of $G$ in $M$. $S_{G,ext}(M)$ is the space of complete external types concentrating on $G$.

By $\mathbb{C}((t))$ we mean the field of formal Laurent series with coefficients from $\mathbb{C}$; that is, the elements of $\mathbb{C}((t))$ are of the form $\sum\limits_{i=n}^{\infty} a_it^i$ for some $n \in \mathbb{Z}$. We now state some well known results about the field $\mathbb{C}((t))$, and more generally $k((t))$, where $k$ is algebraically closed. The key reference here is \textbf{\cite{Francoise}}. Our language will be the language of rings equipped with the following predicates; $P_n(x) \iff \exists y ( y^n = x )$ ; $N(x) \iff v(x) = 1$ and $x \mid y \iff v(x) \leq v(y)$. 

$\mathbb{C}((t))$ here is considered as a valued field with the $t$-adic valuation; given by $v(\sum\limits_{i = n}^{\infty} a_it^i) = n$, where $a_n$ is the first non-zero coefficient. We will use $\mathfrak{M}$ to denote the maximal ideal of $\mathbb{C}[[t]]$; that is, the elements $x \in \mathbb{C}((t))$ such that $v(x) > 0$. We note that $\mathbb{C}[[t]]$ and $\mathbb{C}$ are the valuation ring and residue field (resp.) of $\mathbb{C}((t))$.

We will use $res(a)$ to denote the residue of $a \in \mathbb{C}[[t]]$. In this setting, the angular component map $ac$ is not definable. However, for complete types which contain a formula $v(x) = \gamma$, we can access the angular component by considering $res(t^{-\gamma}x)$. Of course, this is only possible when $t^{-\gamma}$ is allowed as a parameter.

We recall the definition of metastable theories. It is shown in \textbf{\cite{StabDom}} that the theory of $\mathbb{C}((t))$ is metastable. First, a type $p$ (over $C$) is stably dominated if there exists a pro definable map (over $C$) $\alpha : p \rightarrow D$ with $D$ stable and stably embedded, such that for any $a \vDash p$ and tuple $b$, $\alpha(a) \forkindep_C D \cap dcl(b)$ implies $tp(b/C\alpha(a)) \vdash tp(b/Ca)$. A theory $T$ is metastable (over a sort $\Gamma$) if any set of parameters $C_0$ contained in a set $C$ (called a metastability basis), such that for any $a$ there exists a pro-definable map (over $C$) $\gamma_C p \rightarrow \Gamma$ with $tp(a/\gamma_C(a))$ stably dominated.

The model theory of fields $k((t))$, for $k$ algebraically closed, can be found in \textbf{\cite{Francoise}}, and we summarise some of the results here.

\begin{Fact} \textbf{\cite{Francoise}} \label{Fact_Language} \leavevmode
\begin{itemize}
 \item $\mathbb{C}((t))$ admits quantifier elimination in the language $(0, 1, + , \times, \hspace{1mm} \mid \hspace{1mm} , N ) \cup \{ P_n \text{ : } n \in \mathbb{N} \}$. 
 \item $\mathbb{C}((t))$ is NIP.
 \item The complete $1$-types over $M$ are definable. 
\end{itemize}
\end{Fact}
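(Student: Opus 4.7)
The plan is to verify each of the three statements using standard techniques for henselian valued fields of equicharacteristic zero, noting that $\mathbb{C}((t))$ is henselian with algebraically closed residue field $\mathbb{C}$ and value group $\mathbb{Z}$, which places us in an especially tame setting.

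For quantifier elimination, I would first apply a Denef--Pas style QE in a multi-sorted language equipped with an angular component map, and then translate back into the one-sorted language of the statement. The translation proceeds as follows: the predicate $N$ picks out the uniformizer $t$ up to a unit, so the elements $t^n$ give definable access to every valuation in $\mathbb{Z}$; together with divisibility $\mid$, this encodes the full value group. The predicates $P_n$ handle $n$-th power conditions, and because $\mathbb{C}$ is algebraically closed, one has $P_n(x)\iff n\mid v(x)$, so the $P_n$'s essentially encode Presburger-style congruences on the valuation. The residue can then be extracted as $\operatorname{res}(t^{-\gamma}x)$ once $\gamma = v(x)$ is accessible, and residue-field formulas become quantifier-free via QE in ACF. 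The main obstacle is the detailed back-and-forth showing that this language is rich enough to eliminate every existential quantifier; in particular, verifying that no finer congruence-or-residue data is left unrepresented.

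For NIP, I would invoke Delon's theorem: a henselian valued field of residue characteristic zero is NIP provided its residue field and value group are both NIP. Here $\mathbb{C}$ is strongly minimal (hence NIP), and $\mathbb{Z}$ with its Presburger structure is dp-minimal (hence NIP), so the transfer theorem yields NIP for $\mathbb{C}((t))$.

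For definability of complete $1$-types over $M$, one uses QE to reduce the problem to a combinatorial classification. A complete $1$-type $p(x)$ is determined by: the cut of $v(x)$ in $v(M) = \mathbb{Z}$; the $P_n$-congruence data on $v(x)$; and, when $v(x) = \gamma$ is realised in $v(M)$, the residue of $t^{-\gamma}x$ in $\mathbb{C}$. Each component gives a definable family --- $1$-types in ACF$_0$ are definable by stability, and Presburger $1$-types over $\mathbb{Z}$ (cuts together with mod-$n$ conditions) are definable by the elementary classification of cuts. Combining these through QE yields definability of $p$ over $M$, with the only subtlety being the compatibility of the valuation cut with the residue datum, which is automatic since the residue is only meaningful once $\gamma$ is fixed.
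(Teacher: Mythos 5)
The paper does not actually prove this statement; it is labelled a Fact and cited directly from Delon's work (\cite{Francoise}). So there is no internal argument of the paper's to compare against, and what you are offering is a proof sketch from scratch. With that caveat, here is an assessment.

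Your NIP argument is correct: the Delon-style transfer for equicharacteristic-zero henselian valued fields reduces NIP to the residue field and value group, and $\mathbb{C}$ and $(\mathbb{Z},+,<)$ (Presburger) are both NIP. Your reading of the $P_n$ predicates as encoding $n \mid v(x)$ is also correct for the theory of $\mathbb{C}((t))$ --- it relies on Hensel's lemma together with the fact that the residue field is algebraically closed of characteristic zero, and is a first-order consequence of the theory rather than a definitional fact. The Denef--Pas detour for quantifier elimination is plausible as an outline, but the translation back to the one-sorted language is where the real work lives, since the angular component map is not definable in that language; you flag this but do not resolve it.

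The genuine gap is in your classification of complete $1$-types, which is where the definability argument actually has to engage with the structure. You claim that a complete $1$-type is determined by the cut of $v(x)$ in $\mathbb{Z}$, the $P_n$-data on $v(x)$, and (when $v(x) = \gamma$ is realised) the residue $res(t^{-\gamma}x)$. This is false. For instance, $tp(1/M)$ and the type of $1 + \alpha t$ with $\alpha \in \mathbb{K}$ transcendental over $\mathbb{C}$ share all three invariants (valuation $0$, all $P_n$ hold, residue $1$) yet are distinct complete types; more generally, distinct types can agree on all data localised at $0$ but diverge in the behaviour of $v(x-a)$ for $a \in M \setminus \{0\}$. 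After QE the quantifier-free data involves polynomials $f(x) \in M[x]$, not just $x$, and the reduction of this to combinatorial invariants forces one to track the ball structure of $x$ relative to every $a \in M$. This is exactly what the paper's Lemma \ref{Lemma_1TypesC((t))} records: the types $p_{a,C}$ (infinitesimally close to $a$, in a coset $C$ of $\mathbb{K}^{*0}$), the types $p_{\infty,C}$ (negatively infinite valuation, in a coset), and the types $p_{a,n,trans}$ ($v(x-a)=n$ with $res((x-a)t^{-n})$ transcendental). Your ACF and Presburger definability observations are the right ingredients, but they apply to this richer family of invariants, not to the data you listed; as written, your classification step would already fail at the first non-realised type not centred at $0$.
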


\begin{Definition} \label{Definition_CoHeirs}
Let $p$ be a type over some model $M$ of a theory $T$, and $q \in S(B)$ an extension of $p$ to $B \supset M$.
\begin{itemize}
 \item We call $q$ an \textbf{heir} of $p$ if for every $L(M)$-formula $\phi(x,y)$ such that $\phi(x,b) \in q$ for some $b \in B$ there is some $m \in M$ with $\phi(x,m) \in p$.
 \item We call $q$ a \textbf{coheir} of $p$ if $q$ is finitely satisfiable in $M$. 
\end{itemize}
\end{Definition}

\begin{Fact} \label{Fact_UniqueHeirsCoheirs} \textbf{\cite{SBook}}
Let $T$ have NIP and suppose that all complete types over $M$ are definable. Then every complete type $p$ over $M$ has a unique heir and unique coheir in $S(\bar{M})$. 
\end{Fact}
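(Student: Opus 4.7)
The plan is to prove uniqueness of the heir and of the coheir separately, since only the coheir case genuinely needs NIP; the heir case follows from definability alone. The first step is to use definability of $p$ to extract a definition schema: for each $L$-formula $\phi(x,y)$ there is an $L(M)$-formula $d_p\phi(y)$ with $\phi(x,m)\in p \iff M\models d_p\phi(m)$ for all $m\in M$. This furnishes the canonical definable extension $\bar p\in S(\bar M)$ given by $\phi(x,b)\in\bar p \iff \bar M\models d_p\phi(b)$. Consistency and completeness of $\bar p$ are routine, and $\bar p$ is an heir by elementarity: if $\phi(x,b)\in\bar p$ then $d_p\phi$ is satisfiable in $\bar M$, hence in $M$, producing a witness $m\in M$ with $\phi(x,m)\in p$.

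For uniqueness of the heir, suppose $q\in S(\bar M)$ is another heir with $q\ne\bar p$, witnessed by some $\phi(x,b)$ with, say, $\phi(x,b)\in q$ and $\bar M\models\neg d_p\phi(b)$. The key move is to consider the $L(M)$-formula
\[
\psi(x,y)\;:=\;\phi(x,y)\wedge\neg d_p\phi(y).
\]
Since $\bar M\models\neg d_p\phi(b)$, the instance $\psi(x,b)$ is equivalent to $\phi(x,b)$ and so lies in $q$. Applying the heir property to $\psi$ yields $m\in M$ with $\psi(x,m)\in p$. Both conjuncts must then belong to $p$: from $\phi(x,m)\in p$ we get $M\models d_p\phi(m)$, while the second conjunct forces $M\models\neg d_p\phi(m)$, a contradiction.

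For the coheir I follow a dual strategy, but here NIP is essential. Given $b\in\bar M$, definability of $tp(b/M)$ supplies, for each $\phi(x,y)$, an $L(M)$-formula $d\phi(x)$ with $\phi(m,b)\iff M\models d\phi(m)$ for $m\in M$. For two coheirs $q_1,q_2$ disagreeing on $\phi(x,b)$, the finite satisfiability of each $q_i$ in $M$, combined with this definition schema, will translate the disagreement into an incompatibility controlled by $p$. NIP enters twice: first through the classical fact that every globally finitely satisfiable type is automatically $M$-invariant, and second through the finite alternation bound of $\phi$ along any indiscernible sequence. A Morley sequence of the canonical definable extension of $tp(b/M)$ then lets us extract an alternation pattern for $\phi$ that violates NIP, forcing $q_1=q_2$.

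The main obstacle is the coheir uniqueness. The heir side is essentially formal once the definition schema is on the table, but the coheir side really depends on NIP: without it, a definable type can admit many finitely satisfiable extensions corresponding to distinct ultrafilters on externally definable sets, and it is only the collapse of this space under NIP that yields uniqueness. Since the statement is classical, I would lean on the standard NIP toolkit as developed in Simon's book (finite satisfiability implies $M$-invariance, finite alternation along indiscernibles) to pin down the Morley-sequence contradiction at the end of the coheir argument.
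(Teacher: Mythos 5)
The paper does not actually prove this Fact; it is cited verbatim to Simon's book. So the comparison is between your attempt and the standard argument.

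Your heir half is correct and is exactly the standard argument: the definable extension $\bar p$ given by the schema $d_p\phi$ is an heir, and any other heir $q$ that disagrees with $\bar p$ on $\phi(x,b)$ is killed by applying the heir property to $\psi(x,y):=\phi(x,y)\wedge\neg d_p\phi(y)$. Note that, as you observe, this uses only definability of $p$ and no NIP at all; the same argument even shows $p$ has a unique heir over \emph{any} $B\supseteq M$, not just over $\bar M$.

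The coheir half is where the proposal has a genuine gap. What you write is a plan, not a proof: you gesture at ``translating the disagreement into an incompatibility controlled by $p$,'' a Morley sequence of the canonical extension of $tp(b/M)$, and a violation of the alternation bound, but none of these steps is carried out, and it is not clear how they would fit together. Moreover one of the two places you say NIP ``enters'' is mistaken: the implication ``finitely satisfiable in $M$ $\Rightarrow$ $M$-invariant'' holds in every theory and has nothing to do with NIP. In fact, under the stated hypotheses, NIP is not needed for coheir uniqueness either. The clean route is heir--coheir duality: $tp(a/Mb)$ is a coheir of $tp(a/M)$ iff $tp(b/Ma)$ is an heir of $tp(b/M)$. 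If $q_1,q_2$ are coheirs of $p$ disagreeing on $\phi(x,b)$, pick $a_i\models q_i|_{Mb}$. Since $tp(b/M)$ is definable (all types over $M$ are, by hypothesis), it has a unique heir over any set, hence $tp(b/Ma_i)$ is the restriction of that unique heir; writing $d_r\phi$ for the relevant definition, one gets $\models\phi(a_i,b)\iff\models d_r\phi(a_i)$. But $d_r\phi(x)$ is an $L(M)$-formula and $a_1,a_2$ both realize $p$, so $d_r\phi(a_1)$ and $d_r\phi(a_2)$ have the same truth value, contradicting the disagreement. I would replace your coheir paragraph with this duality argument; it is shorter, complete, and makes transparent that definability of all types over $M$ (for arbitrary finite tuples, not just $1$-types) is the hypothesis doing the work.
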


\begin{Definition} \label{Definition_G00}
 Let $G$ be a definable group. Then;
\begin{itemize}
 \item \textbf{$G^{0}_A$} is the intersection of all $A$-definable subgroups of $G$ with finite index in $G$, called the ``connected component (of $G$ over $A$)''. If $G^{0}_A = G^{0}_{\emptyset}$ for all $A$, then we say $G^{0}$ exists and drop the subscript notation.
 \item \textbf{$G^{00}_A$} is the smallest type-definable (with parameters from $A$) subgroup of $G$ of bounded index, sometimes called the ``type-definable connected component (of $G$ over $A$)''. If $G^{00}_A = G^{00}_{\emptyset}$ for all $A$, then we say $G^{00}$ exists and drop the subscript notation. 
 \item \textbf{Fact \cite{Shelah}} If $T$ is an $NIP$ theory and $G$ is definable in some model of $T$, then $G^{00}$ exists. 
\end{itemize}
\end{Definition}

Recall that a valued field $(K,v)$ is Henselian if, for any extension $L$ of $K$, $v$ extends uniquely to a valuation $w$ on $L$. It is well known that $\mathbb{C}((t))$ equipped with the $t$-adic valuation is a Henselian valued field.

\begin{Lemma}[\textbf{Hensel's Lemma}] \label{Lemma_Hensels} \leavevmode
Let $K$ be a Henselian valued field, complete with respect to some valuation $v$. Let $\mathcal{O}_K$ be the valuation ring of $K$ and $k$ the residue field of $K$. Let $f(x) \in \mathcal{O}_K[X]$.

Then the reduction $\bar{f}(x) \in k[x]$ has a simple root (that is, $a_0$ such that $\bar{f}(a_0) = 0$ and $\bar{f'}(a_0) \neq 0$), there exists a unique $a \in \mathcal{O}_K$ such that $f(a) = 0$ and the reduction (residue) $res(a) = a_0$. 
\end{Lemma}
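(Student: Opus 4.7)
The plan is to prove Hensel's Lemma via Newton's method, exploiting the completeness of $K$ with respect to $v$ to obtain convergence of the iterative sequence. First I would fix any lift $b_0 \in \mathcal{O}_K$ of the simple root $a_0 \in k$, i.e.\ with $res(b_0) = a_0$. Since $\bar{f}(a_0) = 0$ we have $f(b_0) \in \mathfrak{M}_K$, i.e.\ $v(f(b_0)) \geq 1$; and since $\bar{f'}(a_0) \neq 0$ we have $v(f'(b_0)) = 0$, so $f'(b_0)$ is a unit in $\mathcal{O}_K$.

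Next I would define the Newton iteration $b_{n+1} = b_n - f(b_n)/f'(b_n)$ and prove by induction on $n$ that $b_n \in \mathcal{O}_K$, that $v(f'(b_n)) = 0$, and crucially that $v(f(b_n)) \geq 2^n v(f(b_0))$. The inductive step uses the formal Taylor expansion
\[
f(b_{n+1}) = f(b_n) + f'(b_n)(b_{n+1} - b_n) + (b_{n+1}-b_n)^2 \cdot h(b_n, b_{n+1}-b_n)
\]
for some $h \in \mathcal{O}_K[X,Y]$; the first two terms cancel by construction of $b_{n+1}$, and the remaining term has valuation at least $2 v(f(b_n)/f'(b_n)) = 2 v(f(b_n))$. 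A similar Taylor expansion for $f'$ shows $v(f'(b_{n+1})) = 0$ is preserved, since $b_{n+1}-b_n$ lies in $\mathfrak{M}_K$.

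Now the sequence $(b_n)$ is Cauchy: $v(b_{n+1}-b_n) = v(f(b_n))$ grows without bound. By completeness of $K$, the sequence converges to some $a \in \mathcal{O}_K$. Continuity of polynomial evaluation with respect to the valuation topology gives $f(a) = \lim f(b_n) = 0$, and $res(a) = res(b_0) = a_0$ because all $b_n$ agree with $b_0$ modulo $\mathfrak{M}_K$.

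For uniqueness, suppose $a' \in \mathcal{O}_K$ also satisfies $f(a') = 0$ and $res(a') = a_0$. Writing $\delta = a' - a \in \mathfrak{M}_K$ and expanding Taylor once more gives $0 = f(a') = f(a) + f'(a)\delta + \delta^2 g(a,\delta) = \delta\bigl(f'(a) + \delta g(a,\delta)\bigr)$ for some $g \in \mathcal{O}_K[X,Y]$. Since $res(f'(a)) = \bar{f'}(a_0) \neq 0$, the bracketed factor is a unit, forcing $\delta = 0$. The main obstacle is simply the bookkeeping in the inductive step showing that the valuation of $f(b_n)$ at least doubles while $f'(b_n)$ remains a unit; once that is in hand, completeness does the rest.
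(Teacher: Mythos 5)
Your proof is correct: it is the standard Newton-iteration proof of Hensel's Lemma, and all the key steps (the quadratic drop in $v(f(b_n))$ from the Taylor expansion, preservation of $v(f'(b_n))=0$ via the ultrametric inequality, Cauchyness from $v(b_{n+1}-b_n) = v(f(b_n)) \to \infty$, convergence by completeness, and the uniqueness argument factoring out $\delta$) are all sound. Note, however, that the paper does not supply a proof of this lemma at all; it is stated as a classical background fact, so there is no ``paper proof'' to compare against. Your argument does rely explicitly on the completeness hypothesis in the statement (rather than deriving Hensel's Lemma from the paper's stated characterization of Henselianity via unique extension of valuations to field extensions); that is a legitimate choice given that completeness is assumed, and it is the most direct route. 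One small cosmetic point: you write $v(f(b_0)) \geq 1$, which implicitly normalizes a discrete value group; for a general value group you should only say $v(f(b_0)) > 0$, but since $2^n v(f(b_0)) \to \infty$ whenever $v(f(b_0)) > 0$ the argument is unaffected, and in the paper's setting ($\mathbb{C}((t))$, value group $\mathbb{Z}$) your normalization is harmless.
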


\begin{Corollary}
Let $K$ be a Henselian valued field, complete with respect to some valuation $v$. Suppose that an element $a \in \mathcal{O}_K$ is in the coset $1 + \mathfrak{M}$. Then $a$ has $n^{th}$ roots for all $n \in \mathbb{N}$.
\end{Corollary}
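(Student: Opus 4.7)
The plan is to apply Hensel's Lemma directly to the polynomial $f(X) = X^n - a \in \mathcal{O}_K[X]$. This is the natural candidate since its roots are precisely the $n$-th roots of $a$, so finding a root of $f$ in $\mathcal{O}_K$ suffices.

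First I would compute the reduction $\bar{f}(X) \in k[X]$. Since $a \in 1 + \mathfrak{M}$ we have $res(a) = 1$, and hence $\bar{f}(X) = X^n - 1$. The element $a_0 = 1 \in k$ is visibly a root of $\bar{f}$, so the first hypothesis of Lemma \ref{Lemma_Hensels} is satisfied.

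Next I would verify that $a_0 = 1$ is a \emph{simple} root. A direct computation gives $\bar{f}'(X) = nX^{n-1}$ and therefore $\bar{f}'(1) = n$. In the ambient setting of this paper the residue field is $\mathbb{C}$, which has characteristic zero, so $n \neq 0$ in $k$ for every positive integer $n$; hence $1$ is a simple root of $\bar{f}$. (More generally, the only subtle point — which is the one obstacle worth flagging — is that for a Henselian valued field of residue characteristic $p > 0$ the argument only delivers $n$-th roots when $p \nmid n$; in the intended application to $\mathbb{C}((t))$ this restriction is vacuous.)

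Finally, applying Lemma \ref{Lemma_Hensels} to $f$ with the simple root $a_0 = 1$ of $\bar{f}$ produces a unique $b \in \mathcal{O}_K$ with $f(b) = 0$ and $res(b) = 1$. By construction $b^n = a$, which gives the required $n$-th root and completes the proof.
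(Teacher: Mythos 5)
Your proof is correct and is the standard argument; the paper in fact states this Corollary without any proof at all, so there is nothing to compare against, but the route you take (apply Hensel's Lemma to $f(X) = X^n - a$, observe $\bar{f}(X) = X^n - 1$ has $1$ as a simple root since $\bar{f}'(1) = n \neq 0$ in characteristic zero) is evidently what the authors intend. You are also right to flag that, as literally stated for an arbitrary Henselian valued field, the Corollary needs the extra hypothesis that the residue characteristic does not divide $n$; this caveat is vacuous for $\mathbb{C}((t))$, which is the only case the paper uses, but it is a real imprecision in the paper's statement that your proof correctly identifies.
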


%SUBSECTION - TOPOLOGICAL DYNAMICS

We now establish some notation and preliminary results from topological dynamics. Let $G$ be a topological group and $X$ a compact (Hausdorff) topological space. We call a continuous action $G \times X \rightarrow X$ a $G$-flow, and write $(G,X)$. We will assume $G$ to be discrete, in which case a $G$-flow can be considered as an action of $G$ by homeomorphisms on a compact space $X$. By a subflow of $(G,X)$ we mean a flow $(G,Y)$ in which $Y \subset X$ is closed under the action of $G$, and further we note that $(G,X)$ will always have minimal and non-empty subflows. 

Since each $g \in G$ induces a homeomorphism of $X$, we consider the set of functions $\pi_g : X \rightarrow X$, where $\pi_g(x) = gx$. Recall the space $X^X$ of maps from $X$ to itself is a compact Hausdorff topological space. Hence we consider the closure of the set $\{ \pi_g \text{ : } g \in G \}$ in $X^X$ equipped with the product topology and obtain the enveloping semigroup $E(X)$, which is a semigroup of homeomorphisms of $X$ under the composition of mappings. Note that $E(X)$ is compact and we can further obtain another flow by considering the action of $G$ on $E(X)$. Ellis \textbf{\cite{Ellis}} proved the following relationships between the ideals of $E(X)$ and minimal subflows. 

\begin{Theorem} \label{Theorem_Ellis}
Let $(G,X)$ be a flow and construct the enveloping semigroup $E(X)$ as above. Let $J$ be the set of idempotents of $E(X)$. Then;
\begin{enumerate}[(i)]
 \item Minimal closed left ideals $I$ of the enveloping semigroup $(E(X), \circ)$ coincide with minimal subflows of $(G,X)$.
 \item If $I$ is a minimal closed left ideal, then $I \cap J \neq \emptyset$, and moreover for any $u \in I \cap J$, $(u \circ I, \circ)$ is a group, often called the ``Ellis Group''.
 \item The Ellis Groups obtained by varying the choice of $I$ or $u$ are isomorphic to each other, and so the Ellis Group is unique up to isomorphism.
\end{enumerate}
\end{Theorem}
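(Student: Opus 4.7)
My plan hinges on the Ellis--Numakura lemma, which I would prove at the outset: every compact right-topological semigroup contains an idempotent. Here $E(X)$ is compact as a closed subset of $X^X$, right multiplication $\rho_\sigma \colon \eta \mapsto \eta \circ \sigma$ is pointwise continuous (since convergence in $X^X$ is pointwise), and $G$ sits densely as $\{\pi_g : g \in G\}$. For Ellis--Numakura, Zorn produces a minimal nonempty closed subsemigroup $T$ of $E(X)$; for any $a \in T$ the set $Ta$ is a closed subsemigroup (closed as $\rho_a(T)$) contained in $T$, forcing $Ta = T$; pick $b \in T$ with $ba = a$; then $\{t \in T : ta = a\}$ is a closed subsemigroup of $T$ containing $b$, hence equals $T$ by minimality, yielding $a^2 = a$.

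For (i), I would identify the closed left ideals of $E(X)$ with the closed $G$-invariant subsets. The forward direction is immediate from $G \subseteq E(X)$. Conversely, given $I$ closed and $G$-invariant and any $\sigma \in E(X)$ written as a pointwise limit $\sigma = \lim \pi_{g_i}$, for $\eta \in I$ the identity $(\sigma\eta)(x) = \sigma(\eta(x)) = \lim \pi_{g_i}(\eta(x)) = \lim (\pi_{g_i}\eta)(x)$ presents $\sigma\eta$ as a pointwise limit of elements of $I$, whence $\sigma\eta \in I$. Consequently minimal closed left ideals of $E(X)$ are the minimal subflows of $(G, E(X))$, and these correspond to minimal subflows of $(G,X)$ via the continuous $G$-equivariant evaluations $\mathrm{ev}_x \colon E(X) \to X$, $\eta \mapsto \eta(x)$.

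For (ii), the minimal left ideal $I$ is itself a compact right-topological semigroup, since $I \cdot I \subseteq E(X) \cdot I \subseteq I$, so Ellis--Numakura yields an idempotent $u \in I \cap J$. To recover the group structure on $uI$: the identity $u(u\sigma) = u\sigma$ shows $u$ is a left identity. For a left inverse of $u\sigma \in uI$, the set $I u \sigma$ is closed (as $\rho_{u\sigma}(I)$) and a left ideal ($E(X) \cdot I u\sigma \subseteq Iu\sigma$) contained in $I$, hence equals $I$ by minimality, so some $\tau \in I$ satisfies $\tau u \sigma = u$, and $u\tau \in uI$ with $(u\tau)(u\sigma) = u$. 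A standard three-line argument upgrades ``semigroup with left identity and left inverses'' to ``group''.

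For (iii), given minimal left ideals $I_1, I_2$ with idempotents $u_1, u_2$, I would first observe $I_j = E(X) u_j$ (each is a closed left ideal contained in $I_j$ and containing $u_j$), and then that $I_1 u_2$ and $I_2 u_1$ are closed left ideals inside $I_2$ and $I_1$ respectively, hence equal them by minimality. I would then build $\phi \colon u_1 I_1 \to u_2 I_2$ by combining right multiplication by $u_2$ (which maps $I_1$ onto $I_2$) with the two-sided absorption identities $u_j x = x u_j = x$ for $x \in u_j I_j$ from (ii), and construct a symmetric inverse $\psi$. The main obstacle is not topological but algebraic: the natural-looking formulas ($\phi(x) = u_2 x$, $\phi(x) = x u_2$, $\phi(x) = u_2 x u_2$) each need their image, multiplicativity, and inversion checked using the absorption identities without conflating the two idempotents, and the formula that works cleanly in both roles has to be pinned down with care. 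Once $\phi$ and $\psi$ are established as mutually inverse semigroup homomorphisms, the uniqueness of the Ellis group up to isomorphism follows immediately.
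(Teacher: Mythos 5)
A preliminary remark on the comparison: the paper does not prove this theorem at all --- it is quoted with a citation to Ellis (and Auslander) --- so there is no internal argument to measure you against; what you have written is a reconstruction of the standard proof from those sources. Your parts (i) and (ii) are essentially complete and correct: the Ellis--Numakura argument (Zorn plus $Ta=T$ plus the closed subsemigroup $\{t\in T: ta=a\}$) is right, the identification of closed left ideals of $E(X)$ with closed $G$-invariant subsets via pointwise limits is right, and the left-identity/left-inverse argument on $u\circ I$ is the standard one. One small imprecision in (i): what your argument actually shows is that minimal closed left ideals are the minimal subflows of $(G,E(X))$; they only correspond to minimal subflows of $(G,X)$ through the evaluation maps $\mathrm{ev}_x$ (the image of a minimal flow under a continuous equivariant map is minimal, and every minimal $Y\subseteq X$ arises as $I\cdot y$ for $y\in Y$). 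In the paper's application this looseness is harmless, since $E(S_G(M))\cong S_G(M)$ when all types over $M$ are definable, but your write-up should state which of the two assertions it proves.

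The genuine gap is in (iii), and you have half-diagnosed it yourself. For arbitrary idempotents $u_1\in I_1$, $u_2\in I_2$ none of your candidate formulas need work: $u_2x$ lands back in $I_1$ (not $I_2$), and $x\mapsto xu_2$ or $x\mapsto u_2xu_2$ is multiplicative only when $u_2u_1=u_1$, i.e.\ only when the idempotents are \emph{equivalent}. The missing ingredient is the equivalent-idempotent lemma: since $I_2u_1$ is a closed left ideal contained in $I_1$, minimality gives $I_2u_1=I_1$, so $A=\{q\in I_2 : q u_1=u_1\}$ is a nonempty closed subsemigroup of $I_2$; Ellis--Numakura gives an idempotent $v'\in A$, and $v=u_1v'$ is an idempotent of $I_2$ satisfying $vu_1=u_1$ and $u_1v=v$. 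With this $v$, the map $x\mapsto xv$ from $u_1I_1$ to $vI_2$, with inverse $y\mapsto yu_1$, is a group isomorphism; the verification uses the right-identity fact that $pu=p$ for every $p\in I$ and idempotent $u\in I$, which follows from $I=E(X)u$ (a fact you do state). Finally, to reach your originally chosen $u_2$, compare two idempotents $v,u_2$ inside the same ideal $I_2$ via $x\mapsto u_2x$ with inverse $y\mapsto vy$, which is the same computation. Without the equivalent-idempotent step your (iii) does not close; with it, your outline becomes exactly the classical proof cited by the paper.
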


We consider the above with some definable group $G$ and $S_G(M)$ the space of complete types concentrating on $G$. Then $(G(M),S_G(M))$ is a $G(M)$-flow with the action $* : G(M) \times S_G(M) \rightarrow S_G(M)$ given by $g * p = tp(g \cdot a / M)$ where $a$ realises $p$ over $M$ and $\cdot$ is the binary operation of $G$.

From this point onwards, we assume that $M$ a model of some $NIP$ theory.

In \textbf{\cite{Newelski2009}} Newelski shows that $E(S_{G,ext}(M))$ is isomorphic to $S_{G,ext}(M)$ as a $G(M)$-flow, where the action on $S_{G,ext}(M)$ is given by, for $p$, $q$ types in $S_{G,ext}(M)$, $p * q = tp(a \cdot b/M)$ where $a$ realizes $p$, $b$ realizes the unique heir of $q$ over $(M,a)$. Note that $S_{G,ext}(M)$ is homeomorphic to $S_G(M^{ext})$, and moreover if all types are definable over $M$, then $S_G(M^{ext}) = S_G(M)$.

In \textbf{\cite{PSC}} it is shown that many properties in $M^{ext}$ coincide with those in $M$. From this, we find that the semigroups $(E(S_G(M)), \circ)$ and $(S_G(M), *)$ are isomorphic. Hence, if all types over $M$ are definable, $(G,E(S_{G,ext}(M)))$ coincides with $(G,S_G(M))$. 

See for \textbf{\cite{SBook}} for a general but thorough reference for the following facts and definitions. 

\begin{Definition} \label{Definition_DefAmen}
Let $G$ be a group definable in a model $M$. A \textbf{Keisler Measure} $\mu$ (over $M$) is a finitely additive probability measure on formulas $\phi(x,m)$, for $m \in M$. 

Then $G$ is \textbf{Definably Amenable} if $G$ admits a global Keisler measure $\mu$ on the definable subsets of $G$ which is invariant under left (right) translation by elements of $G(\mathcal{U})$, where $\mathcal{U}$ is the global (monster) model. 
\end{Definition}

\begin{Definition} \label{Definition_fGeneric} 
A global type $p \in S_G(\mathcal{U})$ is left (right) $f$-generic over $A$ if no left (right) translate of $p$ forks over $A$. 
\end{Definition}

\begin{Fact} \textbf{\cite{SBook}} \label{Fact_DefAmenStab}
Let $M$ be a model and let $G$ be a group definable in $M$. Let $p \in S_G(\mathcal{U})$ be a global type.
\begin{enumerate}[(i)]
 \item If $G$ admits a global left invariant type, then $G$ is definable amenable.
 \item $p$ is $f$-generic if and only if $Stab(p) = G^{00}$.
 \item If $p$ is left (right) $f$-generic then $p$ is $G^{00}$-invariant. 
\end{enumerate}
\end{Fact}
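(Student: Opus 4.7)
This fact is standard and cited from Simon's book; here is how I would reconstruct the three arguments. For (i), I would produce a left-invariant Keisler measure directly from $p$: define $\mu(\phi) := 1$ if $\phi \in p$ and $0$ otherwise. Completeness of $p$ gives finite additivity, since on a disjoint disjunction $\phi \equiv \psi \vee \chi$ exactly one of $\psi, \chi$ lies in $p$. Left invariance of $\mu$ follows verbatim from that of $p$, so $G$ is definably amenable.

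For (iii), I would route the proof through boundedness of the $G$-orbit of $p$. If $p$ is $f$-generic over some small model $A$, then each left translate $gp$ is also $f$-generic and thus does not fork over $A$; by NIP, the number of global non-forking extensions of any type over $A$ is bounded, so the orbit $\{gp : g \in G\}$ is bounded and $Stab(p)$ has bounded index in $G$. The subtle step is showing $Stab(p)$ is type-definable: I would invoke Borel-definability of invariant types in NIP, expressing the condition ``$\phi \in gp$'' as a Borel combination of definable conditions on $g$, so that $Stab(p) = \bigcap_{\phi} \{g : \phi \in p \leftrightarrow \phi \in gp\}$ emerges as a type-definable bounded-index subgroup; by minimality of $G^{00}$ we conclude $G^{00} \subseteq Stab(p)$, which gives $G^{00}$-invariance of $p$.

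For (ii), the forward implication follows from (iii) together with a converse that for $f$-generics the stabilizer coincides with $G^{00}$ — proved via a minimality argument on type-definable bounded-index subgroups combined with the characterization of $f$-genericity as having a bounded orbit. The backward implication observes that if $Stab(p) = G^{00}$ then $p$ has at most $|G/G^{00}|$ left translates, a bounded cardinal, and in NIP any type whose $G$-orbit is bounded over a small model $A$ cannot have a forking translate, giving $f$-genericity. The main obstacle throughout is the type-definability step in (iii): without NIP, bounded-index subgroups of $G$ need not be type-definable at all, and the Borel-definability machinery for NIP invariant types is what makes the stabilizer accessible as a type-definable object.
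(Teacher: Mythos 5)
The paper states this as a Fact cited to \cite{SBook} and gives no proof, so there is no internal argument to compare against; what follows is an assessment of your reconstruction. Your argument for (i) is correct and is the standard one: the $\{0,1\}$-valued measure concentrated at a left-invariant global type is a left-invariant Keisler measure, with finite additivity coming from completeness of $p$.

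For (iii), the weak link is the claim that Borel-definability yields type-definability of $Stab(p)$. Borel-definability says that for an $M$-invariant type in an NIP theory the set $\{b : \phi(x,b) \in p\}$ is a finite Boolean combination of $M$-type-definable sets, i.e.\ a Borel set, not a type-definable one, and an intersection over all $\phi$ of such sets need not be type-definable. The usual route instead observes that an $f$-generic $p$ is $M$-invariant for some small model $M$ (non-forking over a model equals $M$-invariance under NIP), so $Stab(p)$ is an $\mathrm{Aut}(\mathcal{U}/M)$-invariant subgroup of bounded index and therefore contains $G^{000}_M$; one then cites the theorem that $G^{000} = G^{00}$ for definably amenable NIP groups to get $G^{00} \subseteq Stab(p)$. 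For (ii), minimality of $G^{00}$ among type-definable bounded-index subgroups gives only $G^{00} \subseteq Stab(p)$, while the forward direction requires the reverse containment $Stab(p) \subseteq G^{00}$, which you do not address; and the backward direction (bounded orbit implies $f$-generic) is a genuine theorem requiring definable amenability, not an immediate consequence of the orbit having bounded cardinality, since boundedness of the orbit does not by itself force each translate to be non-forking over $M$. The skeleton of your reconstruction is right, but the load-bearing steps rely on $G^{000}$ and definable amenability rather than on type-definability of the stabilizer via Borel-definability.
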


By $G(M) * p$ we mean the set $\{ g * p \text{ : } \forall g \in G(M) \}$. For $X$ a topological space, the closure of $X$, denoted $cl(X)$, is the set of all limit points of $X$. 

\begin{Definition}
 A type $p \in S_G(M)$ is said to be \textbf{almost periodic} if $cl(G(M) * p)$ is a minimal subflow of $(G(M),S_G(M))$. Equivalently, $p \in S_G(M)$ is \textbf{almost periodic} if $p$ is in some minimal subflow of $(G(M),S_G(M))$. 
\end{Definition}

\begin{Fact} \label{Fact_Gorbit} 
Let $G$ be a definable group, let $p$ be a global type in $S_G(\bar{M})$, and let $G(\bar{M})$ be the interpretation of $G$ in $\bar{M}$. Then;
\begin{itemize}
 \item \textbf{\cite{Aus}} $cl(G(\bar{M}) * p)$ is $G(\bar{M})$-invariant.
 \item \textbf{\cite{Aus}} Let $X \subset S_G(\bar{M})$. $X$ is minimal if and only if $cl(G(\bar{M}) * p) = X$ for all $p \in X$. That is, a set is minimal exactly when it is the orbit closure of each of its points.
 \item $cl(G(\bar{M}) * p ) = S_G(\bar{M}) * p$.
 \item \textbf{\cite{2015APNY}} If $p$ is a global $f$-generic type, then $p$ is almost periodic and further $cl(G(\bar{M}) * p) = G(\bar{M}) * p$.  
\end{itemize}
\end{Fact}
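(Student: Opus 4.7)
The plan is to lean on Newelski's identification from \cite{Newelski2009} of the enveloping semigroup $E(S_G(\bar{M}))$ with the semigroup $(S_G(\bar{M}), *)$ as a $G(\bar{M})$-flow. Under this identification, the orbit $G(\bar{M}) * p$ is precisely the image of the dense subset $\{\pi_g : g \in G(\bar{M})\} \subseteq E$ evaluated at $p$, while the extended action of $S_G(\bar{M})$ on itself realises the enveloping semigroup acting on its phase space. Items (i) and (ii) are then essentially formal. For (i), continuity of the action together with $G(\bar{M})$ being a group gives $h * cl(G(\bar{M}) * p) = cl(hG(\bar{M}) * p) = cl(G(\bar{M}) * p)$ for any $h$. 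For (ii), Zorn's Lemma produces minimal closed invariant subsets inside any closed invariant set, and minimality of such an $X$ is equivalent to the orbit-closure condition $cl(G(\bar{M}) * q) = X$ for every $q \in X$; this is the criterion cited from \cite{Aus}.

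For item (iii), I would use that the evaluation map $\lambda_p : E(S_G(\bar{M})) \to S_G(\bar{M})$ given by $\eta \mapsto \eta(p)$ is continuous in the product topology on $E$. Since $E$ is compact and $\{\pi_g : g \in G(\bar{M})\}$ is dense in $E$ by construction of the enveloping semigroup, continuity yields $\lambda_p(E) = cl(\lambda_p(\{\pi_g\})) = cl(G(\bar{M}) * p)$. Transporting $\lambda_p(E)$ across Newelski's isomorphism identifies it with $S_G(\bar{M}) * p$, giving the claimed equality.

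Item (iv) is where the real work lies, and I expect the main obstacle to be showing that an $f$-generic orbit is already closed, as opposed to merely having a minimal closure. By Fact \ref{Fact_DefAmenStab}(ii), $f$-genericity of $p$ is equivalent to $Stab(p) = G^{00}$, so the translation map $g \mapsto g * p$ factors through the bounded quotient $G(\bar{M})/G^{00}$. Following \cite{2015APNY}, one checks that the induced map from $G(\bar{M})/G^{00}$, equipped with its logic topology making it into a compact Hausdorff group, into $S_G(\bar{M})$ is continuous. Its image $G(\bar{M}) * p$ is therefore compact, hence closed in $S_G(\bar{M})$. Almost periodicity of $p$ then follows from (ii): every $q \in G(\bar{M}) * p$ has the form $g * p$ for some $g \in G(\bar{M})$, so $cl(G(\bar{M}) * q) = G(\bar{M}) * q = G(\bar{M}) * p$, witnessing minimality.
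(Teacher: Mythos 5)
The paper records this as a \emph{Fact} with external citations (\cite{Aus} for the first two bullets, \cite{2015APNY} for the last) and offers no proof of its own, so there is no internal argument to compare yours against. Judged on its own merits, your sketch for items (i)--(iii) is correct and is the standard argument: (i) and (ii) are pure topological dynamics; for (iii), the key observation that evaluation $\lambda_p:\eta\mapsto\eta(p)$ is a continuous (hence closed) map from the compact enveloping semigroup onto $cl(G(\bar M)*p)$, combined with Newelski's identification of $E(S_G)$ with $(S_G,*)$, is exactly the right route. One caveat worth stating explicitly is that the identification $E(S_G(\bar M))\cong(S_G(\bar M),*)$ needs the $*$-product to be well defined, i.e.\ unique heirs, which in the NIP setting is furnished by passing to the Shelah expansion $M^{ext}$ or by assuming all types over $\bar M$ are definable; the paper records this hypothesis immediately before the Fact, so it is harmless but should be invoked.

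For (iv) there is a genuine gap in the step you flag as the hard one. You reduce the claim to continuity of the orbit map $G(\bar M)/G^{00}\to S_G(\bar M)$, $gG^{00}\mapsto g*p$, with the logic topology on the source. Unwinding definitions, continuity at a basic clopen $[\phi]$ amounts to showing that $D_\phi=\{g\in G(\bar M):\phi\in g*p\}$ is type-definable and co-type-definable once pushed to the quotient. The $G^{00}$-invariance of $p$ (Fact~\ref{Fact_DefAmenStab}(iii)) gives $D_\phi G^{00}=D_\phi$, so the quotient issue disappears, but you still need $D_\phi$ itself to be (at least externally) definable. That is automatic when $p$ is a definable type, but an $f$-generic type in NIP is in general only $M$-invariant, not definable, so $D_\phi$ is a priori just an arbitrary subset of $G(\bar M)$. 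The fix, which is what \cite{2015APNY} and Chernikov--Simon actually use, is the NIP machinery of honest definitions (equivalently, passing to $M^{ext}$) to get that the ``$\phi$-definition'' of an invariant type is externally definable, and then working with $S_{G,ext}$. In the specific situation of this paper this is again benign because all types over $\mathbb{C}((t))$ are definable, but as a proof of the Fact in the stated NIP generality, the continuity claim needs that extra ingredient and should not be left as ``one checks''.
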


\section{A Decomposition for $SL_2(\mathbb{C}((t)))$.}

From this point on, we will fix $M = \mathbb{C}((t))$ an $L$-structure, where $L$ is as in Fact \ref{Fact_Language}. We will use $M \prec \bar{M}$, with the domain of $\bar{M}$ denoted by $\mathbb{K}$. We will make no distinction between $M$ and $\mathbb{C}((t))$ or between $\bar{M}$ and $\mathbb{K}$. We will often use $\Gamma$ to mean the value group of $\mathbb{K}$.

\begin{Fact} \label{Fact_SL2C((t))NotDefAmen}
$SL_2(\mathbb{C}((t)))$ is not definably amenable.
\end{Fact}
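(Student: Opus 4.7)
The strategy is to assume for contradiction that $SL_2(M)$ is definably amenable, transfer the resulting invariant Keisler measure to the transitive Möbius action of $G$ on $\mathbb{P}^1$, and exhibit an incompatibility between the translation and Weyl reflection actions on the valuation ring. Let $\mu$ be a global left-$G(\bar{M})$-invariant Keisler measure on $G(\bar{M})$. Since the orbit map $g \mapsto g \cdot \infty$ is definable, the pushforward $\nu(D) := \mu(\{g \in G : g \cdot \infty \in D\})$ is a $G(\bar{M})$-invariant Keisler probability measure on $\mathbb{P}^1(\bar{M}) = \bar{M} \cup \{\infty\}$; $G$-invariance follows from left-invariance of $\mu$ via the substitution $k = h^{-1} g$.

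First I would show that the valuation ring $\mathcal{O} = \{z \in \bar{M} : v(z) \geq 0\}$ satisfies $\nu(\mathcal{O}) = 0$. The unipotent subgroup of matrices $\begin{pmatrix} 1 & a \\ 0 & 1 \end{pmatrix}$ fixes $\infty$ and acts on $\bar{M}$ by translation $z \mapsto z + a$, so $\nu(\mathcal{O} + a) = \nu(\mathcal{O})$ for every $a \in \bar{M}$. The translates $\mathcal{O} + t^{-i}$ for $i = 1, \dots, N+1$ are pairwise disjoint (for $i < j$, $v(t^{-i} - t^{-j}) = -j < 0$, so the difference lies outside $\mathcal{O}$), so finite additivity forces $(N+1)\nu(\mathcal{O}) \leq \nu(\mathbb{P}^1) = 1$, giving $\nu(\mathcal{O}) = 0$ on letting $N$ grow.

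Then I would apply the Weyl element $w = \begin{pmatrix} 0 & -1 \\ 1 & 0 \end{pmatrix} \in G(M)$, acting by $z \mapsto -1/z$. A direct computation gives $w \cdot \mathcal{O} = \{y \in \bar{M} : v(y) \leq 0\} \cup \{\infty\}$, so $\nu(w \cdot \mathcal{O}) = \nu(\mathcal{O}) = 0$ by $G$-invariance of $\nu$. But $\mathcal{O} \cup (w \cdot \mathcal{O}) = \mathbb{P}^1$, and finite additivity then yields the contradiction $1 = \nu(\mathbb{P}^1) \leq \nu(\mathcal{O}) + \nu(w \cdot \mathcal{O}) = 0$. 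The only point requiring care is that the pushforward $\nu$ is genuinely a Keisler measure on $\mathbb{P}^1$, which reduces to checking that preimages under the (rational, hence definable) orbit map are definable in $G$; beyond this, the argument is a short finite-additivity computation, and I expect no substantive obstacle.
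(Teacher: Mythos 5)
Your proof is correct, but it takes a genuinely different route from the paper's. The paper argues through types and connected components: it first observes that $SL_2(\mathbb{C}((t)))^{00} = SL_2(\mathbb{C}((t)))$ (from simplicity of $PSL_2$), then shows that any global $f$-generic type $p$ — which would have to exist under definable amenability and be $G^{00} = G$-invariant — would decide the coset of $\mathbb{K}^{*0}$ containing the entry $x_{1,1}$, and that translating by a diagonal matrix $\mathrm{diag}(a, a^{-1})$ with $a \notin \mathbb{K}^{*0}$ moves that coset, contradicting invariance. Your argument instead pushes a putative invariant Keisler measure forward along the orbit map $g \mapsto g\cdot\infty$ to $\mathbb{P}^1$ and exhibits a paradoxical configuration: disjoint translates $\mathcal{O} + t^{-i}$ force $\nu(\mathcal{O}) = 0$, while the Weyl element turns $\mathcal{O}$ into a complementary half and $\mathcal{O} \cup w\cdot\mathcal{O} = \mathbb{P}^1$ gives $1 \leq 0$. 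Your approach is more elementary and more robust: it uses neither NIP, nor Shelah's existence of $G^{00}$, nor the specific coset structure of the $P_n$-predicates on $\mathbb{K}^*$ — it only needs a definable valuation with unbounded value group, and the translating elements $t^{-i}$ and $w$ all lie in $G(M)$, so even the weaker ``$G(M)$-invariant'' version of definable amenability suffices. The paper's proof, by contrast, is shorter once one grants the $f$-generic machinery of Fact~2.11, and it usefully foregrounds the $\mathbb{K}^*/\mathbb{K}^{*0}$ coset analysis that drives the rest of the paper's computation of the minimal flow and the Ellis group. One cosmetic point worth tidying in your writeup: $\mathbb{P}^1$ is an imaginary/interpretable sort rather than a definable subset of affine space, so either pass to $T^{\mathrm{eq}}$ or work directly with the affine chart $\bar{M}$ and account for $\nu(\{\infty\}) = \mu(B)$ separately; this changes nothing in substance.
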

\begin{proof}[\textbf{Proof}]
First, it is easy to check that $SL_2(\mathbb{C}((t))) = SL_2(\mathbb{C}((t)))^{00}$, using the fact that $SL_2(\mathbb{C}((t)))$ is simple. Assume for contradiction that $SL_2(\mathbb{C}((t)))$ is definably amenable. Then there is a global left $SL_2(\mathbb{C}((t)))^{00}$-invariant, and hence $SL_2(\mathbb{C}((t)))$-invariant, type $p(x)$. 

Let $x_{1,1}$ be the top left entry of a $2 \times 2$ matrix. Then $p(x) \vdash x_{1,1} \in C_i$ for some coset $C_i$ of $\mathbb{K}^{*0} = \bigcap\limits_n P_n(x)$.

Consider then translation $g$ of a realisation of $p(x)$, where $g = \begin{pmatrix} a & 0 \\ 0 & a^{-1} \end{pmatrix}$ for some $a \in \mathbb{C}((t))$. Then $gp(x) \vdash ax_{1,1} \in C_i$ if and only if $a$ is in the identity coset $\mathbb{K}^{*0}$. Clearly since $\mathbb{C}((t))$ is not algebraically closed we can easily find some suitable $a \notin \mathbb{K}^{*0}$ and see $gp(x) \neq p(x)$.

For completeness sake, if the $x_{1,1}$ entry of a realisation of $p(x)$ is $0$, since the determinant of the realisation is $1$, we see that the top right entry $x_{1,2} \neq 0$ and the above argument follows for the same $g$. Since $p$ is not $G^{00}$-invariant then $p$ is not an $f$-generic type and hence $SL_2(\mathbb{C}((t)))$ is not definably amenable by Fact \ref{Fact_DefAmenStab}.
\end{proof}

Recall that for a group $G$ a definable group, we denote by $G(M)$ the intepretation of $G$ in $M$.

Let $$H(M) = \left \{ \begin{pmatrix} 1 & 0 \\ \alpha & 1 \end{pmatrix} \text{ : } \alpha \in M \right \}$$ and let  $$B(M) = \left \{ \begin{pmatrix} \beta & \gamma \\ 0 & \beta^{-1} \end{pmatrix} \text{ : } \beta \in M^* \text{ and } \gamma \in M \right \}.$$ 

If we have some $L$-structure $\bar{M} = \mathbb{K}$, then $H(\bar{M})$ is isomorphic to $(\mathbb{K}, +)$. Similarly, $B(\bar{M})$ is isomorphic to a semidirect product of $(\mathbb{K}^*, \times)$ and $(\mathbb{K}, +)$. Both of these isomorphisms are definable.

Finally, we consider the subgroup $$\left \{ \begin{pmatrix} 1 & 0 \\ 0 & 1 \end{pmatrix} , \begin{pmatrix} -1 & 0 \\ 0 & -1 \end{pmatrix} , \begin{pmatrix} 0 & 1 \\ -1 & 0 \end{pmatrix} , \begin{pmatrix} 0 & -1 \\ 1 & 0 \end{pmatrix} \right \}.$$ We note that this is isomorphic to the cyclic group $\mathbb{Z}/4\mathbb{Z}$ and will use $\mathbb{Z}/4\mathbb{Z}$ to denote the above subgroup. 

\begin{Proposition} \label{Proposition_C((t))Decomp}
Every element of $G(M) = SL_2(\mathbb{C}((t)))$ can be expressed as a product of elements from $\mathbb{Z}/4\mathbb{Z}$, $H(M)$ and $B(M)$.
\end{Proposition}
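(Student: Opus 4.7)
The plan is to carry out a one-step Bruhat-type decomposition by Gaussian row reduction, using the cyclic group $\mathbb{Z}/4\mathbb{Z}$ only to supply a row swap when needed. For an arbitrary $g = \begin{pmatrix} a & b \\ c & d \end{pmatrix} \in G(M)$, I would split into two cases according to whether the pivot $a$ vanishes.

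First I would dispose of the generic case $a \neq 0$. Setting $h := \begin{pmatrix} 1 & 0 \\ c/a & 1 \end{pmatrix} \in H(M)$ and computing $h^{-1} g$, the bottom-left entry clears, and the relation $\det(g) = ad - bc = 1$ forces the bottom-right entry to equal $a^{-1}$. Hence $h^{-1} g \in B(M)$ and $g = h \cdot (h^{-1} g)$ is a product of an $H(M)$-element and a $B(M)$-element (pre-composed with the identity of $\mathbb{Z}/4\mathbb{Z}$ if one wishes to display all three factors explicitly).

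Next I would reduce the degenerate case $a = 0$ to the one above by a row swap. In this case the determinant condition forces $bc = -1$, so in particular $c \neq 0$. Pre-multiplying by the Weyl-type element $w := \begin{pmatrix} 0 & -1 \\ 1 & 0 \end{pmatrix} \in \mathbb{Z}/4\mathbb{Z}$ produces $wg = \begin{pmatrix} -c & -d \\ 0 & b \end{pmatrix}$, which lies in $B(M)$ because $(-c) \cdot b = -bc = 1$. This yields $g = w^{-1} \cdot (wg)$, with a trivial $H(M)$-factor inserted.

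There is no substantive obstacle: the argument is pure linear algebra over $M$, and the role of $\mathbb{Z}/4\mathbb{Z}$ is precisely to swap rows when direct elimination via $H(M)$ fails because the pivot $a$ vanishes. All that is needed is to verify in each case that the resulting matrices actually lie in the claimed subsets, which is immediate from $\det(g) = 1$.
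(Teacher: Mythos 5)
Your proposal is correct and follows essentially the same route as the paper: split on whether the top-left entry vanishes, absorb the lower-triangular part into an $H(M)$-factor when it doesn't, and pre-compose with a Weyl-type element of $\mathbb{Z}/4\mathbb{Z}$ (a row swap) when it does, with the determinant condition $ad-bc=1$ pinning down the $B(M)$-factor. The only cosmetic difference is that you present it as Gaussian elimination (left-multiplying by $h^{-1}$ or $w$ to reach $B(M)$) whereas the paper verifies the product $z\,h\,t=g$ directly, but the choices of $\alpha,\beta,\gamma,z$ coincide.
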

\begin{proof}[\textbf{Proof}]
Let $g = \begin{pmatrix} x_1 & x_2 \\ x_3 & x_3 \end{pmatrix}$ be an arbitrary matrix in $G(M)$. Assume that $x_1 \neq 0$ and let $\beta = x_1 \neq 0$, $\gamma = x_2$, $\alpha = x_3(x^{-1}_1)$. Then;
\begin{center}
$ \begin{pmatrix} 1 & 0 \\ 0 & 1 \end{pmatrix}
  \begin{pmatrix} 1 & 0 \\ \alpha & 1 \end{pmatrix} \begin{pmatrix} \beta & \gamma \\ 0 & \beta^{-1} \end{pmatrix} = \begin{pmatrix} \beta & \gamma \\ \beta\alpha & \beta^{-1} + \alpha\gamma \end{pmatrix} = \begin{pmatrix} x_1 & x_2 \\ x_3 & x_4 \end{pmatrix}$ 
\end{center}

It remains to show that we can obtain matrices where $x_1 = 0$. Choose $z \in \mathbb{Z}/4\mathbb{Z}$ to be the matrix $\begin{pmatrix} 0 & -1 \\ 1 & 0 \end{pmatrix}$, and assume $x_1=0$. Let $\alpha = 0$, $\beta = x_3$ and $\gamma = x_4$. Then;

\begin{center}
$ \begin{pmatrix} 0 & -1 \\ 1 & 0 \end{pmatrix} \begin{pmatrix} 1 & 0 \\ \alpha & 1 \end{pmatrix} \begin{pmatrix} \beta & \gamma \\ 0 & \beta^{-1} \end{pmatrix} = \begin{pmatrix} -\alpha\beta & -(\beta^{-1} + \alpha\gamma) \\ \beta & \gamma \end{pmatrix} = \begin{pmatrix} x_1 & x_2 \\ x_3 & x_4 \end{pmatrix}$ 
\end{center}

Hence for any given arbitrary matrix we can solve for some element $z$, together with a choice of $\alpha$, $\beta$ and $\gamma$ and decompose that matrix as above.
\end{proof}

Note that there are multiple ways (at most $4$) to decompose an element of $G(M)$ in this way. In \textbf{\cite{SL2QP}}, they use an Iwasawa-like decomposition of $SL_2(\mathbb{Q}_p)$ and make use of the fact that $SL_2(\mathbb{Z}_p)$ is compact. As $SL_2(\mathbb{C}[[t]])$ is not even locally compact, we could not adopt a similar approach using the Iwasawa-like decomposition of $SL_2(\mathbb{C}((t)))$. 

\section{Minimal Flows of $\mathbb{G}_a$, $\mathbb{G}_m$, and Borel Subgroups.}

Recall the language used in Fact \ref{Fact_Language}, and that $P_n(x) \iff \exists y ( y^n = x )$. Then $P_n(x)$ and $x \neq 0$ determine a finite index subgroup of $\mathbb{K}^*$. It is clear that the type $\bigwedge\limits_{n} P_n(x)$ determines the connected component $\mathbb{K}^{*0}$ of the multiplicative group $(\mathbb{K}^*, \times)$. We will use $C_i$ to denote an arbitrary coset of the connected component $\mathbb{K}^{*0}$, with $C_0$ denoting the identity coset $\mathbb{K}^{*0}$ itself.          

\begin{Lemma} \label{Lemma_1TypesC((t))}
 The complete 1-types over $M = (\mathbb{C}((t)), +, \times)$ are precisely the following;
\begin{enumerate}[(a)]
 \item The (realized) types $tp(a/M)$ for each $a \in \mathbb{C}((t))$.
 \item For each $a \in \mathbb{C}((t))$ and coset $C$ of $\mathbb{K}^{*0}$, the type $p_{a,C}$ determined by \\ $\{ v(x-a) > n \text{ : } \forall n \in \mathbb{N} \}$ and $(x - a) \in C$.
 \item For each coset $C$ of $\mathbb{K}^{*0}$, the type $p_{ \infty, C}$ determined by \\
$\{ v(x) < n \text{ : } \forall n \in \mathbb{Z} \}$ and $x \in C$.
 \item For each $a \in \mathbb{C}[t]$, the type $p_{a,n,trans}$ determined by the formulas \\ $ v(x - a) = n $, $deg(a) < n$ for some $n \in \mathbb{Z}$ and \\ $\{ f(res((x-a)t^{-n})) \neq 0 \text{ : } f \in (\mathbb{C})[x] \}$. \\ If $a = 0$ then, we can drop the $deg(a) < n$ from the description.
\end{enumerate}
\end{Lemma}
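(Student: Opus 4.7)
The plan is to leverage the quantifier elimination of Fact \ref{Fact_Language} to reduce the classification of complete $1$-types over $M$ to three pieces of data: (i) whether the type is realised, (ii) the valuation cut $a \mapsto v(x-a)$ as $a$ ranges over $M$, and (iii) the multiplicative coset of $x - a$ modulo $\mathbb{K}^{*0}$, which encodes all the information supplied by the $P_n$ predicates. Since the atomic formulas of the language in one free variable are polynomial equations, divisibility $f(x) \mid g(x)$, $N(f(x))$, and $P_n(f(x))$ for $f, g \in M[x]$, it suffices to show each such formula is decided by the data above and then to match each consistent combination to one of the cases listed.

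If the type is realised we are immediately in case (a), so suppose $p$ is non-realised. The ultrametric triangle inequality forces $v(x - a) = \min(v(x - a_0), v(a_0 - a))$ whenever the two arguments of $\min$ differ, so the map $a \mapsto v(x - a)$ is determined by its supremum $\gamma \in \Gamma$ together with any basepoint realising this supremum. The case analysis then splits on the behaviour of $\gamma$. If some $a \in M$ witnesses $v(x - a) > n$ for every $n \in \mathbb{N}$, we are in case (b). If the supremum is uniformly bounded by a standard integer but $v(x) < n$ for every $n \in \mathbb{Z}$, we are in case (c). Otherwise the supremum equals a standard $n \in \mathbb{Z}$, attained at some $a \in M$; here I would show $res((x - a)t^{-n})$ must be transcendental over $\mathbb{C}$, for otherwise setting $c := res((x - a)t^{-n}) \in \mathbb{C}$ yields $v(x - (a + ct^n)) > n$, contradicting maximality of $\gamma$. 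Absorbing the terms of $a$ of degree below $n$ into a polynomial representative in $\mathbb{C}[t]$ with degree $<n$ then realises case (d).

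The main obstacle, and the heart of the proof, is to show that the formulas listed in each case determine a \emph{complete} type, i.e. that they decide every $P_n(f(x))$ and divisibility atomic formula. For this I would combine Hensel's Lemma and its Corollary (which gives that every unit in $1 + \mathfrak{M}$ is an $n$-th power for all $n$) with the algebraic closedness of the residue field: given any $f(x) \in M[x]$, factoring $f$ over the algebraic closure of $M$ reduces the truth of $v(f(x))$, $N(f(x))$, $P_n(f(x))$, and $f(x) \mid g(x)$ to valuation and residue computations on each linear factor $x - \alpha$, and these are in turn decided by the chosen valuation cut together with the multiplicative coset data. In case (d) one additionally exploits that ``transcendental over $\mathbb{C}$'' is itself a single complete type in the residue sort, since $\mathbb{C}$ is algebraically closed, so distinct transcendental residues still yield the same $1$-type of $x$ over $M$.
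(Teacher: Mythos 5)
Your proposal aims to reprove the classification from first principles, whereas the paper largely defers: it cites Flenner's classification for the four kinds, points to the argument in the $SL_2(\mathbb{Q}_p)$ paper for completeness of kinds (b) and (c), and only proves kind (d) directly, by expanding $f(b + x_1)$ in the increment $x_1$ and tracking how the leading term controls $P_n$ and $N$. So the routes are genuinely different.

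What your route buys is a self-contained and uniform treatment: a single reduction step (QE to atomic formulas in one variable, then decide each $P_n$, $N$, and divisibility formula from the valuation cut and coset data), rather than case-by-case citation. The paper's direct expansion, however, has the advantage of never leaving the ground structure.

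The gap in your approach is in the factorization step, and it is a real one. You factor $f$ over the algebraic closure $M^{\mathrm{alg}}$, which for $\mathbb{C}((t))$ is the Puiseux series field with value group $\mathbb{Q}$. The roots $\alpha_i$ thus typically lie outside $M$ and can have non-integer valuation. To decide $v(f(x)) = v(c) + \sum_i v(x - \alpha_i)$ you must compute $v(x - \alpha_i)$, but the type only records $v(x - a)$ and the coset of $x - a$ for $a \in M$; you need an argument that this data, together with the concrete embedding of $\alpha_i$ into $M^{\mathrm{alg}}$, determines $v(x - \alpha_i)$. This is true (essentially because every $\alpha_i \notin M$ has a strictly maximal approximant in $M$, by spherical completeness of $\mathbb{C}((t))$, so the ultrametric inequality pins $v(x - \alpha_i)$ down), but it is precisely the content you are trying to prove and cannot be waved through. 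Separately, the claim that $P_n(f(x))$ is decided by ``valuation and residue computations on each linear factor'' needs the explicit normal form for $P_n$ in this theory: $P_n(a)$ holds iff $n \mid v(a)$, by algebraic closedness of the residue field and the Corollary to Hensel's Lemma. Without that reduction, $P_n$ is a membership predicate, not a valuation predicate, and the individual factors $x - \alpha_i$ do not even live in the home structure, so asking whether each is an $n$-th power in $\mathbb{K}$ is not meaningful. Both points are repairable, but as written the key step is asserted rather than established.
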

\begin{proof}[\textbf{Proof}]
The classification of 1-types over $\mathbb{C}((t))$ can be found in \textbf{\cite{Francoise}}, though we rename them here and observe that types of kind $(a)$ are the immediate types, kinds $(b)$ and $(c)$ are the valuational types, and types of kind $(d)$ are the residual types. 

The proof that the types of kind $(b)$ and $(c)$ are complete follows similarly to the proof in \cite{SL2QP}. We prove types of kind $(d)$ directly, though one can also observe that these types are translations of a type shown to be complete in \cite{Francoise}.

Let $p_{b,n,trans}$ be as above. Let $x_0 \vDash p_{b,n,trans}$. Then $x_0 = b + x_1 = a + \alpha t^n + ...$, where $\alpha$ is transcendental over $\mathbb{C}$.

Then by QE, we consider polynomials $f$ of $\mathbb{C}((t))[x]$. We may assume $f$ is not a constant polynomial. Hence; 
\begin{align*}
f(x_0) & = a_0 + a_1x_0 + a_2x_0^2 + ... + a_mx_0^m \\ 
       & = a_0 + a_1(b+x_1) + a_2(b+x_1)^2 + ... + a_m(b+x_1)^m \\
       & = ( a_0 + a_1b + a_2b^2 + ... + a_mb^m ) + (a_1x_1 + a_2(2bx_1 + x_1^2) + .... + a_mx_1^m) \\
       & = f(b) + c_1x_1 + c_2x_1^2 + c_3x_1^3 + ... c_mx_1^m \\
       & = f(b) + g(x_1)
\end{align*}
Where the coefficients $c_i$ are elements of $\mathbb{K}^{*0}$. This is possible since $b \in \mathbb{C}((t))$. Remember that the angular component is not definable in this setting, but since each $c_ix_1^i$ has some valuation $z \in \mathbb{Z}$ and we allow parameters from $M$, we can instead consider $res(c_ix_1^it^{-z})$. Hence, since $x_1$ is transcendental over $\mathbb{C}$, $res(c_ix_1^it^{-z})$ is transcendental over $\mathbb{C}$.

Also note that since $\mathbb{C}$ is algebraically closed, $x_1^i \notin res(\mathbb{K})$ for any $i$. We can express $g(x_1) = d_0\delta_0 + d_1\delta_1 + d_2\delta_2 +... $ with $d_i \in \mathbb{C}((t))$ and $\delta_i$ transcendental over $\mathbb{C}$. Using this, we see $P_n(g(x_1)) \iff P_n(d_0\delta_0) \iff P_n(d_0)$ since $\delta_0$ is transcendental over $\mathbb{C}$. Further, in some expansion $\mathbb{K}$ with residue field $\delta_0 \in acl(res(\mathbb{K}))$, $\delta_0$ has $n^{th}$ roots for all $n$, and so $\vDash P_n(\delta_0)$ for all $n$.

Hence $P_n(g(x_1)) \iff P_n(d_0)$. Further, since $f(b)$ is an element of $\mathbb{C}$, we see $P_n(f(x_0)) \iff P_n(f(b) + g(x_0)) \iff P_n(f(b) + d_0)$. Hence $P_n(f(x_0))$ is determined.

We can determine $N(f(x_0))$ similarly by considering the valuation of $g(x_1)$. Hence $P_n(f(x_0))$ and $N(f(x_0))$ are determined and $p_{b,n,trans}$ is a complete type as required. 
\end{proof}

\begin{Corollary}
Every (left) $\mathbb{K}^*$-translate of of the global heir of $p_{0,C}$ is definable over $M$.
\end{Corollary}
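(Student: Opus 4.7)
The plan is to exploit quantifier elimination in $L$ together with the fact that $p_{0,C}$ is a definable $1$-type. By Fact~\ref{Fact_Language}, $p_{0,C}$ is definable over $M$, so by Fact~\ref{Fact_UniqueHeirsCoheirs} the unique global heir $\tilde{p}$ is its canonical extension via the defining scheme, and is in particular $M$-definable. For $c \in \mathbb{K}^*$, the translate $c \cdot \tilde{p}$ is \emph{a priori} only $M \cup \{c\}$-definable; the content of the corollary is that $c$ can be eliminated from the defining scheme, and I would verify this by analysing atomic formulas.

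Fix $a$ realising $\tilde{p}$ in a further saturated extension. From the defining scheme of $p_{0,C}$ one reads off that $v(a) > \gamma$ for every $\gamma \in \Gamma(\mathbb{K})$, so $v(ca)$ also dominates $\Gamma(\mathbb{K})$. For any $L$-polynomial $f(y,\bar{z}) = \sum_i f_i(\bar{z})\, y^i$ and any $\bar{d} \in \mathbb{K}$, set $k = k(\bar{d}) := \min\{i : f_i(\bar{d}) \neq 0\}$ (with $k = \infty$ if $f(y,\bar{d}) \equiv 0$). Then
$$f(ca,\bar{d}) \; = \; (ca)^k\, f_k(\bar{d})\, (1 + \epsilon), \qquad v(\epsilon) > \Gamma(\mathbb{K}),$$
and the Corollary to Hensel's Lemma gives $1 + \epsilon \in \bigcap_n P_n$, whence $v(f(ca,\bar{d})) = k\, v(ca) + v(f_k(\bar{d}))$ whenever $k < \infty$.

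By Fact~\ref{Fact_Language} it suffices to check that each atomic formula $\phi(y,\bar{d})$ lies in $c\cdot\tilde{p}$ under an $L(M)$-condition on $\bar{d}$. The cases $f(y,\bar{d}) = 0$ and $N(f(y,\bar{d}))$ are immediate (equivalent to $\bigwedge_i f_i(\bar{d}) = 0$, respectively $k(\bar{d})=0 \wedge v(f_0(\bar{d}))=1$). For $f \mid g$, since $v(ca)$ dominates $\Gamma(\mathbb{K})$, the valuation comparison reduces to a case analysis on $k_f$ versus $k_g$ with tiebreaker $v(f_{k_f}(\bar{d})) \leq v(g_{k_g}(\bar{d}))$, all $L(M)$-definable. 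For $P_n(f)$, the first-order statement ``every unit is an $n$-th power'' holds in $M$ by Hensel plus algebraic closedness of the residue field, hence persists to $\mathbb{K}$; so $P_n(x) \iff v(x) \equiv 0 \pmod n$, and $P_n(f(ca,\bar{d})) \iff k\,v(ca) + v(f_k(\bar{d})) \equiv 0 \pmod n$. Now $v(ca) \bmod n$ is a fixed constant $s \in \mathbb{Z}/n\mathbb{Z}$ (determined by $v(c) \bmod n$ together with the coset $C$), so the condition becomes $v(f_k(\bar{d})) \equiv -ks \pmod n$, which is $L(M)$-definable via $P_n(t^{-r} f_k(\bar{d}))$ for any $r \in \mathbb{Z}$ with $r \equiv -ks \pmod n$.

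The main obstacle is the $P_n$ case, where one must track precisely how the parameter $c$ enters and observe that it contributes only a residue class modulo $n$ — something that can be absorbed into the \emph{choice} of defining formula (via the $M$-constant $t^{-r}$) rather than requiring $c$ itself as a parameter. Boolean combinations of the atomic cases, together with a disjunction over the finitely many possible values of $k$, then assemble into an $L(M)$-defining scheme for $c \cdot \tilde{p}$, as required.
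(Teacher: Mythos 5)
Your proof is correct and, while it reaches the same key insight as the paper, it works through the details more explicitly and from a slightly different angle. The paper's argument isolates only the $P_n$-formulas (the only place where the translating parameter genuinely threatens definability) and invokes the finite index of $P_n(\mathbb{K}^*)$ in $\mathbb{K}^*$ to replace the extraneous parameter $a^{-1}$ with an $M$-representative $b$ of its $P_n$-coset; the other atomic cases are left implicit. You instead carry out a full leading-term analysis: since $v(ca)$ dominates $\Gamma(\mathbb{K})$, each polynomial collapses to $(ca)^k f_k(\bar d)(1+\epsilon)$ with $1+\epsilon\in\bigcap_n P_n$ by the Hensel corollary, and from this you read off $L(M)$-definable conditions case-by-case for $=$, $N$, $\mid$, and $P_n$. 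For the $P_n$-case you use the first-order fact $P_n(x)\iff n\mid v(x)$ (valid in $\mathbb{C}((t))$ by Hensel plus algebraic closedness of the residue field, hence in $\mathbb{K}$ by elementarity), so the contribution of $c$ reduces to the class $s=v(ca)\bmod n$, which you absorb into the defining scheme via the $M$-parameter $t^{-r}$. This is essentially the finite-index observation in disguise—both say the $c$-dependence is a finite amount of data with an $M$-representative—but your route is more self-contained and handles arbitrary atomic formulas $\phi(y,\bar z)$ rather than just the linear instance $P_n(x)$ that the paper's notation implicitly treats.
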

\begin{proof}[\textbf{Proof}]
Let $a \in \bar{M}^*$ and $x_0 \vDash p_{0,C}$
Suppose $P_n(f(x)) \in ap_{0,C}$. Then $ax_0 \vDash P_n(f(x)) \iff a^{-1}P_n(f(x)) \in p_{0,C}.$ Since $P_n(\mathbb{K}^*)$ has finite index in $\mathbb{K}^*$, $\exists b \in M^*$ such that $a^{-1}P_n(\mathbb{K}^*) = bP_n(\mathbb{K}^*)$. Hence $P_n(f(x)) \in ap_{0,C} \iff bP_n(f(x)) \in p_{0,C}$. As $p_{0,C}$ is $\emptyset$-definable, $b$ is $\mathbb{C}((t))$-definable and so $ap_{0,C}$ is definable over $M$ as required.  
\end{proof}

We denote by $S_{\mathbb{G}_a}(M)$ the space of complete types concentrating on $\mathbb{G}_a$, where $\mathbb{G}_a(M) = (\mathbb{C}((t)),+)$, and so $S_{\mathbb{G}_a}(M)$ is a flow under the additive group action.  

\begin{Proposition} \leavevmode \label{Proposition_MinSubAdd}
 \begin{enumerate}[(i)]
  \item The types $p(x) \in S_{\mathbb{G}_a}(M)$ of kind $(c)$ are definable generic types of $(\mathbb{G}_a, +)$. Moreover, the global heir of $p_{\infty,C}$ is invariant under the action of $(\mathbb{K},+)$ for any coset $C$ of $\mathbb{K}^{*0}$.
  \item The types $p_{\infty, C}$ are 1-point minimal subflows of $(\mathbb{G}_a(M),S_{\mathbb{G}_a}(M))$.
  \item The global heirs of the types of kind $(c)$ are precisely the global (strongly) $f$-generics of $(\mathbb{K},+)$ and are all definable and invariant under $(\mathbb{K},+)$.
  \item $\mathbb{K}^{00}$ = $\mathbb{K}^0 = \mathbb{K}$. 
 \end{enumerate}
\end{Proposition}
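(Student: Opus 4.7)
The plan is to prove (i), (ii), (iv), (iii) in that order, centered on an invariance computation for the global heir of $p_{\infty,C}$. The only technical input beyond bookkeeping is a single application of the Corollary to Hensel's Lemma.

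For (i), definability of $p_{\infty,C}$ is immediate from Fact \ref{Fact_Language}(iii). Let $\tilde p_{\infty,C}$ denote its unique global heir (Fact \ref{Fact_UniqueHeirsCoheirs}); applying the defining schema gives $\tilde p_{\infty,C} \vdash \{ v(x) < \gamma : \gamma \in \Gamma\} \cup \{ x \in C \cdot \mathbb{K}^{*0} \}$. The central calculation is: fix $a \in \mathbb{K}$ and $x \vDash \tilde p_{\infty,C}$. Since $v(x) < v(a)$, we have $v(a + x) = v(x)$, still below every $\gamma \in \Gamma$. For the coset condition, write $a + x = x (1 + a/x)$; the element $a/x$ has valuation $v(a) - v(x)$, which exceeds every element of $\Gamma$, so $a/x$ lies in the maximal ideal of the valuation ring of $\bar{M}$. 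By the Corollary to Lemma \ref{Lemma_Hensels}, $1 + a/x \in \mathbb{K}^{*0}$, so $a + x$ is in the same coset as $x$, namely $C$. Hence $a + x \vDash \tilde p_{\infty,C}$, establishing $\mathbb{K}$-invariance of the heir. Restricting to $a \in M$ gives the analogous statement for $p_{\infty,C}$ itself, so $\{p_{\infty,C}\}$ is a fixed point of the $G(M)$-action, proving (ii).

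For (iv), $\mathbb{K}$ is a divisible abelian group and so has no proper finite-index subgroup; thus $\mathbb{K}^0 = \mathbb{K}$. For $\mathbb{K}^{00}$, the heir $\tilde p_{\infty,C}$ from (i) is $\mathbb{K}$-invariant and, being an heir, non-forking, hence $f$-generic. By Fact \ref{Fact_DefAmenStab}(ii), $Stab(\tilde p_{\infty,C}) = \mathbb{K}^{00}$; $\mathbb{K}$-invariance gives $Stab(\tilde p_{\infty,C}) = \mathbb{K}$, so $\mathbb{K}^{00} = \mathbb{K}$. For (iii), one direction is exactly the $f$-genericity observation just used. Conversely, let $q$ be a global $f$-generic on $(\mathbb{K}, +)$. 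Combining Fact \ref{Fact_DefAmenStab}(iii) with (iv) gives that $q$ is $\mathbb{K}$-invariant, and by Fact \ref{Fact_UniqueHeirsCoheirs} $q$ is the unique heir of $q|_M$, a 1-type from Lemma \ref{Lemma_1TypesC((t))}. Kinds $(a)$, $(b)$, $(d)$ are ruled out by translation: in $(a)$ the formula $x = a$ cannot survive translation by any $b \neq 0$; in $(b)$ translation by $b$ moves the ``centre'' from $a$ to $a + b$, giving a different heir; in $(d)$ translation by any $b$ with $v(b) \neq n$ shifts $v(x - a - b)$ away from $n$. So $q|_M$ must be of kind $(c)$, hence $q = \tilde p_{\infty,C}$ for some coset $C$; definability and $\mathbb{K}$-invariance were already shown in (i).

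The main obstacle is the Hensel-style computation in (i): one must carefully verify that $a/x$ lies in the maximal ideal of the valuation ring of $\bar{M}$ before applying the Corollary to absorb $1 + a/x$ into $\mathbb{K}^{*0}$. Once that step is in hand, the remaining parts reduce to routine applications of Facts \ref{Fact_UniqueHeirsCoheirs} and \ref{Fact_DefAmenStab} together with a case analysis on the 1-type classification.
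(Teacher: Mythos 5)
Your proof follows essentially the same approach as the paper: the core computation is the same factorization $a + \beta = \beta(1 + a\beta^{-1})$ combined with the observation that $1 + a\beta^{-1} \in 1 + \mathfrak{M}$ lies in $\mathbb{K}^{*0}$ by the corollary to Hensel's Lemma, and (ii)--(iv) then follow via the same applications of Fact~\ref{Fact_UniqueHeirsCoheirs} and Fact~\ref{Fact_DefAmenStab}. The one genuine addition is your converse direction for (iii): the paper shows only that the heirs of kind-$(c)$ types are $f$-generic, whereas your case analysis ruling out kinds $(a)$, $(b)$, $(d)$ by translation-instability actually earns the word ``precisely,'' which the paper's proof leaves implicit.
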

\begin{proof}[\textbf{Proof}] \leavevmode
\begin{enumerate}[(i)]
 \item Suppose that $a \in \mathbb{K}$ and $\beta \vDash p_{\infty, C}$.
Since $v(\beta) = \alpha < \Gamma$, and $v(a) = c \in \Gamma$, we have $v(\beta) < v(a)$ and hence $v(a + \beta) < \Gamma$. It remains to show that $a + \beta \in P_n(\mathbb{K})$. Since $\beta^{-1} \in P_n(\mathbb{K})$, we see $a + \beta \in P_n(\mathbb{K}) \iff \beta^{-1}(a + \beta) = 1 + a \beta^{-1} \in P_n(\mathbb{K})$.

Moreover, $v(\beta) = -v(\beta^{-1})$, and $v(\beta^{-1}) > \Gamma$ and so $v(a \beta^{-1}) > \Gamma$, we have that $1 + a \beta^{-1} \in 1 + \mathfrak{M}$, and so by the corollary to Hensel's Lemma, $1 + a \beta^{-1}$ has $n^{th}$ roots in $\mathbb{K}$ and hence $a + \beta \vDash p_{\infty, C}$. Since this type is $\mathbb{K}$-invariant, in general, it must be $\mathbb{K}^{00}$-invariant and hence is $f$-generic.

\item This follows from $(i)$. Let $q \in S_{(\mathbb{C}((t)),+)}(M)$ and consider $q * p_{\infty,C} = tp(a + \beta/M)$, where $a \vDash q$ and $\beta \vDash p_{\infty,C}|_{M,a}$. Then from above we have $tp(a + \beta / M) = tp(\beta / M)$, and hence is a subflow of $S_{\mathbb{G}_a}(M)$ under the action of $(\mathbb{K},+)$. Minimality follows trivially, since $p_{\infty,C}$ is a singleton there can be no properly contained non-empty subflow.

\item From $(ii)$, $p_{\infty,C}$ is $(\mathbb{K},+)$-invariant, and so in particular must be $(\mathbb{K}^{00}, +)$-invariant, and so by Fact \ref{Fact_DefAmenStab}, $p_{\infty,C}$ is $f$-generic. We note that the choice of $C$ was arbitrary in $(ii)$, and so all types of kind $(c)$ are (strongly) $f$-generic as required.

\item Since $p_{\infty,C}$ is a global $f$-generic type, by Fact \ref{Fact_DefAmenStab} $\mathbb{K}^{00}$ is precisely the stabilizer of the type $p_{\infty,C}$ ; $Stab(p_{\infty,C}) = \{ g \text{ : } gp = p \} = \mathbb{K}$.
\end{enumerate}
\end{proof}

Similarly, we denote by $S_{\mathbb{G}_m}(M)$ the space of complete types concentrating on $\mathbb{G}_m$, where $\mathbb{G}_m(M) = (\mathbb{C}((t))^*, \times)$, and so $S_{\mathbb{G}_m}(M)$ is a flow under multiplication of non-zero field elements. 

\begin{Proposition} \leavevmode \label{Proposition_MinSubMult}
\begin{enumerate}[(i)]
 \item The types $P_{0} = \{ p_{0,C} \text{ : } v(x) > n \hspace{1mm} \forall n \in \Gamma$ \text{ and } $C$ some coset of $(\mathbb{K}^*)^0 \}$ form a minimal subflow of $(\mathbb{G}_m, S_{\mathbb{G}_m}(M))$.
 \item  The types $P_{\infty} = \{ p_{\infty, C} \text{ : } v(x) < n \hspace{1mm} \forall n \in \Gamma$ and $C$ some coset of $\mathbb{K}^*)^0 \}$ form a minimal subflow of $(\mathbb{G}_m, S_{\mathbb{G}_m}(M))$.
 \item The global heirs of the these types are precisely the global (strongly) $f$-generics of $(\mathbb{K}^*,\times)$ and are all definable. The orbit of each such type of $\mathbb{K}^*$ is closed.
 \item The type-definable connected component $\mathbb{K}^{*00}$ coincides with the definable connected component $\mathbb{K}^{*0}$.
\end{enumerate}
\end{Proposition}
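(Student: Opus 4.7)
The plan is to mirror the proof of Proposition \ref{Proposition_MinSubAdd}, with the key difference being that multiplicative translation permutes the $\mathbb{K}^{*0}$-cosets rather than fixing a single type. For part (i), I would verify first that $P_0$ is closed in $S_{\mathbb{G}_m}(M)$ (being an intersection of clopen sets $\{p : v(x) > n \in p\}$ together with coset specifications) and $M^*$-invariant: if $\beta \models p_{0, C}$ and $a \in M^*$, then $v(a\beta) = v(a) + v(\beta) > n$ for all $n$ since $v(a)$ is bounded while $v(\beta)$ exceeds all standard integers, and $a\beta$ lies in the coset $aC$, so $a \cdot p_{0, C} = p_{0, aC} \in P_0$. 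For minimality, the $M^*$-orbit of $p_{0, C}$ is indexed by $M^*/M^{*0} \cong \mathbb{Z}$, while the subspace topology on $P_0$ is the inverse limit of the finite quotients $\mathbb{K}^*/P_n(\mathbb{K}^*) \cong \mathbb{Z}/n\mathbb{Z}$, giving $P_0 \cong \widehat{\mathbb{Z}}$. Density of $\mathbb{Z}$ in $\widehat{\mathbb{Z}}$ yields density of the orbit, and Fact \ref{Fact_Gorbit} yields minimality. Part (ii) is entirely parallel, using $v(a\beta) < \Gamma$ when $v(\beta) < \Gamma$.

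For (iii), I would show the unique global heirs (which exist and are definable by Facts \ref{Fact_UniqueHeirsCoheirs} and \ref{Fact_Language}) of $p_{0, C}$ and $p_{\infty, C}$ are $\mathbb{K}^{*0}$-invariant: for $g \in \mathbb{K}^{*0}$, we have $v(g) \in \Gamma$ so the valuation condition is preserved, and $gC = C$. Since $\mathbb{K}^{*0}$ is type-definable of bounded index, it contains $\mathbb{K}^{*00}$, so these heirs are $\mathbb{K}^{*00}$-invariant and thus $f$-generic by Fact \ref{Fact_DefAmenStab}. The orbit-closure statement then follows from Fact \ref{Fact_Gorbit}(iv). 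For the converse, I would show the heirs of the remaining types from Lemma \ref{Lemma_1TypesC((t))} (realized, $p_{a,C}$ with $a \neq 0$, and residual $p_{a,n,trans}$) are not $f$-generic: their stabilizers are pinned (in the sense that they must fix a specific field element or its residue up to infinitesimals), hence not of bounded index, failing the $\mathbb{K}^{*00}$-invariance criterion.

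Part (iv) is then a direct stabilizer computation: by Fact \ref{Fact_DefAmenStab}(ii), $Stab(p) = \mathbb{K}^{*00}$ for the $f$-generic heir $p$, while $g \cdot p_{0, C} = p_{0, C}$ iff $gC = C$ iff $g \in \mathbb{K}^{*0}$, so $\mathbb{K}^{*00} = \mathbb{K}^{*0}$. The main obstacle will be cleanly identifying the subspace topology on $P_0$ (and $P_\infty$) with $\widehat{\mathbb{Z}}$ and rigorously establishing density of the $M^*$-orbit; this reduces to verifying that the coset-indicator formulas $x \in a P_n(\mathbb{K}^*)$ generate the subspace logic topology, and that $t \in M^*$ generates each $M^*/P_n(M^*) \cong \mathbb{Z}/n\mathbb{Z}$, so that every coset class is visited in the orbit.
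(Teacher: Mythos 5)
Your proposal is correct; parts (iii) and (iv) match the paper's argument ($\mathbb{K}^{*0}$-invariance of the global heir, hence $\mathbb{K}^{*00}$-invariance and $f$-genericity via Fact \ref{Fact_DefAmenStab}, then equate the two descriptions of the stabiliser), but for (i) and (ii) you take a genuinely different route. The paper computes the $S_{\mathbb{G}_m}(M)$-orbit of $p_{0,C_0}$ under the $*$-semigroup operation: for $q\in S_{\mathbb{G}_m}(M)$ with $a\models q$ and $\alpha$ realising the heir of $p_{0,C_0}$ over $(M,a)$, one has $v(a\alpha)>\Gamma$ and $a\alpha$ lies in the coset of $a$, so the orbit is exactly $P_0$, and minimality follows from the orbit-closure criterion in Fact \ref{Fact_Gorbit}. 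You instead identify $P_0$, via its $P_n$-coset data and Lemma \ref{Lemma_1TypesC((t))}(b), with $\widehat{\mathbb{Z}}=\varprojlim M^*/P_n(M^*)$, observe that the $M^*$-orbit of any point is the dense subgroup generated by (the image of) $t$, and then use closedness of $P_0$ to conclude minimality. Both arguments are sound: the paper's is uniform with its later $*$-semigroup computations on $S_B(M)$ and $S_G(M)$; yours makes the topological shape of the minimal flow explicit. Two small notes: the inverse limit should be of $M^*/P_n(M^*)$ rather than $\mathbb{K}^*/P_n(\mathbb{K}^*)$, since types in $P_0$ are over $M$ and are separated by formulas $x\in aP_n$ with $a\in M^*$ (the two quotients are isomorphic, so nothing changes); and for the ``precisely'' in (iii), the paper, like you, only writes out the forward direction, so your sketched converse via unbounded stabiliser index is a useful supplement rather than a duplication of what's there.
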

\begin{proof}[\textbf{Proof}] \leavevmode
\begin{enumerate}[(i)]
\item To show $P_0$ is a minimal subflow, we show it is precisely the $S_{\mathbb{G}_m}(M)$-orbit of a type $p_{0,C_0}$.

Let $q \in S_{\mathbb{G}_m}(M)$ with $a$ realising $q$ and $\alpha$ realise the heir of $p_{0,C_0}$ over $(M,a)$. Then $q * p_{0,C_0} = tp(a\alpha/M)$. Since $v(a\alpha) = v(a) + v(\alpha) > \Gamma$, then $tp(a\alpha/M)$ must be a type of kind $(b)$, with $a\alpha$ infinitesimally close to $0$. Hence, $tp(a\alpha/M) = p_{0,C_i}$ for $C_i$ some coset of $\mathbb{K}^{*0}$. Further, since $\alpha$ is an element of the identity coset $C_0$, we have $a\alpha \in C_i \iff a \in C_i$.

However, the choice of $q$ (and $a$) was arbitrary. In particular, $a$ could lie in any coset $C_i$, and so the $S_{\mathbb{G}_m}(M)$-orbit of $p_{0,C_0}$ is \\ $P_0 = \{ p_{0,C_i} \text{ : } C_i \text{ a coset of } \mathbb{K}^{*0} \}$. It is clear that this is the orbit-closure of any type $p_{0,C_i} \in P_0$.

\item To show $P_{\infty}$ is a minimal subflow, we show it is precisely the $S_{\mathbb{G}_m}(M)$-orbit of a type $p_{\infty,C_0}$.

Let $q \in S_{\mathbb{G}_m}(M)$ with $a$ realising $q$ and $\alpha$ realise the heir of $p_{\infty,C_0}$ over $(M,a)$. Then $q * p_{\infty,C_0} = tp(a\alpha/M)$. Since $v(a\alpha) = v(a) + v(\alpha) < \Gamma$, then $tp(a\alpha/M)$ must be of kind $(c)$; that is, $tp(a\alpha/M) = p_{\infty,C_i}$ for $C_i$ some coset of $\mathbb{K}^{*0}$. Again, since $\alpha$ is an element of the identity coset $C_0$, we have $a\alpha \in C_i \iff a \in C_i$.

However, the choice of $q$ (and $a$) was arbitrary. In particular, $a$ could lie in any coset $C_i$, and so the $S_{\mathbb{G}_m}(M)$-orbit of $p_{\infty,C_0}$ is \\ $P_{\infty} = \{ p_{\infty,C_i} \text{ : } C_i \text{ a coset of } \mathbb{K}^{*0} \}$ as required.

\item We recall that $\mathbb{K}^{*0} = \bigcap\limits_{n \in \mathbb{N}} P_n(x)$, and from $(i)$ and $(ii)$ we see that types of kind $(b)$ and $(c)$ are $\mathbb{K}^{*0}$-invariant. Further, $\mathbb{K}^{*00} \subseteq \mathbb{K}^{*0}$, and so types of kind $(b)$ and $(c)$ are in particular $\mathbb{K}^{*00}$-invariant. Hence by Fact \ref{Fact_DefAmenStab} these types are $f$-generic.

\item From Fact \ref{Fact_DefAmenStab}, we see that $Stab(p_{\infty,C_0}) = \mathbb{K}^{*00}$. Note from earlier that for an element $x \in C_0$, any translate $ax \in C_0 \iff a \in C_0$, and so $\{ g \text{ : } gp = p \}$ is precisely when $g \in C_0$. But $C_0$ is $\mathbb{K}^{*0}$, and so we also see that $Stab(p_{\infty,C_0}) = \mathbb{K}^{*0}$.

Hence $\mathbb{K}^{*0} = \mathbb{K}^{*00}$.
\end{enumerate}
\end{proof}

We now consider the Borel subgroup, $B(\bar{M})$, of upper triangular matrices. We will often associate the matrix $\begin{pmatrix} b & c \\ 0 & b^{-1} \end{pmatrix} \in B(\bar{M})$ with the pair $(b,c)$ where $b \in \mathbb{K}^*$ and $c \in \mathbb{K}$. 

\begin{Lemma} \label{Lemma_B00}
$B(\bar{M})^{00} = B(\bar{M})^{0} \cong \{ (b,c) \text{ : } b \in \mathbb{K}^{*0}, c \in \mathbb{K} \}$.
\end{Lemma}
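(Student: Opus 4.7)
The plan is to identify a candidate subgroup $N$ and sandwich both $B(\bar{M})^0$ and $B(\bar{M})^{00}$ around it, exploiting the semidirect product structure $B(\bar{M}) \cong \mathbb{K}^* \ltimes \mathbb{K}$. Set $N := \{(b,c) : b \in \mathbb{K}^{*0},\, c \in \mathbb{K}\}$, which is a type-definable subgroup (cut out by $\bigwedge_n P_n(b)$, and closed under multiplication because $\mathbb{K}^{*0}$ is). Since $B^{00} \subseteq B^0$ always holds, it suffices to prove the two inclusions $B(\bar{M})^0 \subseteq N$ and $N \subseteq B(\bar{M})^{00}$; together they collapse the chain $N \subseteq B^{00} \subseteq B^0 \subseteq N$ to equalities.

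For $B^0 \subseteq N$, I would use the definable group homomorphism $\pi \colon B(\bar{M}) \to \mathbb{K}^*$ sending $(b,c) \mapsto b$. For each $n$, the set $P_n(\mathbb{K}^*)$ of $n$-th powers is a definable finite-index subgroup of $\mathbb{K}^*$: the Corollary to Hensel's Lemma combined with algebraic closedness of $\mathbb{C}$ shows that every unit of $\mathbb{K}$ is an $n$-th power, so $\mathbb{K}^*/P_n(\mathbb{K}^*) \cong \Gamma/n\Gamma$, which has size $n$ by elementary equivalence with $\mathbb{C}((t))$. Pulling back, $\pi^{-1}(P_n(\mathbb{K}^*))$ is a definable finite-index subgroup of $B(\bar{M})$, and since $\mathbb{K}^{*0} = \bigcap_n P_n(\mathbb{K}^*)$, the intersection $\bigcap_n \pi^{-1}(P_n(\mathbb{K}^*))$ equals $N$, giving $B^0 \subseteq N$.

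For $N \subseteq B^{00}$, I would decompose $B(\bar{M})$ into its torus $T = \{(b,0) : b \in \mathbb{K}^*\} \cong (\mathbb{K}^*, \times)$ and its unipotent radical $U = \{(1,c) : c \in \mathbb{K}\} \cong (\mathbb{K}, +)$. Since $B^{00}$ has bounded index in $B$, each intersection $B^{00} \cap T$ and $B^{00} \cap U$ is a bounded-index type-definable subgroup of $T$ (respectively $U$), and therefore contains $T^{00}$ (respectively $U^{00}$). By Proposition \ref{Proposition_MinSubMult}(iv), $T^{00}$ corresponds to $\mathbb{K}^{*0}$, and by Proposition \ref{Proposition_MinSubAdd}(iv), $U^{00}$ is all of $U$. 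Any $(b,c) \in N$ then factors as $(b,0) \cdot (1, b^{-1}c)$ with the first factor in $T^{00} \subseteq B^{00}$ and the second in $U \subseteq B^{00}$, so $(b,c) \in B^{00}$.

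The argument is less an obstacle to overcome than a bookkeeping exercise, so the only technical point worth flagging is verifying the finite-index computation $[\mathbb{K}^* : P_n(\mathbb{K}^*)] = n$ via Hensel's Lemma plus algebraic closedness of the residue field. Once that is in hand, the result lifts the already-known connected-component computations for $\mathbb{G}_a$ and $\mathbb{G}_m$ to the Borel essentially formally through the semidirect product structure.
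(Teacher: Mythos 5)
Your proof is correct and follows essentially the same route as the paper: both hinge on the projection $\pi\colon B(\bar{M})\to\mathbb{K}^*$, $(b,c)\mapsto b$, with kernel $(\mathbb{K},+)$, together with the connected-component computations $\mathbb{K}^{*00}=\mathbb{K}^{*0}$ and $\mathbb{K}^{00}=\mathbb{K}$ from Propositions \ref{Proposition_MinSubMult} and \ref{Proposition_MinSubAdd}. Your two-inclusion sandwich ($B^0\subseteq N$ via the finite-index pullbacks $\pi^{-1}(P_n(\mathbb{K}^*))$, and $N\subseteq B^{00}$ via $T^{00},U^{00}\subseteq B^{00}$) is a somewhat more explicit unpacking of the paper's terser ``it is clear that $\pi$ maps $B^{00}$ onto $\mathbb{K}^{*00}$ with kernel $\mathbb{K}^{00}$.''
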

\begin{proof}[\textbf{Proof}] \leavevmode
Consider the following mapping; 
\begin{center}
 \begin{align*}
 \pi :  B(\bar{M}) & \rightarrow \mathbb{K}^* \\
             (b,c) & \mapsto b
 \end{align*}
\end{center}
With $Ker(\pi) = (\mathbb{K}, +)$. Then it is clear that $\pi : B(\mathbb{K}^{00}) \rightarrow \mathbb{K}^{*00}$ with Kernel isomorphic to $(\mathbb{K}^{00}, +)$. Using the results of Propositions \ref{Proposition_MinSubAdd} and \ref{Proposition_MinSubMult} that $\mathbb{K}^{*00} = \mathbb{K}^{*0}$ and $(\mathbb{K}^{00}, +) = (\mathbb{K}, +)$, we obtain $B(\bar{M})^{00} = B(\bar{M})^{0} = \{ (b,c) \text{ : } b \in \mathbb{K}^{*0}, c \in \mathbb{K} \}$. 
\end{proof}

Recall that $C_0$ denotes $(\mathbb{K}^*)^0$, $\bar{p}_{0,C_{0}}$ is a global $f$-generic of $S_{\mathbb{G}_a}(\bar{M})$ and that $\bar{p}_{\infty,C_0}$ is a global $f$-generic type in $S_{\mathbb{G}_m}(\bar{M})$. Let $\beta$ realise $\bar{p}_{0,C_0}$ and $\gamma$ realize the heir of $\bar{p_{\infty,C_0}}$ over $(\bar{M}, \beta)$. 

Consider then the pairs $(\beta,0)$ and $(1,\gamma)$ and we identify these pairs with the types $tp((\beta,0)/\bar{M})$ and $tp((1,\gamma)/\bar{M}, \beta)$ of the corresponding matrix. Then $p_{0,C_0} * p_{\infty,C_0} = tp((\beta,0)/\bar{M}) * tp((1,\gamma)/\bar{M}, \beta) = tp((\beta,\gamma\beta)/\bar{M})$.

Let $\bar{p_0} = tp((\beta,\gamma)/\bar{M}) \in S_B(\bar{M})$, and so by $p_0$ we mean the restriction of this type to $M$. 

\begin{Lemma} \label{Lemma_p0}
 $\bar{p_0} \in S_B(\bar{M})$ is a (strong) $f$-generic of $B(\bar{M})$, and moreover every left $B(\bar{M})$-translate is definable over $M$ (i.e. definable over $\mathbb{C}((t))$).
\end{Lemma}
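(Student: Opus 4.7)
My plan splits the claim into two parts: (1) showing that $\bar{p_0}$ is $B(\bar{M})^{00}$-invariant, hence $f$-generic via Fact~\ref{Fact_DefAmenStab}, and (2) showing that every left $B(\bar{M})$-translate of $\bar{p_0}$ is $M$-definable. For (1), I fix $(b,c) \in B(\bar{M})^{00}$, which by Lemma~\ref{Lemma_B00} means $b \in \mathbb{K}^{*0}$ and $c \in \mathbb{K}$. The group law gives $(b,c)\cdot(\beta,\gamma) = (b\beta,\,b\gamma + c\beta^{-1})$, and I need this pair to have type $\bar{p_0}$ over $\bar{M}$. Since $\bar{p}_{0,C_0}$ is $\mathbb{K}^{*0}$-invariant by Proposition~\ref{Proposition_MinSubMult}, $b\beta$ has the same type as $\beta$ over $\bar{M}$, so I choose an automorphism $\sigma$ fixing $\bar{M}$ pointwise with $\sigma(\beta) = b\beta$; then $\sigma(\gamma)$ realises the heir of $\bar{p}_{\infty,C_0}$ over $(\bar{M},b\beta)$. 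By uniqueness of heirs (Fact~\ref{Fact_UniqueHeirsCoheirs}), it suffices to show $b\gamma + c\beta^{-1}$ also realises this heir, as then $tp((b\beta, b\gamma + c\beta^{-1})/\bar{M}) = tp((b\beta, \sigma(\gamma))/\bar{M}) = \bar{p_0}$.

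The heir verification is the main obstacle, and is a valuation-theoretic calculation in the style of Proposition~\ref{Proposition_MinSubAdd}(i). The heir is characterised by $v(x) < v(m)$ for every $m$ in the structure generated by $\bar{M}$ and $\beta$, together with $x \in C_0$. Scaling by $b \in \mathbb{K}^{*0} = C_0$ preserves both conditions: $bC_0 = C_0$, and $v(b\gamma) < v(m)$ for any such $m$ because $mb^{-1}$ lies in the same structure. For the additive translation by $c\beta^{-1}$: taking $m = cb^{-1}\beta^{-1}$ in the heir condition for $\gamma$ yields $v(b\gamma) < v(c\beta^{-1})$, so the non-Archimedean triangle inequality gives $v(b\gamma + c\beta^{-1}) = v(b\gamma)$. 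Factoring $b\gamma + c\beta^{-1} = b\gamma\bigl(1 + cb^{-1}\beta^{-1}\gamma^{-1}\bigr)$, the parenthesised summand has positive valuation and so lies in $1 + \mathfrak{M}$, giving $n$-th roots for all $n$ by the Hensel corollary following Lemma~\ref{Lemma_Hensels}. Thus $b\gamma + c\beta^{-1}$ remains in coset $C_0$ and has the required small valuation, completing the heir verification.

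For (2), I follow the template of the Corollary following Lemma~\ref{Lemma_1TypesC((t))}. The 2-type $(b,c)\bar{p_0}$ decomposes into a marginal on the first coordinate and a conditional on the second. The marginal is the $\mathbb{K}^*$-translate $b\bar{p}_{0,C_0}$, which is $M$-definable by that Corollary. The conditional arises from $\bar{p}_{\infty,C_0}$ via multiplicative scaling by $b$ and additive shifting by an element of $(\bar{M},\beta)$; the $(\mathbb{K},+)$-invariance of the additive $f$-generic (Proposition~\ref{Proposition_MinSubAdd}(iv)) absorbs the shift, while the scaling reduces to an $M$-definable coset shift via the finite-index property of $P_n(\mathbb{K}^*)$, exactly as in the argument of the $\mathbb{G}_m$ Corollary. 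Patching the two coordinates delivers the $M$-definability of $(b,c)\bar{p_0}$, which in particular gives the ``strong'' clause of the lemma since definable types do not fork over their base.
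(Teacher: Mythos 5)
Your proof is correct and follows essentially the same route as the paper: reduce $B(\bar{M})^{00}$-invariance to checking the first coordinate via $\mathbb{K}^{*0}$-invariance of $p_{0,C_0}$ and the second via verifying the heir condition for $p_{\infty,C_0}$, then deduce $M$-definability of translates from definability of the marginal and conditional types. The only difference is cosmetic: you route the first-coordinate step through an explicit automorphism rather than simply observing $dcl(\bar{M},\beta)=dcl(\bar{M},b\beta)$, and you spell out the factorisation $b\gamma + c\beta^{-1} = b\gamma(1 + cb^{-1}\beta^{-1}\gamma^{-1})$ with the Hensel argument, which the paper leaves implicit.
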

\begin{proof}[\textbf{Proof}]
We show $f$-genericity by proving that $\bar{p_0}$ is $B(\bar{M})^{00}$-invariant.
Let $(b,c) \in B(\bar{M})^{00}$, which by Lemma \ref{Lemma_B00}, means $b \in (\mathbb{K}^*)^0$ and $c \in \mathbb{K}$. Since the operation here is matrix multiplication, we note that $(b,c)(\beta,\gamma) = (b\beta, b\gamma + c\beta^{-1})$.

We want to show that $tp((b\beta, b\gamma + c\beta^{-1})/\bar{M}) = tp((\beta,\gamma)/\bar{M})$.

It is equivalent to show that $tp(b\beta/\bar{M}) = tp(\beta/\bar{M})$ and that $tp(b\gamma + c\beta^{-1}/\bar{M}, b\beta)$ is an heir of $tp(\gamma/\bar{M})$.

As $b \in \mathbb{K}^{*0}$ we have that $tp(b\beta/\bar{M}) = tp(\beta/\bar{M})$. Then since $b\beta \equiv_{\bar{M}} \beta$, $\gamma$ must also realise the heir of $p_{\infty,C_0}$ over $\bar{M}, b\beta$.

Since $p_{\infty,C_0}$ is invariant under multiplication by elements of $\mathbb{K}^{*0}$, 
$tp(b \gamma / \bar{M}, b\beta) = tp(\gamma / \bar{M}, b\beta)$. 
Moreover, as $v(\gamma) < dcl(M,\beta) \cap \Gamma$, $tp(b \gamma + c\beta^{-1} / \bar{M}, b\beta) = tp(b\gamma / \bar{M}, b\beta)$ and so $tp(b \gamma + c\beta^{-1} / \bar{M}, b\beta)$ is an heir of $tp(\gamma/\bar{M})$.

Since $b\beta$ realises $p_{0,C_0}$ and $(b\beta)^{-1}(b \gamma + c \beta^{-1})$ realises the unique heir of $p_{\infty,C_0}$ over $(M,b\beta)$, we have that $p_{0,C_0} * p_{\infty,C_0} = tp((b\beta, b\gamma + c\beta^{-1}) / \bar{M}) = tp((\beta,\gamma)/\bar{M})$.

Then $\bar{p_0}$ is a $B(\bar{M})^{00}$-invariant type of $S_B(\bar{M})$ and hence $f$-generic by Fact \ref{Fact_DefAmenStab}.

Finally, since $tp(\beta/\bar{M})$ is definable over $M$, and $tp(\gamma / \bar{M}, \beta)$ is the heir of $p_{\infty,C_0}$, which is also definable over $M$, we have that $tp((\beta,\gamma)/\bar{M})$ is definable over $M$. It is clear using the above argument that every left $B(\bar{M})$-translate of $\bar{p_0}$ is definable over $M$. 
\end{proof}

\begin{Proposition} \leavevmode \label{Proposition_MinSubBorel}
\begin{enumerate}[(i)]
 \item The $B(\bar{M})$-orbit of $\bar{p_0}$ is closed and is a minimal $B(\bar{M})$-subflow of $S_B(\bar{M})$.
 \item The restriction of $\bar{\mathcal{J}}$ to $M$, denoted $\mathcal{J}$, is a subgroup of $(S_B(M), *)$, is isomorphic to $B(\bar{M})/B(\bar{M})^{0}$ and hence is the Ellis Group of the flow $(B(M), S_B(M))$.
\end{enumerate}
\end{Proposition}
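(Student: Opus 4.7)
The plan is to apply the fourth bullet of Fact \ref{Fact_Gorbit} directly. By Lemma \ref{Lemma_p0}, $\bar{p_0}$ is a global $f$-generic of $B(\bar{M})$, so that bullet simultaneously yields almost periodicity of $\bar{p_0}$ and $\mathrm{cl}(B(\bar{M}) * \bar{p_0}) = B(\bar{M}) * \bar{p_0}$; the definition of almost periodicity then identifies this orbit as a minimal $B(\bar{M})$-subflow of $S_B(\bar{M})$.

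\textbf{Part (ii).} Write $\bar{\mathcal{J}} := B(\bar{M}) * \bar{p_0}$ for the minimal subflow from (i). The strategy is to produce an explicit group isomorphism $B(\bar{M})/B(\bar{M})^0 \to \mathcal{J}$ with $\bar{p_0}$ as base point, and then invoke Theorem \ref{Theorem_Ellis}. Since every $B(\bar{M})$-translate of $\bar{p_0}$ is $M$-definable (Lemma \ref{Lemma_p0}), the restriction-to-$M$ map is well-defined on $\bar{\mathcal{J}}$ and produces $\mathcal{J} \subseteq S_B(M)$; define $\Phi : B(\bar{M}) \to \mathcal{J}$ by $\Phi(g) = (g * \bar{p_0})|_M$. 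By Fact \ref{Fact_DefAmenStab}(ii) applied to the $f$-generic $\bar{p_0}$, $\mathrm{Stab}(\bar{p_0}) = B(\bar{M})^{00}$, and by Lemma \ref{Lemma_B00} this coincides with $B(\bar{M})^0$, so $\Phi$ factors to a bijection $\bar{\Phi} : B(\bar{M})/B(\bar{M})^0 \to \mathcal{J}$. To upgrade this to a group isomorphism once $\mathcal{J}$ carries the operation $*$, one verifies the identity
\[
(g_1 * \bar{p_0}) * (g_2 * \bar{p_0}) = (g_1 g_2) * \bar{p_0}
\]
by computing $\mathrm{tp}(a_1 \cdot a_2 / M)$, where $a_1 \vDash g_1 * \bar{p_0}$ and $a_2$ realises the unique heir (Fact \ref{Fact_UniqueHeirsCoheirs}) of $g_2 * \bar{p_0}$ over $(M, a_1)$. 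This is a component-wise heir computation that mirrors the one already appearing inside the proof of Lemma \ref{Lemma_p0}.

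\textbf{Conclusion and obstacle.} The element $\bar{\Phi}(B(\bar{M})^0) = \bar{p_0}|_M$ is then the identity of $(\mathcal{J},*)$, in particular an idempotent, and minimality of $\mathcal{J}$ as a closed left ideal of $(S_B(M),*)$ descends from (i); hence Theorem \ref{Theorem_Ellis} identifies $\mathcal{J}$ with the Ellis Group of $(B(M), S_B(M))$. The main technical difficulty lies in the displayed homomorphism identity: $*$ is built from heirs rather than naive composition, and left-translation by $g \in B(\bar{M})$ affects both the multiplicative coordinate (a coset of $\mathbb{K}^{*0}$, governed by the predicates $P_n$) and the additive coordinate (valuational data). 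Matching the two heir extensions correctly after translation --- in particular verifying that the valuation of the additive piece stays generic in the sense of Proposition \ref{Proposition_MinSubAdd} --- is where the real work lies.
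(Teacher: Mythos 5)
Your Part (i) matches the paper exactly: the paper also deduces orbit-closedness from the $f$-genericity of $\bar{p_0}$ (via the \cite{2015APNY} bullet of Fact \ref{Fact_Gorbit}), and reads minimality off the orbit-closure characterization.

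For Part (ii) your overall strategy is the same --- identify $p_0$ as an idempotent, pass the minimal subflow down to $M$, form the Ellis group $p_0 * \mathcal{J}$, and match it with $B(\bar{M})/B(\bar{M})^0$ --- but you frame the coset bijection more efficiently: you run the map $\Phi : g \mapsto (g*\bar{p_0})|_M$ in the direction $B(\bar{M}) \to \mathcal{J}$ and factor it through $B(\bar{M})^0$ by invoking $\mathrm{Stab}(\bar{p_0}) = B(\bar{M})^{00} = B(\bar{M})^0$ (Fact \ref{Fact_DefAmenStab}(ii) plus Lemma \ref{Lemma_B00}), whereas the paper defines the reverse map $\pi : tp(t/M) \mapsto tB(\bar{M})^0$ and argues bijectivity by hand. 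Your framing makes the set-level bijection immediate; note only that for the restriction-to-$M$ map to be injective you should explicitly invoke $M$-definability of every translate (from Lemma \ref{Lemma_p0}), so each global translate is the unique heir of its restriction.

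The genuine gap is that you defer the one step that carries the real mathematical content: the verification of the displayed identity
\[
\bigl((g_1*\bar{p_0})|_M\bigr) * \bigl((g_2*\bar{p_0})|_M\bigr) = \bigl((g_1g_2)*\bar{p_0})|_M.
\]
(Also, as stated your identity lives in global types, but $*$ is the Newelski semigroup operation on $S_B(M)$; you need the version with restrictions, as above.) The paper does not in fact prove this full identity; it proves the special case $p_0 * p_i = p_i$, i.e. that $p_0$ acts as a left identity on $\mathcal{J}$, by an explicit half-page computation: after writing $(b,c)(\beta,\gamma) = (b\beta, b\gamma + c\beta^{-1})$, one checks the multiplicative coordinate stays in the same $\mathbb{K}^{*0}$-coset because $b \in \mathbb{K}^{*0}$, and the additive coordinate stays in the same coset because $\gamma^{-1}(\gamma + b^{-1}c\beta^{-1}) = 1 + \gamma^{-1}b^{-1}c\beta^{-1} \in 1 + \mathfrak{M}$ is an $n$-th power for every $n$ by the corollary to Hensel's Lemma \ref{Lemma_Hensels}. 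That Hensel argument is precisely where ``matching the heir extensions'' is cashed out, and it is not literally a rerun of Lemma \ref{Lemma_p0} (there you used $B(\bar{M})^{00}$-invariance of a single type; here you need a statement about the semigroup operation on $\mathcal{J}$). You correctly flag this as the hard step, but you should either carry it out or reduce to the weaker claim $p_0 * p_i = p_i$ --- which suffices, since it gives $p_0 * \mathcal{J} = \mathcal{J}$, and then the homomorphism property of $\pi$ follows from the much easier observation that the coset of $B(\bar{M})^0$ determined by a realization of $p_i * p_j = tp(t_i s_j/M)$ equals $t_i t_j B(\bar{M})^0$, because $s_j \equiv_M t_j$ forces $s_j$ and $t_j$ into the same coset.
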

\begin{proof}[\textbf{Proof}] \leavevmode
\begin{enumerate}[(i)]
 \item The fact that the orbit is closed follows from Lemma 1.15 of \textbf{\cite{2015APNY}}, and it is well known that a non-empty set is a minimal flow if and only if it is the orbit closure of each of its points, a proof of which can be found in \textbf{\cite{Aus}}.

 \item First, we note that $p_0$ is itself contained in $S_{B^{0}}(M)$, and since $p_0$ is $B(\bar{M})^{0}$-invariant by Lemma \ref{Lemma_p0}, we have that $p_0$ is idempotent. 

 That subflows are preserved under restrictions is a consequence of Proposition 5.4 of \textbf{\cite{2015APNY}}. Since $\bar{\mathcal{J}}$ is the minimal subflow of $(B(\bar{M}),S_B(\bar{M}))$, the restriction $\mathcal{J}$ is a minimal subflow of $(B(M),S_B(M))$. We can then form the Ellis group of $(B(M),S_B(M))$, which is $(p_0 * \mathcal{J}, *)$.

We now show that $p_0 * \mathcal{J} = \mathcal{J}$. Clearly, $p_0 * \mathcal{J} \subseteq \mathcal{J}$, as $\mathcal{J}$ is a minimal subflow of $S_B(M)$. Let $p_i \in \mathcal{J}$. We claim that $p_0 * p_i = p_i$. 

\textbf{Claim:} $p_0 * p_i = p_i$.
\begin{proof}[\textbf{Proof of Claim}]
Let $(b,c)$ realise $p_0$ and $(\beta,\gamma)$ realise the heir of $p_i$ over $(M, (b,c))$. Then we want to show that $tp((b\beta,b\gamma + c\beta^{-1})/M) = tp((\beta,\gamma)/M)$. 

The valuational arguments in Lemma \ref{Lemma_p0} carry over, namely that if $\beta$ is negatively infinitely valued then so is $b\beta$, and that $v(\gamma) < dcl(M,\beta) \cap \Gamma$. It remains to prove that $b\beta$ lies in the same coset as $\beta$, and that $b\gamma + c\beta^{-1}$ lies in the same coset as $\gamma$.

Since $\mathbb{K}^{*0}$ acts as the identity on $\mathbb{K}^*/\mathbb{K}^{*0}$, and $b \in \mathbb{K}^{*0}$, we know that $b\beta$ lies in the same coset as $\beta$. 

For $b\gamma + c\beta^{-1}$ this is not as clear. Instead observe that $\gamma + b^{-1}c\beta^{-1}$ is in the same coset as $\gamma$ if and only if $\gamma^{-1}(\gamma + b^{-1}c\beta^{-1}) = 1 + \gamma^{-1}b^{-1}c\beta^{-1} \in \mathbb{K}^{*0}$. Since $v(\gamma^{-1})$ is infinite over $(M,b,c,\beta)$, we see that $1 + \gamma^{-1}b^{-1}c\beta^{-1} \in 1 + \mathfrak{M}$. 

Hence by the corollary to Hensel's Lemma (\ref{Lemma_Hensels}), $1 + \gamma^{-1}b^{-1}c\beta^{-1}$ has $n^{th}$ roots for all $n$ and so lies in $\mathbb{K}^{*0}$. Hence $\gamma + b^{-1}c\beta^{-1}$ lies in the same coset as $\gamma$. Finally, since $b \in \mathbb{K}^{*0}$, $b(\gamma + b^{-1}c\beta^{-1}) = b\gamma + c\beta^{-1}$ also lies in the same coset as $\gamma$ as required. Hence $p_0 * p_i = p_i$ for all $p_i \in \mathcal{J}$.
\end{proof}

So $p_0 * \mathcal{J} = \mathcal{J}$, with $p_0$ acting as identity we see $(p_0 * \mathcal{J}, *) = (\mathcal{J}, *)$.

From this, we obtain the following map;
\begin{align*}
\pi : \mathcal{J} & \rightarrow B(\bar{M})/B(\bar{M})^{0} \\ 
      tp(t/M) & \mapsto tB(\bar{M})^{0}
\end{align*}
We show that $\pi$ is an isomorphism. First, we show $\pi$ is a group homomorphism.

$p_0 = tp(t_0/M)$ where $t_0$ is a matrix in $B(\bar{M})^{0}$, and so clearly $t_0B(\bar{M})^{0} = B(\bar{M})^{0}$, which is the identity element of the quotient group.

Since $\mathcal{J}$ is a group, let $p_i \in \mathcal{J}$ with inverse $p_{i}^{-1}$. Then $p_i$ is realised by some $t_i$, and so $\pi(p_i) = t_i(B(\bar{M})^{0})$.

The heir of $p_{i}^{-1}$ over $(M,t_i)$, and so in particular $p_{i}^{-1}$ itself, is realised by some $s_i$, and so $\pi(p_{i}^{-1}) = s_i(B(\bar{M})^{0})$. Note we are not claiming $s_i$ is the inverse of $t_i$ in $G$, just that $s_i$ is a realisation of the inverse of $p_i$ in $\mathcal{J}$.

We claim that $s_i(B(\bar{M})^{0}) \cdot t_i(B(\bar{M})^{0}) = B(\bar{M})^{0}$.

From coset multiplication we have $s_i(B(\bar{M})^{0}) \cdot t_i(B(\bar{M})^{0}) = (t_i s_i)B(\bar{M})^{0}$. Then as $p_i^{-1} * p_{i} = p_0$, we see $s_i t_i \in B(\bar{M})^{0}$, and so $s_i(B(\bar{M})^{0}) \cdot t_i(B(\bar{M})^{0}) = B(\bar{M})^{0}$.

Hence $\pi((p_i)^{-1}) = ((t_i)(B(\bar{M})^{0}))^{-1}$ as required, and so $\pi$ is a group homomorphism. 

It is easy to see that $\pi$ is bijective. Note $\mathcal{J}$ is a section of $B(\bar{M})/B(\bar{M})^{0}$. That is to say $\pi$ is surjective since, for every coset $t(B(\bar{M})^{0})$, we can associate a type $p_i \in \mathcal{J}$ with $t' \in t(B(\bar{M})^0)$ and $t' \vDash p_i$ such that $\pi(p_i) = t(B(\bar{M})^{0})$. Injectivity follows from the definition of $\mathcal{J}$, observing that each type in $\mathcal{J}$ is determined uniquely by a coset of $\mathbb{K}^{*0}$.
\end{enumerate}
\end{proof}

\section{The Minimal Subflow of $(G(M),S_G(M))$.}

As before, $p_i$ will denote a type in $\mathcal{J}$, where $p_i$ specifies in which coset $C_i$ of $\mathbb{K}^{*0}$ the realisation of $p_i$ lies. We will again use the notation in \ref{Lemma_1TypesC((t))} for the valuational types, with $p_{\infty,C_0}$ the minimal subflow of $(\mathbb{G}_a, S_{\mathbb{G}_a}(M))$. 

We will often associate some $h \vDash p_{\infty,C_0}$ with the matrix $\begin{pmatrix} 1 & 0 \\ \alpha & 1 \end{pmatrix}$, and likewise some $t \vDash p$ with the matrix $\begin{pmatrix} \beta & \gamma \\ 0 & \beta^{-1} \end{pmatrix}$. We will not distinguish between $z \in \mathbb{Z}/4\mathbb{Z}$ and the type determined by the formula $x = z$. As before, $\mathbb{K}$ will denote some elementary extension of $\mathbb{C}((t))$ with $C_0$ denoting the identity coset $\mathbb{K}^{*0}$ itself.

We approach this by attempting to build a minimal subflow around an idempotent element which lies in the $*$-product of the minimal subflows of $H$ and $B$.

\begin{Proposition} \label{Proposition_C((t))Idemp}
Let $p_0 \in \mathcal{J}$ as in Lemma \ref{Lemma_p0} and $p_{\infty,C_0}$ a minimal subflow of $S_{\mathbb{G}_a}(M)$. Then the type $p_{\infty,C_0} * p_0$ is an idempotent element of $(S_G(M), *)$.
\end{Proposition}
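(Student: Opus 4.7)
The plan is to show $r := p_{\infty,C_0} * p_0 \in S_G(M)$ is idempotent by directly realising $r*r$ and then identifying it, via the $H \cdot B$ decomposition of Proposition \ref{Proposition_C((t))Decomp}, with a realisation of $r$. First I would realise $r*r$ by a tower of heirs: pick $\alpha_1 \vDash p_{\infty,C_0}$ over $M$; then $(\beta_1,\gamma_1)$ realising the heir of $p_0$ over $(M,\alpha_1)$; then $\alpha_2$ realising the heir of $p_{\infty,C_0}$ over $(M,\alpha_1,\beta_1,\gamma_1)$; and finally $(\beta_2,\gamma_2)$ realising the heir of $p_0$ over $(M,\alpha_1,\beta_1,\gamma_1,\alpha_2)$. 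Writing $h_i = \begin{pmatrix} 1 & 0 \\ \alpha_i & 1 \end{pmatrix}$ and $t_i = \begin{pmatrix} \beta_i & \gamma_i \\ 0 & \beta_i^{-1} \end{pmatrix}$, uniqueness of heirs (Fact \ref{Fact_UniqueHeirsCoheirs}) gives that $h_1 t_1 \vDash r$ and $h_2 t_2$ realises the heir of $r$ over $(M,h_1 t_1)$, so $(h_1 t_1)(h_2 t_2) \vDash r * r$. The heir tower produces the strict valuation chain
\[
v(\gamma_2) < v(\alpha_2) < v(\gamma_1) < v(\alpha_1) < \Gamma(M) < v(\beta_1) < v(\beta_2),
\]
with all six parameters lying in $C_0$.

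The algebraic core is to reassociate $(h_1 t_1)(h_2 t_2) = h_1(t_1 h_2)t_2$ and re-decompose the middle factor. A direct calculation gives
\[
t_1 h_2 = \begin{pmatrix} \beta_1 + \gamma_1\alpha_2 & \gamma_1 \\ \beta_1^{-1}\alpha_2 & \beta_1^{-1} \end{pmatrix},
\]
whose $(1,1)$ entry is non-zero because $v(\gamma_1\alpha_2) < 0 < v(\beta_1)$. Proposition \ref{Proposition_C((t))Decomp} then factors $t_1 h_2 = h' t'$ with $h' \in H$ parameterised by $\alpha' = \beta_1^{-1}\alpha_2/(\beta_1+\gamma_1\alpha_2)$, and $t' \in B$ having first row $(\beta_1+\gamma_1\alpha_2,\,\gamma_1)$. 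Regrouping gives $(h_1 t_1)(h_2 t_2) = (h_1 h')(t' t_2) =: h_0 t_0$, where $h_0 \in H$ is parameterised by $\alpha_0 = \alpha_1 + \alpha'$, and $t_0 \in B$ by $\beta_0 = (\beta_1+\gamma_1\alpha_2)\beta_2$ and $\gamma_0 = (\beta_1+\gamma_1\alpha_2)\gamma_2 + \gamma_1\beta_2^{-1}$.

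It then suffices to verify that $\alpha_0 \vDash p_{\infty,C_0}$ over $M$ and that $(\beta_0,\gamma_0)$ realises the heir of $p_0$ over $(M,\alpha_0)$, for then by the definition of $*$ we get $tp(h_0 t_0/M) = p_{\infty,C_0} * p_0 = r$, yielding $r*r = r$. Each check splits into a valuation condition and a coset condition, and the valuations fall out of the heir chain once the dominant summand in each expression is identified: $v(\alpha') = -v(\beta_1)-v(\gamma_1) > 0$ so $v(\alpha_0) = v(\alpha_1) < \Gamma(M)$; the heir condition on $\beta_2$ forces $v(\beta_0) > \Gamma(M,\alpha_0)$; and the heir condition on $\gamma_2$ forces $v(\gamma_0) < \Gamma(M,\alpha_0,\beta_0)$.

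The main obstacle is the coset bookkeeping. Each of $\alpha_0,\beta_0,\gamma_0$ is a sum of terms whose valuations are widely separated, so one must factor out the dominant term and then verify that the remaining correction factor has the form $1 + x$ with $v(x) > 0$; the corollary to Hensel's Lemma then places $1+x$ in $\mathbb{K}^{*0} = C_0$, which combined with the already-established membership of the dominant term in $C_0$ gives the desired coset. The strict separation of valuations along the heir chain makes every correction factor automatically of positive valuation, so the three coset checks go through uniformly, and the identification $h_0 t_0 \vDash r$ is complete.
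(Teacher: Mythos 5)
Your proof is correct and follows essentially the same route as the paper: realise $r*r$ via a tower of heirs, reassociate the product $(h_1t_1)(h_2t_2)=h_1(t_1h_2)t_2$, re-factor the middle matrix into $H\cdot B$ form (you do this by invoking Proposition~\ref{Proposition_C((t))Decomp}, the paper does it by an ad hoc matrix insertion, but the resulting $\alpha_0,\beta_0,\gamma_0$ are identical), and then check that the two factors realise $p_{\infty,C_0}$ and the heir of $p_0$ respectively. The one place you diverge is in the treatment of the $B$-factor: the paper dispatches $(\beta_0,\gamma_0)\vDash p_0$ with a one-line appeal to $p_0\in\mathcal{J}$ (i.e.\ to the $B(\bar M)^{00}$-invariance of $\bar p_0$ from Lemma~\ref{Lemma_p0}, after first noting that $b+c\alpha\in C_0$), whereas you re-derive the valuation and coset conditions directly. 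Both are valid; the paper's version is shorter once Lemma~\ref{Lemma_p0} is in hand, while yours is more self-contained and makes the coset bookkeeping (which the paper waves away with the remark that ``all elements lie in the identity coset'') fully explicit. Your stated valuation chain $v(\gamma_2)<v(\alpha_2)<v(\gamma_1)<v(\alpha_1)<\Gamma(M)<v(\beta_1)<v(\beta_2)$ is exactly what the heir tower forces, and the dominant-term analysis giving $v(\alpha')=-v(\beta_1)-v(\gamma_1)>0$ matches the paper's claim that $v\bigl((b^2\alpha^{-1}+bc)^{-1}\bigr)$ is positively infinite.
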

\begin{proof}[\textbf{Proof}]
To show this is an idempotent, we need to show $(p_{\infty,C_0} * p_0) * ( p_{\infty,C_0} * p_0) = p_{\infty,C_0} * p_0$.

Let $h_0$ realise $p_{\infty,C_0}$, let $t_0$ realise the heir of $p_0$ over $(M,h_0)$, let $h$ realise the heir of $p_{\infty,C_0}$ over $(M,h_0,t_0)$ and let $t$ realise the heir of $p_0$ over $(M, h_0,t_0,h)$. Then $(p_{\infty,C_0} * p_0)^2 = tp(h_0t_0ht/M)$. Then;

\begin{align*}
h_0t_0ht & = \begin{pmatrix} 1 & 0 \\ a & 1 \end{pmatrix} \begin{pmatrix} b & c \\ 0 & b^{-1} \end{pmatrix} \begin{pmatrix} 1 & 0 \\ \alpha & 1 \end{pmatrix} \begin{pmatrix} \beta & \gamma \\ 0 & \beta^{-1} \end{pmatrix} \\
& = \begin{pmatrix} 1 & 0 \\ a & 1 \end{pmatrix} \begin{pmatrix} b + c\alpha & c \\ b^{-1}\alpha & b^{-1} \end{pmatrix} \begin{pmatrix} \beta & \gamma \\ 0 & \beta^{-1} \end{pmatrix} \\
& = \begin{pmatrix} 1 & 0 \\ a & 1 \end{pmatrix} \begin{pmatrix} 1 & 0 \\ \frac{b^{-1}\alpha}{b + c\alpha} & 1 \end{pmatrix} \begin{pmatrix} b + c\alpha & c \\ 0 & (b+c\alpha)^{-1} \end{pmatrix} \begin{pmatrix} \beta & \gamma \\ 0 & \beta^{-1} \end{pmatrix} \\
& = \begin{pmatrix} 1 & 0 \\ a + \frac{b^{-1}\alpha}{b + c\alpha} & 1 \end{pmatrix} \begin{pmatrix} \beta(b + c\alpha) & \gamma(b+c\alpha) + c\beta^{-1} \\ 0 & \beta^{-1}(b+c\alpha)^{-1} \end{pmatrix}
\end{align*}

We first note that the coset of $\mathbb{K}^{*0}$ need not be considered here, since all elements lie in the identity coset $C_0$. Then since $(\beta,\gamma) \vDash p_0|_{M,h_0,t_0,h}$, we see that $(\beta(b + c\alpha), \gamma(b+c\alpha) + c\beta^{-1})$ also realises $p_0|_{M,h_0,t_0,h}$ since $p_0 \in \mathcal{J}$.

We prove that $a + \frac{b^{-1}\alpha}{b + c\alpha}$ realises $p_{\infty,C_0}$. Write $\frac{b^{-1}\alpha}{b + c\alpha} = (b^2\alpha^{-1} + bc)^{-1}$. Then since $v(c) < dcl(M,b,\alpha) \cap \Gamma$, we see $v(b^2\alpha^{-1} + bc) < dcl(M,b,\alpha) \cap \Gamma$.

Hence $v((b^2\alpha^{-1} + bc)^{-1}) > dcl(M,b,\alpha) \cap \Gamma$. Hence $v(a + \frac{b^{-1}\alpha}{b + c\alpha}) = v(a) < \mathbb{Z}$, and hence $a + \frac{b^{-1}\alpha}{b + c\alpha} \vDash p_{\infty,C_0}$.

Hence $p_{\infty,C_0} * p_0$ is idempotent in $(S_G(M),*)$.
\end{proof}

Consider a type $q$ in $S_G(M)$. Then by using the group decomposition from Proposition \ref{Proposition_C((t))Decomp}, we see that we can express any realisation $g$ of $q$ in the form $g = zht$ for $z \in \mathbb{Z}/4\mathbb{Z}$, $h \in H(\bar{M})$ and $t \in B(\bar{M})$. The same can be done for any $g \in G(M)$, this time with $z \in \mathbb{Z}/4\mathbb{Z}$, $h \in H(M)$ and $t \in B(M)$.

{We now take the orbit-closure of $p_{\infty,C_0} * p_0$. Note that the orbit-closure of an idempotent element need not necessarily be minimal, though we do claim that $cl(G(M) * p_{\infty,C_0} * p_0)$ is indeed minimal, and will prove so later. We now compute the action of $G(M)$ on $p_{\infty,C_0} * p_0$, and do so by considering the action of $H(M)$, $B(M)$ and $\mathbb{Z}/4\mathbb{Z}$ separately.

\begin{Proposition} \label{Proposition_HMEquality}
 The $H(M)$-orbit of $p_{\infty,C_0} * p_0$ is $p_{\infty,C_0} * p_0$.
\end{Proposition}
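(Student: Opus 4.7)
The plan is a direct matrix computation, relying on the $(\mathbb{K},+)$-invariance of $p_{\infty,C_0}$ established in Proposition \ref{Proposition_MinSubAdd}(i) together with the stability of the heir condition under restriction of parameters. Fix an arbitrary $h' = \begin{pmatrix} 1 & 0 \\ a' & 1 \end{pmatrix} \in H(M)$ with $a' \in M$. The goal is to prove that $h' * (p_{\infty,C_0} * p_0) = p_{\infty,C_0} * p_0$.

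First I would choose a realisation of the right-hand type in the standard way: take $a \vDash p_{\infty,C_0}$ identified with $h_0 = \begin{pmatrix} 1 & 0 \\ a & 1 \end{pmatrix}$, and then take $t_0 \vDash p_0$ realising the unique heir of $p_0$ over $(M, h_0)$, so that $h_0 t_0 \vDash p_{\infty,C_0} * p_0$. By definition of the $G(M)$-action on $S_G(M)$, we have $h' * (p_{\infty,C_0} * p_0) = tp(h' \cdot h_0 \cdot t_0 / M)$, and a direct matrix computation yields
\begin{equation*}
h' \cdot h_0 \cdot t_0 \;=\; \begin{pmatrix} 1 & 0 \\ a' + a & 1 \end{pmatrix} \cdot t_0.
\end{equation*}

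Next I would invoke the invariance: since $a' \in M \subseteq \mathbb{K}$, Proposition \ref{Proposition_MinSubAdd}(i) gives $a + a' \vDash p_{\infty,C_0}$. Moreover, because $a' \in M$, we have $dcl(M, a + a') \subseteq dcl(M, a)$, so the restriction of the heir $tp(t_0 / (M, a))$ to the smaller parameter set $(M, a + a')$ is itself an heir of $p_0$, and by the uniqueness assertion in Fact \ref{Fact_UniqueHeirsCoheirs} it coincides with the unique heir of $p_0$ over $(M, a + a')$. Writing $h_1 = \begin{pmatrix} 1 & 0 \\ a + a' & 1 \end{pmatrix}$, this shows the pair $(h_1, t_0)$ is a valid choice for realising $p_{\infty,C_0} * p_0$, and hence $h' h_0 t_0 = h_1 t_0 \vDash p_{\infty,C_0} * p_0$, yielding the claim.

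There is no serious obstacle: the essential content is the additive invariance of the generic $p_{\infty,C_0}$ combined with the standard fact that the heir property passes to any parameter base containing $M$. The only minor bookkeeping point is tracking which parameter set the heir is taken over when performing the $H(M)$-translation, and this is resolved immediately by $a' \in M$.
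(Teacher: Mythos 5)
Your proof is correct and takes essentially the same route as the paper's one-line argument ("$H(M)$ acts trivially on $p_{\infty,C_0}$"), just with the matrix computation and heir bookkeeping made explicit. One cosmetic note: since $a' \in M$ you in fact have $dcl(M,a+a') = dcl(M,a)$, so the heir over $(M,a)$ and over $(M,a+a')$ are literally the same object and no restriction argument is needed.
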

\begin{proof}[\textbf{Proof}]
 Clearly, since $p_{\infty,C_0}$ is a minimal flow of the additive group, $H(M)$ acts trivially, and we see $H(M) * p_{\infty,C_0} * p_0 = p_{\infty,C_0} * p_0$.
\end{proof}

The following computations require us to be more precise about the cosets of $\mathbb{K}^{*0}$ than previously in the paper. In the following work, $p_0 \in \mathcal{J}$ will still mean the identity coset, and $p_i$, $p_j$... will denote arbitrary elements of $\mathcal{J}$. However, by $p_{k\mathbb{K}^{*0}}$, we mean a type in $\mathcal{J}$ realised by some $(\beta,\gamma)$ with $\beta, \gamma \in k\mathbb{K}^{*0}$.

Similarly, for one types, $C_0$ will still denote the identity coset, but when necessary we will be explicit and use $k\mathbb{K}^{*0}$ in place of $C_i$ to denote the specific coset of $\mathbb{K}^{*0}$ in which the realisations lie.

\begin{Proposition} \label{Proposition_BMLeftInclusion}
The $B(M)$-orbit of $p_{\infty,C_0} * p_0$ is a proper subset $V$ of $S_1(M) * \mathcal{J}$, where $V = \{ p_{\infty,k^2\mathbb{K}^{*0}} * p_{k\mathbb{K}^{*0}} \} \cup \{ p_{a,k^2\mathbb{K}^{*0}} * p_{k\mathbb{K}^{*0}} \text{ : } a \neq 0 \}$.
\end{Proposition}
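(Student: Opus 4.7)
The approach is a direct computation of the $B(M)$-action on the idempotent $p_{\infty, C_0} * p_0$. Fix $g = \begin{pmatrix} k & l \\ 0 & k^{-1}\end{pmatrix}\in B(M)$ with $k \in M^*$, $l \in M$, and realise $p_{\infty,C_0}*p_0$ by the matrix product $ht$, where $h = \begin{pmatrix}1 & 0 \\ \alpha & 1\end{pmatrix}$ with $\alpha \vDash p_{\infty,C_0}$ and $t = \begin{pmatrix}\beta & \gamma \\ 0 & \beta^{-1}\end{pmatrix}$ with $(\beta,\gamma)$ realising the heir of $p_0$ over $(M,\alpha)$. A direct matrix computation produces $g\cdot ht$; its top-left entry $(k+l\alpha)\beta$ is nonzero, since if $l=0$ it equals $k\beta$, and if $l\neq 0$ then $v(l\alpha)$ is infinitely negative and dominates $v(k)$. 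By Proposition \ref{Proposition_C((t))Decomp}, no $\mathbb{Z}/4\mathbb{Z}$ factor is needed and $g\cdot ht = h't'$ with
\[
\alpha' = \frac{k^{-1}\alpha}{k+l\alpha}, \qquad \beta' = (k+l\alpha)\beta, \qquad \gamma' = (k+l\alpha)\gamma + l\beta^{-1}.
\]

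The analysis then splits into the two subcases $l=0$ and $l\neq 0$, in each identifying the coset of $\alpha'$ in $\mathbb{K}^*/\mathbb{K}^{*0}$ and of $(\beta',\gamma')$. When $l=0$ the computation is transparent: $\alpha' = k^{-2}\alpha$ has infinitely negative valuation and lies in a coset determined by $k$, while $(\beta',\gamma') = (k\beta, k\gamma)$ both lie in $k\mathbb{K}^{*0}$. Since $g$ is $M$-definable and the map $(\alpha,\beta,\gamma)\mapsto(\alpha',\beta',\gamma')$ is interdefinable over $M$, the heir relation between $\alpha'$ and $(\beta',\gamma')$ is preserved, and the resulting type sits in the first part of $V$.

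For $l\neq 0$ the key manipulation is to factor $\alpha' = \frac{1}{kl}\bigl(1 + \tfrac{k}{l}\alpha^{-1}\bigr)^{-1}$. Because $v(\alpha^{-1}) > \Gamma$, the factor $1+\tfrac{k}{l}\alpha^{-1}$ lies in $1+\mathfrak M$ and hence, by the corollary to Hensel's Lemma, in $\mathbb{K}^{*0}$. Thus $\alpha'$ has finite valuation $-v(kl)$ and differs from $(kl)^{-1}$ by an element of infinitely positive valuation, so $\alpha'$ realises a type of kind (b) based at $(kl)^{-1}\neq 0$; the coset of $\alpha'-(kl)^{-1}$ is read off from the leading correction term in the geometric-series expansion of $\alpha'$. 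An analogous Hensel argument gives $k+l\alpha \in l\alpha\cdot(1+\mathfrak M) \subseteq l\mathbb{K}^{*0}$, from which one reads off the coset of $(\beta',\gamma')$ and identifies $t'$ with the appropriate element of $\mathcal J$, producing an element of the second part of $V$.

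Finally, for the strict containment $V \subsetneq S_1(M) * \mathcal{J}$, observe that in $V$ the coset of the one-type factor and the coset of the $\mathcal{J}$-factor are linked through the single $B(M)$-parameter, whereas $S_1(M)*\mathcal{J}$ imposes no such constraint. An explicit witness is $p_{\infty,C_0} * p_{c\mathbb{K}^{*0}}$ where $c$ is a uniformiser: for this to lie in $V$ some $k\in M^*$ would need to solve both coset equations simultaneously, forcing $c^2 \in \mathbb{K}^{*0}$, which fails. The main obstacle is the case $l\neq 0$, where Hensel's Lemma must be applied twice — once to recognise $\alpha'$ as a kind-(b) type and compute its coset, once to extract the Borel coset from $k+l\alpha$ — while simultaneously ensuring that the heir relationship between the new one-type factor and the new Borel factor survives the $M$-definable translation, so that the image is genuinely a $*$-product in $S_G(M)$.
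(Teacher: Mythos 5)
Your proposal is correct and follows essentially the same route as the paper's proof: factor the translated realisation $g \cdot ht$ back into the form $h't'$ with $h'$ unipotent lower triangular and $t'$ upper triangular, split into the cases $l=0$ and $l\neq 0$, and use the expansion $(1+ \tfrac{k}{l}\alpha^{-1})^{-1} = 1 + x$ with $v(x)$ infinitely positive to locate the kind-(b) type and read off the linked cosets via the corollary to Hensel's Lemma. The one point where you add something the paper leaves implicit is the explicit witness ($p_{\infty,C_0}*p_{c\mathbb{K}^{*0}}$ with $c$ a uniformiser) showing the containment $V \subsetneq S_1(M)*\mathcal{J}$ is proper; that check is worth keeping.
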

 \begin{proof}[\textbf{Proof}]
Let $p_0 \in \mathcal{J}$. We compute $B(M) \cdot p_{\infty,C_0} * p_0$. 
Let $t_0 = \left( \begin{smallmatrix} b & c \\ 0 & b^{-1} \end{smallmatrix} \right)$ be an element of $B(M)$. \\
Let $h = \left( \begin{smallmatrix} 1 & 0 \\ \alpha & 1 \end{smallmatrix} \right)$ realise $p_{\infty,C_0}|_{M,t_0}$. \\
Let $t = \left( \begin{smallmatrix} \beta & \gamma \\ 0 & \beta^{-1} \end{smallmatrix} \right)$ realise $p_0|_{M,t_0,h}$.

Then $t_0 * p_{\infty,C_0} * p_0 = tp(t_0ht/M)$. We split into 2 cases; where $c = 0$ and $c \neq 0$. 

\textbf{Case 1:} Let $c = 0$.
  
 Then;
 \begin{align*}
   t_0ht & = \begin{pmatrix} b & c \\ 0 & b^{-1} \end{pmatrix} \begin{pmatrix} 1 & 0 \\ \alpha & 1 \end{pmatrix} \begin{pmatrix} \beta & \gamma \\ 0 & \beta^{-1} \end{pmatrix} \\
         & = \begin{pmatrix} b & 0 \\ 0 & b^{-1} \end{pmatrix} \begin{pmatrix} 1 & 0 \\ \alpha & 1 \end{pmatrix} \begin{pmatrix} \beta & \gamma \\ 0 & \beta^{-1} \end{pmatrix} \\
         & = \begin{pmatrix} 1 & 0 \\ b^{-2}\alpha & 1 \end{pmatrix} \begin{pmatrix} b \beta & b \gamma \\ 0 & \beta^{-1}b^{-1} \end{pmatrix}          
 \end{align*}

 Since $v(\alpha) < \mathbb{Z}$, and $v(b) \in \mathbb{Z}$, we see $v(b^{-2}\alpha) < \mathbb{Z}$. Further, since $\alpha \in \mathbb{K}^{*0}$, $b^{-2}\alpha \in b^{-2}\mathbb{K}^{*0}$. Hence $b^{-2}\alpha \vDash p_{\infty,b^{-2}\mathbb{K}^{*0}}$.

 Next, since $(\beta,\gamma) \vDash p_0$, which has $B(M)$-orbit $\mathcal{J}$ as $\mathcal{J}$ is minimal, we see that $(b \beta , b \gamma)$ realises $p_{b\mathbb{K}^{*0}} \in \mathcal{J}$.

 Hence, when $t_0 = (b,c) \in B(M)$ with $c = 0$, $t_0 * p_{\infty,C_0} * p_0 = p_{\infty,b^{-2}\mathbb{K}^{*0}} * p_k\mathbb{K}^{*0}$. 

 \textbf{Case 2:} Let $c \neq 0$. 

 Then;
 \begin{align*}
   t_0ht & = \begin{pmatrix} b & c \\ 0 & b^{-1} \end{pmatrix} \begin{pmatrix} 1 & 0 \\ \alpha & 1 \end{pmatrix} \begin{pmatrix} \beta & \gamma \\ 0 & \beta^{-1} \end{pmatrix} \\
         & = \begin{pmatrix} 1 & 0 \\ \frac{b^{-1}\alpha}{b + c\alpha} & 1 \end{pmatrix} \begin{pmatrix} \beta(b+c\alpha) & \gamma(b+c\alpha) + c\beta^{-1} \\ 0 & \beta^{-1}(b+c\alpha)^{-1} \end{pmatrix}          
 \end{align*}

 As $b\alpha^{-1} \neq 0$, we can write $\frac{b^{-1}\alpha}{b + c\alpha} = (b^2\alpha^{-1} + cb)^{-1} = (bc)^{-1}(1 + bc^{-1}\alpha^{-1})^{-1}$.

 Since $(1 + bc^{-1}\alpha^{-1})$ is in the infinitesimal neighbourhood of 1, which is itself a multiplicative group, we know that $(1 + bc^{-1}\alpha^{-1})^{-1}$ is of the form $1 + x$ where $v(x) \geq \mathbb{Z}$. Then $(1 + bc^{-1}\alpha^{-1})(1+x) = 1 + bc^{-1}\alpha^{-1} + x + xbc^{-1}\alpha^{-1}$.

 Since $(1 + bc^{-1}\alpha^{-1})(1+x) = 1$, we see $bc^{-1}\alpha^{-1} + x + xbc^{-1}\alpha^{-1} = 0$. Hence $x = -bc^{-1}\alpha^{-1} - xbc^{-1}\alpha^{-1}$, and since $v(x) \geq \mathbb{Z}$, and $b,c \in B(M)$, we see $v(x) = v(-bc^{-1}\alpha^{-1})$, and the coset of $\mathbb{K}^{*0}$ which contains $x$ is determined by $-bc^{-1}$.

 Hence  $\frac{b^{-1}\alpha}{b + c\alpha} = (bc)^{-1}(1 + x) = (bc)^{-1} + (bc^{-1})x$, and $x \in -bc^{-1}\mathbb{K}^{*0}$, and hence $((bc)^{-1})x \in c^{-2}\mathbb{K}^{*0}$.  

 Hence, $\frac{b^{-1}\alpha}{b + c\alpha} \vDash p_{(bc)^{-1}, c^{-2}\mathbb{K}^{*0}}$, where $(bc)^{-1} \neq 0 \in \mathbb{C}((t))$.

 Next, since $(\beta,\gamma) \vDash p_0$, which has $B(M)$-orbit $\mathcal{J}$ since $\mathcal{J}$ is a minimal flow of $(B,S_B(M))$, we see that $(\beta(b+c\alpha), \gamma(b+c\alpha) + c\beta^{-1})$ realises some $p_j \in \mathcal{J}$, where the coset $C_j$ of $\mathbb{K}^{*0}$ in which $\beta(b+c\alpha)$ and $\gamma(b+c\alpha) + c\beta^{-1}$ lie is determined by $(b+c\alpha)$. 

 Since $\alpha \vDash p_{\infty,C_0}$, we see that this coset is determined by $c$.

 Hence when $t_0 = (b,c)$ with $c \neq 0$, we see that $(b,c) \cdot p_{\infty,C_0} * p_0 = p_{(bc)^{-1}, c^{-2}\mathbb{K}^{*0}} * p_{c\mathbb{K}^{*0}}$.

 Note that since $b$ has no bearing on the cosets here, and $c \neq 0$, for any $a \in \mathbb{C}((t))$ and coset $c^{-2}\mathbb{K}^{*0}$ we can find an element $t_0 \in B(M)$ such that $t_0 \cdot p_{\infty,C_0} * p_0 = p_{a, c^{-2}\mathbb{K}^{*0}} * p_{c\mathbb{K}^{*0}}$. Namely, where $t_0 = ((ac)^{-1}, c)$.

 Hence, the $B(M)$-orbit of $p_{\infty,C_0} * p_0$ is a set of the form $\{ p_{\infty,k^{-2}\mathbb{K}^{*0}} * p_{k} \} \cup \{ p_{a,k^{-2}\mathbb{K}^{*0}} * p_{k} \text{ : } a \neq 0 \}$, where $p_k$ here means entries in the realisations of $p \in \mathcal{J}$ are elements of the coset $k\mathbb{K}^{*0}$.
\end{proof}

\begin{Proposition} \label{Proposition_BMEquality}
Let $V$ be as in the above proposition. Then $V$ is a subset of $B(M) \cdot p_{\infty,C_0} * p_0$ and hence $V = B(M) \cdot p_{\infty,C_0} * p_0$. 
\end{Proposition}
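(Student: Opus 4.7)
The plan is to establish the reverse inclusion $V \subseteq B(M) \cdot p_{\infty,C_0} * p_0$, since the forward inclusion is exactly the content of Proposition \ref{Proposition_BMLeftInclusion}; taken together these two inclusions give the claimed equality. The strategy is to re-read the Case~1 and Case~2 computations inside the proof of Proposition \ref{Proposition_BMLeftInclusion} as explicit surjective parametrisations rather than mere containments: each formula derived there already exhibits a specific $(b,c) \in B(M)$ producing the corresponding type in $V$, so the job is just to run those formulas \emph{backwards} from an arbitrary target in $V$.

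Concretely, split $V$ into its two defining families. Given a target type $p_{\infty,k^{2}\mathbb{K}^{*0}} * p_{k\mathbb{K}^{*0}}$ of the first family, set $t_0 = (b,0)$ with $b \in M^*$ chosen so that the output cosets $b^{-2}\mathbb{K}^{*0}$ and $b\mathbb{K}^{*0}$ arising in Case~1 coincide with those prescribed by the target; the Case~1 computation then literally witnesses $t_0 \cdot (p_{\infty,C_0} * p_0) = p_{\infty,k^{2}\mathbb{K}^{*0}} * p_{k\mathbb{K}^{*0}}$. For a target $p_{a,k^{2}\mathbb{K}^{*0}} * p_{k\mathbb{K}^{*0}}$ of the second family with $a \neq 0$, use the explicit choice $t_0 = ((ac)^{-1}, c)$ already highlighted at the end of Case~2, picking $c \in M^*$ in the coset corresponding to $k$; the formula in Case~2 then yields the target directly, since $b$ plays no role in determining the output cosets in that case.

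The only real subtlety is coset bookkeeping: one must verify that as $b$ ranges over $M^*$ (for the first family), and as $(a,c)$ ranges over $\mathbb{C}((t)) \times M^*$ with $a\neq 0$ (for the second), every element of $V$ is attained. This is immediate from the parametrisation of $V$ as stated, since $b \in M^*$ and $c \in M^*$ can be taken to lie in any prescribed coset of $\mathbb{K}^{*0}$, and $a$ can be taken to be any nonzero element of $\mathbb{C}((t))$. No new model-theoretic input is required, and the proposition therefore follows as an essentially bookkeeping corollary of Proposition \ref{Proposition_BMLeftInclusion}.
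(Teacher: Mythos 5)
Your argument is correct and follows essentially the same strategy as the paper: for each target type in $V$ you solve for the $(b,c) \in B(M)$ that produces it via the exact parametrizing formulas established in Proposition~\ref{Proposition_BMLeftInclusion} (and indeed the surjectivity for the $c \neq 0$ family is already flagged at the end of Case~2 there). The paper's own proof re-derives the required matrix factorizations directly from a realization of the target $q$ rather than quoting those formulas --- a more self-contained presentation that is reused in later propositions --- but the underlying inversion is the same.
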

\begin{proof}[\textbf{Proof}]

We show that for any $q \in V$, there exists some $(b,c) \in B(M)$ such that $q = (b,c) * p_{\infty,C_0} * p_0$. 

%Case 1
We first consider the case where $q =  p_{\infty,k^{-2}\mathbb{K}^{*0}} * p_{k}$. Let $h \vDash p_{\infty,k^{-2}\mathbb{K}^{*0}}$ and let $t = (\beta,\gamma) \vDash p_{k}|{M,h}$. Then $ht \vDash q$.

Since $\beta, \gamma \in k\mathbb{K}^{*0}$, we can write $\beta = k\beta'$ and $\gamma = k\gamma'$, where $\beta'$ and $\gamma'$ lie in the coset $C_0 = \mathbb{K}^{*0}$. Since $k \in \mathbb{C}((t))$, we see $(\beta',\gamma') \vDash p_0|_{M,h}$.

Hence $(k,0) \cdot p_0 = p_k$.

\begin{align*}
 p_{\infty,k^2\mathbb{K}^{*0}} \cdot (k,0) & = \begin{pmatrix} 1 & 0 \\ \alpha & 1 \end{pmatrix} \begin{pmatrix} k & 0 \\ 0 & k^{-1} \end{pmatrix} \\
           & = \begin{pmatrix} k & 0 \\ k^{-1}\alpha & k^{-1} \end{pmatrix} \\
           & = \begin{pmatrix} k & 0 \\ 0 & k^{-1} \end{pmatrix} \begin{pmatrix} 1 & 0 \\ k^{2}\alpha & 1 \end{pmatrix}
\end{align*}

Finally, since $\alpha$ lies in $k^{-2}\mathbb{K}^{*0}$ we can write $\alpha = k^{-2}\alpha'$, for $\alpha' \in p_{\infty,C_0}$. Hence $k^{2}\alpha = k^{2}k^{-2}\alpha' = \alpha' \in \mathbb{K}^{*0}$.

Hence, for $q \in V$ of the form $p_{\infty,k^{-2}\mathbb{K}^{*0}} * p_{k}$, we can find an element $t_0$ of $B(M)$ such that $q = t_0 \cdot p_{\infty,C_0} * p_0$.

%Case 2

We now show that we can do the same when $q \in V$ is of the form $p_{a,k^{-2}\mathbb{K}^{*0}} * p_{k}$. Let $\alpha_0 \vDash p_{a,k^{-2}\mathbb{K}^{*0}}$, and let $(\beta,\gamma) \vDash p_k|_{M,\alpha_0}$. We show there exists some element $t_0 \in B(M)$ such that $q = t_0 * p_{\infty,C_0} * p_0$.

Since $\alpha_0 \vDash p_{a,k^{-2}\mathbb{K}^{*0}}$, we can write $\alpha_0 = a + \epsilon$, where $\epsilon \in k^{-2}\mathbb{K}^{*0}$ and $v(\epsilon) > \mathbb{Z}$. Further, we can write $\epsilon = -k^{-2}\epsilon'$ for some $\epsilon' \in \mathbb{K}^{*0}$ with $v(\epsilon') > \mathbb{Z}$.

Hence $\alpha_0 =  a(1 + a^{-1}\epsilon) = a(1 - a^{-1}k^{-2}\epsilon')$.

Let $b$ be such that $a = (bk)^{-1} \in \mathbb{C}((t))$.

Then $\alpha_0 = a(1 = bk^{-1}\epsilon')$. Now, $bk^{-1}\epsilon' \in -bk^{-1}\mathbb{K}^{*0}$, and $v( bk^{-1}\epsilon') > \mathbb{Z}$. Hence $ 1 - bk^{-1}\epsilon'$ is in the infinitesimal neighbourhood of $1$, and hence so is $(1 - bk^{-1}\epsilon')^{-1}$ since this neighbourhood forms a multiplicative group.

Then $(1 - bk^{-1}\epsilon')(1+x) = 1$, for some $x \in dcl(\mathbb{C}((t)), \alpha_0)$, and one can see that $x \in bk^{-1}\mathbb{K}^{*0}$ with $v(x) > \mathbb{Z}$.

Hence we can write $\alpha_0 = a(1 + bk^{-1}\alpha)^{-1}$ where $\alpha \in \mathbb{K}^{*0}$ and $v(\alpha) > \mathbb{Z}$. Hence $\alpha \vDash p_{0,C_0}$ and we can write $\alpha_0 = a(1 + bk^{-1}\alpha)^{-1} = \frac{a}{1 + bk^{-1}\alpha} = \frac{b^{-1}\alpha^{-1}}{b + k\alpha^{-1}}$, using $a = (bk)^{-1}$ from above.

Since $\mathcal{J}$ is a minimal flow of $(B(M),S_B(M))$, we can find some element $y \in B(\bar{M}) \cap dcl(M,\alpha_0)$ such that $y \cdot p_0 = p_k$.

We see that this $y = (b + k\alpha^{-1}, k)$ where $\alpha^{-1} \vDash p_{\infty,C_0}$, since $\alpha  \vDash p_{0,C_0}$ and $\alpha \in dcl(M,\alpha_0)$. 

Further, we see that the cosets of $\mathbb{K}^{*0}$ in which the entries lie remain unchanged, since $\alpha^{-1} \in \mathbb{K}^{*0}$ and $v(\alpha^{-1}) < \mathbb{Z}$, we have $b + k\alpha^{-1} \in k\mathbb{K}^{*0}$. 

\begin{align*}
 \begin{pmatrix} 1 & 0 \\ \alpha_0 & 1 \end{pmatrix} \begin{pmatrix} \beta & \gamma \\ 0 & \beta^{-1} \end{pmatrix} & = \begin{pmatrix} 1 & 0 \\ \alpha_0 & 1 \end{pmatrix} \begin{pmatrix} b + k_\alpha^{-1} & k \\ 0 & (b + k\alpha^{-1})^{-1} \end{pmatrix} \begin{pmatrix} \beta' & \gamma ' \\ 0 & \beta'^{-1} \end{pmatrix} \\
 & = \begin{pmatrix} b + k\alpha^{-1} & k \\ \alpha_0(b + k\alpha^{-1}) & \alpha_0 k + (b + k\alpha^{-1})^{-1}  \end{pmatrix} \begin{pmatrix} \beta' & \gamma ' \\ 0 & \beta'^{-1} \end{pmatrix} \\
& = \begin{pmatrix} b + k\alpha^{-1} & k{-1} \\ \frac{b^{-1}\alpha^{-1}}{b + k\alpha^{-1}}(b + k\alpha^{-1}) & \frac{b^{-1}\alpha^{-1}}{b + k\alpha^{-1}}k + (b + k\alpha^{-1})^{-1} \end{pmatrix} \begin{pmatrix} \beta' & \gamma ' \\ 0 & \beta'^{-1} \end{pmatrix} \\
& = \begin{pmatrix} b + k\alpha^{-1} & k \\ b^{-1}\alpha^{-1} & \frac{b^{-1}\alpha^{-1}}{b + k\alpha^{-1}}k + (b + k\alpha^{-1})^{-1} \end{pmatrix} \begin{pmatrix} \beta' & \gamma ' \\ 0 & \beta'^{-1} \end{pmatrix} \\
 & = \begin{pmatrix} b + k\alpha^{-1} & k \\ b^{-1}\alpha^{-1} & b^{-1} \end{pmatrix} \begin{pmatrix} \beta' & \gamma ' \\ 0 & \beta'^{-1} \end{pmatrix} \\
& = \begin{pmatrix} b & k \\ 0 & b^{-1} \end{pmatrix} \begin{pmatrix} 1 & 0 \\ \alpha^{-1} & 1 \end{pmatrix} \begin{pmatrix} \beta' & \gamma' \\ 0 & \beta^{-1} \end{pmatrix}.
\end{align*}

Where $(\beta',\gamma')$ realise $p_0$ over $(M,\alpha_0)$. 

Then $(b,k) = t_0 \in B(M)$, $\alpha^{-1} \vDash p_{\infty,C_0}$. Further, $(\beta',\gamma') \vDash p_0|_{M,\alpha_0}$ and since $\alpha^{-1} \in dcl(M,\alpha_0)$, we have $(\beta',\gamma') \vDash p_0|_{M,\alpha^{-1}}$ as required.

Hence for any $q \in V$ of the form $p_{a,k^{-2}\mathbb{K}^{*0}} * p_{k}$, we can find some $t_0 \in B(M)$ such that $q = t_0 \cdot p_{\infty,C_0} * p_0$.

Hence $V \subseteq B(M) \cdot p_{\infty,C_0} * p_0$ and by \ref{Proposition_BMLeftInclusion}, we see $V = B(M) \cdot p_{\infty,C_0} * p_0$.
\end{proof}

\begin{Proposition} \label{Proposition_Z4ZLeftInclusion}
 The union $\bigcup\limits_{v \in V} \mathbb{Z}/4\mathbb{Z} \cdot v$ is a subset of $V \cup \{ (p_{0,k^{-2}\mathbb{K}^{*0}} * p_{k^3\mathbb{K}^{*0}}) \text{ : } k \in \mathbb{C}((t))^{*} \}$

\end{Proposition}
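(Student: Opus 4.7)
The plan is to compute the action of each element of $\mathbb{Z}/4\mathbb{Z}$ on a realisation $ht$ of an arbitrary $v \in V$, re-decompose the result via Proposition \ref{Proposition_C((t))Decomp}, and read off the resulting type. First I would observe that the identity acts trivially, and that $\left(\begin{smallmatrix} -1 & 0 \\ 0 & -1 \end{smallmatrix}\right)$ also acts trivially: it negates every entry of a realisation, but $-1 \in \mathbb{K}^{*0}$ (since $\mathbb{C}$ is algebraically closed and so contains $n$-th roots of $-1$ for every $n$), so the cosets of $\mathbb{K}^{*0}$ in which the entries lie are preserved. Since $\left(\begin{smallmatrix} 0 & 1 \\ -1 & 0 \end{smallmatrix}\right) = -\left(\begin{smallmatrix} 0 & -1 \\ 1 & 0 \end{smallmatrix}\right)$, it is enough to analyse $z = \left(\begin{smallmatrix} 0 & -1 \\ 1 & 0 \end{smallmatrix}\right)$ alone.

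Writing $h = \left(\begin{smallmatrix} 1 & 0 \\ \alpha & 1 \end{smallmatrix}\right)$ and $t = \left(\begin{smallmatrix} \beta & \gamma \\ 0 & \beta^{-1} \end{smallmatrix}\right)$, a direct computation gives
$$z(ht) = \begin{pmatrix} -\alpha\beta & -\alpha\gamma - \beta^{-1} \\ \beta & \gamma \end{pmatrix},$$
and solving $z(ht) = h't'$ in the normal form of Proposition \ref{Proposition_C((t))Decomp} yields $\alpha' = -\alpha^{-1}$, $\beta' = -\alpha\beta$, and $\gamma' = -\alpha\gamma - \beta^{-1}$. I would then split by the two cases of $V$ in Proposition \ref{Proposition_BMLeftInclusion}. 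If $v = p_{a,k^{-2}\mathbb{K}^{*0}} * p_k$ with $a \neq 0$, write $\alpha = a + \epsilon$ where $\epsilon \in k^{-2}\mathbb{K}^{*0}$ has positive infinite valuation; expanding $\alpha^{-1} = a^{-1}(1 + \epsilon/a)^{-1}$ and applying the corollary to Hensel's lemma to absorb the factor $(1 + \epsilon/a)^{-1} \in 1 + \mathfrak{M}$ into $\mathbb{K}^{*0}$ shows that $\alpha' \vDash p_{-a^{-1},(ak)^{-2}\mathbb{K}^{*0}}$, while analogous bookkeeping gives $\beta', \gamma' \in ak\,\mathbb{K}^{*0}$. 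Hence $z \cdot v = p_{-a^{-1},(ak)^{-2}\mathbb{K}^{*0}} * p_{ak}$, which is again a member of $V$.

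If instead $v = p_{\infty,k^{-2}\mathbb{K}^{*0}} * p_k$, then $v(\alpha) < \mathbb{Z}$ forces $v(\alpha^{-1}) > \mathbb{Z}$ and $\alpha^{-1} \in k^2\mathbb{K}^{*0}$, so $\alpha' \vDash p_{0,k^2\mathbb{K}^{*0}}$, which is a valuational type of kind (b). Multiplicativity of cosets immediately gives $\beta' = -\alpha\beta \in k^{-1}\mathbb{K}^{*0}$. For $\gamma'$, I would factor as $\gamma' = -\alpha\gamma\bigl(1 + (\alpha\gamma\beta)^{-1}\bigr)$ and use that $\gamma$ realises the heir of $p_{\infty,C_0}$ over $(M,h,\beta)$, so $v((\alpha\gamma\beta)^{-1}) > \mathbb{Z}$; the parenthesised factor then lies in $1+\mathfrak{M} \subseteq \mathbb{K}^{*0}$ by Hensel, so $\gamma' \in k^{-1}\mathbb{K}^{*0}$. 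Thus $z \cdot v$ has the form $p_{0,\cdot\mathbb{K}^{*0}} * p_{\cdot\mathbb{K}^{*0}}$ sitting in the extra set on the right-hand side of the claim. The main obstacle throughout is the coset bookkeeping: each time a sum of two terms with differing valuations is collapsed, a factor of the form $1 + \mathfrak{M}$ appears, and it is always the corollary to Hensel's lemma that absorbs it into $\mathbb{K}^{*0}$; once these simplifications are organised consistently, the stated inclusion follows directly from the case analysis.
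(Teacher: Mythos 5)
Your overall strategy---compute $z(ht)$, re-decompose as $h't'$ via Proposition \ref{Proposition_C((t))Decomp}, and track cosets using the corollary to Hensel's Lemma---is exactly the paper's, and your normal form $\alpha' = -\alpha^{-1}$, $\beta' = -\alpha\beta$, $\gamma' = -\alpha\gamma - \beta^{-1}$ is correct. Where you do more than the paper: the paper announces a split into two cases for $v$, computes only the $v = p_{\infty,\cdot\mathbb{K}^{*0}}*p_{\cdot\mathbb{K}^{*0}}$ case, and never returns to the residual case $v = p_{a,\cdot\mathbb{K}^{*0}}*p_{\cdot\mathbb{K}^{*0}}$ with $a\neq 0$. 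Your treatment of that case---writing $\alpha^{-1} = a^{-1}(1+\epsilon/a)^{-1}$, absorbing the Henselian factor into $\mathbb{K}^{*0}$, and concluding $z\cdot v = p_{-a^{-1},(ak)^{-2}\mathbb{K}^{*0}}*p_{ak\mathbb{K}^{*0}} \in V$---is correct and fills a genuine gap; without it the stated inclusion is not established.

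One step you should not wave away is the conclusion of the $p_\infty$ case. You compute $z\cdot v = p_{0,k^2\mathbb{K}^{*0}}*p_{k^{-1}\mathbb{K}^{*0}}$ and assert it ``sits in the extra set on the right-hand side of the claim,'' but the claimed extra set is $\{p_{0,k^{-2}\mathbb{K}^{*0}}*p_{k^3\mathbb{K}^{*0}}\}$, and after $k\mapsto k^{-1}$ your pair of cosets is $(k^{-2},k)$, not $(k^{-2},k^3)$; these parameterise different subsets of $(\mathbb{K}^*/\mathbb{K}^{*0})^2$ (compare valuations: your pairs satisfy ``first $= -2\cdot$second,'' the stated ones ``$3\cdot$first $= -2\cdot$second''). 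You should check this match explicitly; here it fails. The discrepancy traces back to an exponent inconsistency already present in Proposition \ref{Proposition_BMLeftInclusion}: its statement writes $k^2\mathbb{K}^{*0}$ but its proof derives $k^{-2}\mathbb{K}^{*0}$ in both cases. You (correctly) used the computed $k^{-2}$ form of $V$; the paper's proof of this proposition uses the uncorrected $k^2$ form and that is how $k^3$ appears. With the corrected $V$, the extra set should read $\{p_{0,k^{-2}\mathbb{K}^{*0}}*p_{k\mathbb{K}^{*0}}\}$. This does not affect the downstream description of $V'$ or the Ellis Group, but a careful proof should record that the computed cosets do not match the statement as written and say why.
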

 \begin{proof}[\textbf{Proof}]
 Let $z \in \mathbb{Z}$.  Clearly if $z = I_2$, the identity of $\mathbb{Z}/4\mathbb{Z}$, then $z \cdot v = v$ for all $v \in V$.

 We split into 2 cases. First let $h \vDash p_{\infty,k^2\mathbb{K}^{*0}}$ and $t \vDash p_{k\mathbb{K}^{*0}}|_{M,h}$ for some $k \in \mathbb{C}((t))$. Then $ht \vDash v$ for some $v \in V$.

 Suppose $z = -I_2$. Since $-I_2$ is in the centre of $SL_2$, we can write $z \cdot  p_{\infty,k^2\mathbb{K}^{*0}} * p_{k\mathbb{K}^{*0}} = p_{\infty,k^2\mathbb{K}^{*0}} * z \cdot p_{k\mathbb{K}^{*0}}$.

 Then $\mathcal{J}$ is a minimal subflow of $(B(M),S_B(M))$ and so $z \cdot p_{k\mathbb{K}^{*0}} = p_j$ for some $p_j \in \mathcal{J}$. However, since $-1 \in \mathbb{K}^{*0}$, every entry of $z \cdot p_{k\mathbb{K}^{*0}}$ is in the same coset as the corresponding entry in $p_{k\mathbb{K}^{*0}}$, and so $z \cdot p_{k\mathbb{K}^{*0}} = p_{k\mathbb{K}^{*0}}$ and hence $z \cdot p_{\infty,k^2\mathbb{K}^{*0}} * p_{k\mathbb{K}^{*0}} = p_{\infty,k^2\mathbb{K}^{*0}} * p_{k\mathbb{K}^{*0}}$.

 Finally, suppose $z = \begin{pmatrix} 0 & -1 \\ 1 & 0 \end{pmatrix}$.

 Then $z \cdot p_{\infty,k^2\mathbb{K}^{*0}} * p_{k\mathbb{K}^{*0}} = tp(zht/M)$ and we see;
  \begin{align*}
   zht & = \begin{pmatrix} 0 & -1 \\ 1 & 0 \end{pmatrix} \begin{pmatrix} 1 & 0 \\ \alpha & 1 \end{pmatrix} \begin{pmatrix} \beta & \gamma \\ 0 & \beta^{-1} \end{pmatrix} \\
       & = \begin{pmatrix} -\alpha & -1 \\ 1 & 0 \end{pmatrix} \begin{pmatrix} \beta & \gamma \\ 0 & \beta^{-1} \end{pmatrix} \\
       & = \begin{pmatrix} 1 & 0 \\ -\alpha^{-1} & 1 \end{pmatrix} \begin{pmatrix} -\alpha & -1 \\ 0 & -\alpha^{-1} \end{pmatrix} \begin{pmatrix} \beta & \gamma \\ 0 & \beta^{-1} \end{pmatrix} \\
       & = \begin{pmatrix} 1 & 0 \\ -\alpha^{-1} & 1 \end{pmatrix} \begin{pmatrix} -\alpha\beta & -\alpha\gamma - \beta^{-1} \\ 0 & -\alpha^{-1}\beta^{-1} \end{pmatrix}
  \end{align*}

 Observe that $-\alpha^{-1} \in k^{-2}\mathbb{K}^{*0}$ and $v(-\alpha^{-1}) > \mathbb{Z}$. Further, $-\alpha\beta \in k^3\mathbb{K}^{*0}$, $-\alpha\gamma \in k^3\mathbb{K}^{*0}$.

 Hence $-\alpha^{-1} \vDash p_{0,k^{-2}\mathbb{K}^{*0}}$ and $(-\alpha\beta, -\alpha\gamma-\beta^{-1}) \vDash p_{k^3\mathbb{K}^{*0}}|_{M,-\alpha^{-1}}$.

 The case where $z = \begin{pmatrix} 0 & 1 \\ -1 & 0 \end{pmatrix}$ follows similarly. Taking the union over elements of $V$ we see that $\bigcup\limits_{v \in V} \mathbb{Z}/4\mathbb{Z} \cdot v \subseteq V \cup \{ (p_{0,k^{-2}\mathbb{K}^{*0}} * p_{k^3\mathbb{K}^{*0}}) \text{ : } k \in \mathbb{C}((t))^{*} \}$.
 \end{proof}
 
 \begin{Proposition} \label{Proposition_Z4ZEquality}
 $V \cup \{ (p_{0,k^{-2}\mathbb{K}^{*0}} * p_{k^3\mathbb{K}^{*0}}) \text{ : } k \in \mathbb{C}((t))^{*} \}$ is a subset of $\bigcup\limits_{v \in V} \mathbb{Z}/4\mathbb{Z} \cdot v$.
 \end{Proposition}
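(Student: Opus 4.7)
The plan is to establish the inclusion term-by-term, essentially by reading the computation of Proposition \ref{Proposition_Z4ZLeftInclusion} in reverse. The two pieces of the union on the left-hand side are handled separately, and the main content is showing that the Weyl-element action is in fact surjective onto the ``extra'' cosets.

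For the first piece, $V \subseteq \bigcup_{v \in V} \mathbb{Z}/4\mathbb{Z} \cdot v$ is immediate: the identity $I_2 \in \mathbb{Z}/4\mathbb{Z}$ fixes every $v \in V$.

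For the second piece, I would fix $k \in \mathbb{C}((t))^{*}$ and set $z = \left(\begin{smallmatrix} 0 & -1 \\ 1 & 0 \end{smallmatrix}\right) \in \mathbb{Z}/4\mathbb{Z}$ together with $v = p_{\infty, k^2\mathbb{K}^{*0}} * p_{k\mathbb{K}^{*0}} \in V$, and claim $z \cdot v = p_{0, k^{-2}\mathbb{K}^{*0}} * p_{k^3\mathbb{K}^{*0}}$. This is precisely the content of the matrix calculation already performed in Proposition \ref{Proposition_Z4ZLeftInclusion}, now used as an equality rather than an inclusion: with $h = \left(\begin{smallmatrix} 1 & 0 \\ \alpha & 1 \end{smallmatrix}\right) \vDash p_{\infty, k^2\mathbb{K}^{*0}}$ and $t = \left(\begin{smallmatrix} \beta & \gamma \\ 0 & \beta^{-1} \end{smallmatrix}\right) \vDash p_{k\mathbb{K}^{*0}}|_{M,h}$, the block factorisation
\[
zht = \begin{pmatrix} 1 & 0 \\ -\alpha^{-1} & 1 \end{pmatrix} \begin{pmatrix} -\alpha\beta & -\alpha\gamma - \beta^{-1} \\ 0 & -\alpha^{-1}\beta^{-1} \end{pmatrix}
\]
puts $-\alpha^{-1}$ in the coset $k^{-2}\mathbb{K}^{*0}$ with valuation exceeding $\mathbb{Z}$, so $-\alpha^{-1} \vDash p_{0, k^{-2}\mathbb{K}^{*0}}$, and leaves an upper-triangular factor whose non-zero entries all lie in $k^3\mathbb{K}^{*0}$.

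The one step that genuinely requires care is the heir bookkeeping: we need the upper-triangular factor to realise the \emph{heir} of $p_{k^3\mathbb{K}^{*0}}$ over $(M, -\alpha^{-1})$, not just the type. This should follow from three observations used together: (i) the upper-triangular factor is a left Borel translate of $t$ by a matrix lying in $dcl(M, h) \cap B(\bar M)$; (ii) $-\alpha^{-1} \in dcl(M, h)$, so restricting parameters from $(M, h)$ down to $(M, -\alpha^{-1})$ preserves the heir property; and (iii) $\mathcal{J}$ is $B(\bar M)$-invariant by Proposition \ref{Proposition_MinSubBorel}, so the translated type still lies in $\mathcal{J}$ and is therefore uniquely determined by its coset. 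Together these give $tp(zht/M) = p_{0, k^{-2}\mathbb{K}^{*0}} * p_{k^3\mathbb{K}^{*0}}$, completing the inclusion and, in combination with Proposition \ref{Proposition_Z4ZLeftInclusion}, the equality of the two unions.
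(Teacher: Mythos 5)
Your proposal is correct and takes essentially the same route as the paper: both hinge on the matrix factorisation $z h t = h'' t''$ with $z = \left(\begin{smallmatrix} 0 & -1 \\ 1 & 0 \end{smallmatrix}\right)$, which converts $p_{\infty,k^2\mathbb{K}^{*0}} * p_{k\mathbb{K}^{*0}}$ into $p_{0,k^{-2}\mathbb{K}^{*0}} * p_{k^{3}\mathbb{K}^{*0}}$. The paper redoes the computation starting from a realisation of $p_{0,k^{-2}\mathbb{K}^{*0}} * p_{k^{3}\mathbb{K}^{*0}}$ and peeling off the Weyl element, whereas you simply re-read the identity already established in Proposition \ref{Proposition_Z4ZLeftInclusion} as an equality of types; this is the same argument up to direction of travel, and your extra care on the heir bookkeeping via observations (i)--(iii) is a welcome filling-in of a step the paper leaves implicit.
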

 \begin{proof}[\textbf{Proof}]

Clearly any element $v \in V$ lies in the set $\bigcup\limits_{v \in V} \mathbb{Z}/4\mathbb{Z} \cdot v$ since we could choose $z \in \mathbb{Z}/4\mathbb{Z}$ to be the identity element.

Hence we just need to show that $(p_{0,k^{-2}\mathbb{K}^{*0}} * p_{k^3\mathbb{K}^{*0}})$ can be expressed in the form $z \cdot v$ for some $z \in \mathbb{Z}/4\mathbb{Z}$ and some $v \in V$.

Fix some arbitrary non-zero $k \in \mathbb{C}((t))$. Let $h = \begin{pmatrix} 1 & 0 \\ \alpha & 1 \end{pmatrix}$, where $\alpha \vDash p_{0,k^{-2}\mathbb{K}^{*0}}$. Let $t = (\beta,\gamma) \vDash p_{k^{3}\mathbb{K}^{*0}}|_{M,h}$.

Since $\mathcal{J}$ is a minimal subflow of $(B(M),S_B(M))$, and $t \vDash p_{k^{3}\mathbb{K}^{*0}}|_{M,h}$, we can find some $(b,c) \in B(\bar{M}) \cap dcl(M,h)$ such that $(b,c) * p_{k\mathbb{K}^{*0}}|_{M,h} = p_{k^{3}\mathbb{K}^{*0}}$.

In particular, we know that $b$ uniquely determines the coset in this factorisation. Note that since $\alpha \vDash p_{0,k^{-2}\mathbb{K}^{*0}}$, we see $-\alpha^{-1} \vDash p_{\infty,k^2\mathbb{K}^{*0}}$. As such, we can choose $(b,c) = (-\alpha^{-1}, -1)$, and hence;

 \begin{align*}
   ht & = \begin{pmatrix} 1 & 0 \\ \alpha & 1 \end{pmatrix} \begin{pmatrix} \beta & \gamma \\ 0 & \beta^{-1} \end{pmatrix} \\
      & = \begin{pmatrix} 1 & 0 \\ \alpha & 1 \end{pmatrix} \begin{pmatrix} -\alpha^{-1} & -1 \\ 0 & -\alpha \end{pmatrix} \begin{pmatrix} \beta' & \gamma' \\ 0 & \beta'^{-1} \end{pmatrix} \\
      & = \begin{pmatrix} -\alpha^{-1} & -1 \\ 1 & 0 \end{pmatrix} \begin{pmatrix} \beta' & \gamma' \\ 0 & \beta'^{-1} \end{pmatrix} \\
      & = \begin{pmatrix} 0 & -1 \\ 1 & 0 \end{pmatrix} \begin{pmatrix} 1 & 0 \\ \alpha^{-1} & 1 \end{pmatrix} \begin{pmatrix} \beta' & \gamma' \\ 0 & \beta'^{-1} \end{pmatrix} \\
      & = zh't'
  \end{align*}

 Where $z \in \mathbb{Z}/4\mathbb{Z}$, $h' \vDash p_{\infty,k^2\mathbb{K}^{*0}}$ and $t' \vDash p_{k\mathbb{K}^{*0}}|_{M,h}$.

 Since $h' \in dcl(M,h)$, we also see $t' \vDash p_{k\mathbb{K}^{*0}|_{M,h'}}$, and so $zh't' \vDash z \cdot p_{\infty,k^2\mathbb{K}^{*0}} * p_{k\mathbb{K}^{*0}}$, which is an element of $\bigcup\limits_{v \in V} \mathbb{Z}/4\mathbb{Z} \cdot v$ as required.
\end{proof}

Hence, we have demonstrated that the $G(M)$-orbit of $p_{\infty,C_0} * p_0$ is precisely $V' = V \cup \{ (p_{0,k^{-2}\mathbb{K}^{*0}} * p_{k^3\mathbb{K}^{*0}}) \text{ : } k \in \mathbb{C}((t))^{*0} \}$. This set is not closed, and hence not a subflow minimal or otherwise. However, we can take the closure of $V'$ and we claim that this is indeed minimal, as we now show.

Taking closures of both sides, we see that $cl(G(M) \cdot p_{\infty,C_0} * p_0) = cl(V')$. By Fact \ref{Fact_Gorbit}, for any type $p$, $cl(G(M) * p) = S_G(M) * p$. Hence the closure of the $G(M)$-orbit of $p_{\infty,C_0} * p_0$ is the $S_G(M)$-orbit of $p_{\infty,C_0} * p_0$. Hence $S_G(M) \cdot p_{\infty,C_0} * p_0 = cl(V')$.

\begin{Lemma}
  $S_G(M) * p_{\infty,C_0} * p_0 \subseteq S_1(M) * \mathcal{J}$.

Moreover, every element $s * p_{\infty,C_0} * p_0$ is of the form $r * p$ with $r \in S_1(M)$, $p \in \mathcal{J}$.
\end{Lemma}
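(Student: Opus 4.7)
The plan is to compute $tp(gh_1 t_1 / M)$ directly by factoring $gh_1 t_1$ as an element of $H(\bar M) \cdot B(\bar M)$. Although $g$ alone may require a non-trivial $\mathbb{Z}/4\mathbb{Z}$-factor in the decomposition of Proposition \ref{Proposition_C((t))Decomp}, the product $gh_1$ always has non-zero top-left entry, so it lies in $H(\bar M) \cdot B(\bar M)$ directly with no rotation needed.

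Fix $s \in S_G(M)$ and a realisation $g \vDash s$ in $\bar M$. Let $h_1 \vDash p_{\infty, C_0}|_{M,g}$ with parameter $\alpha_1$, and let $t_1 \vDash p_0|_{M, g, h_1}$; then $s * p_{\infty, C_0} * p_0 = tp(g h_1 t_1 / M)$. Writing the entries of $g$ as $a, b, c, d$, the top-left entry of $g h_1$ is $a + b\alpha_1$. If $b = 0$ this equals $a \neq 0$; if $b \neq 0$, then $a + b\alpha_1 = 0$ would force $\alpha_1 = -a/b \in dcl(M, g)$, contradicting the fact that $v(\alpha_1)$ lies below every element of $dcl(M, g) \cap \Gamma$ (a consequence of $h_1$ realising the heir of $p_{\infty, C_0}$ over $M, g$). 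Hence the top-left entry is non-zero and case 1 of the proof of Proposition \ref{Proposition_C((t))Decomp} yields a factorisation $g h_1 = h^* t'$ with $h^* \in H(\bar M)$ and $t' \in B(\bar M)$, both lying in $dcl(M, g, h_1)$. Setting $t^* := t' t_1 \in B(\bar M)$ then gives $g h_1 t_1 = h^* t^*$.

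It remains to identify $tp(t^* / M, h^*)$ as the heir of some $p \in \mathcal{J}$. By Lemma \ref{Lemma_p0}, the global heir $\bar{p_0}$ of $p_0$ is definable over $M$, and so is every left $B(\bar M)$-translate of it; in particular $\bar q := t' \cdot \bar{p_0}$ lies in $\bar{\mathcal{J}}$ and is definable over $M$, so its restriction $p := \bar q|_M$ lies in $\mathcal{J}$. Since $\bar{p_0}$ is the unique heir of $p_0$ over $\bar M$ and $t_1 \vDash p_0|_{M, g, h_1}$ is an heir, $t_1 \vDash \bar{p_0}|_{M, g, h_1}$; restricting the parameter set to $(M, h^*, t') \subseteq dcl(M, g, h_1)$ yields $t_1 \vDash \bar{p_0}|_{M, h^*, t'}$. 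Translating by $t'$ on the left gives $t^* \vDash \bar q|_{M, h^*, t'}$, and restricting parameters further to $(M, h^*)$ shows $tp(t^*/M, h^*) = \bar q|_{M, h^*}$, which by the $M$-definability of $\bar q$ is exactly the heir of $p$ over $(M, h^*)$.

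Setting $r := tp(h^*/M) \in S_1(M)$ (via the identification of $H$ with $\mathbb{G}_a$ through its lower-left parameter), we conclude $s * p_{\infty, C_0} * p_0 = tp(h^* t^* / M) = r * p \in S_1(M) * \mathcal{J}$. The main obstacle is the final identification: $t'$ depends on both $g$ and $h_1$, so a priori the type of $t^* = t' t_1$ over $(M, h^*)$ need not be the heir of anything in $\mathcal{J}$. The saving grace is Lemma \ref{Lemma_p0}: it guarantees that all left $B(\bar M)$-translates of $\bar{p_0}$ remain definable over $M$, and therefore their restrictions to $M$ are still in $\mathcal{J}$ and their restrictions to $(M, h^*)$ are genuine heirs, which is exactly what is needed to conclude $tp(h^* t^*/M) = r * p$ in the sense of the $*$-product.
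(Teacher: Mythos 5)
Your proof is correct and, to my reading, more explicit than the paper's own argument, which dispatches the $S_G(M)$-orbit case in one sentence by saying it ``behaves in a similar way'' to the $G(M)$-orbit computed in Propositions \ref{Proposition_HMEquality}--\ref{Proposition_Z4ZEquality}. The route you take is genuinely different and cleaner: rather than first decomposing $g$ itself as $z h t$ (which may require a $\mathbb{Z}/4\mathbb{Z}$ factor, with entries now in $\bar{M}$ rather than $M$, and then re-running the three propositions with saturated parameters), you observe that the relevant product $g h_1$ always has non-zero top-left entry --- precisely because $v(\alpha_1)$ sits below $dcl(M,g)\cap\Gamma$, so $\alpha_1 \neq -a/b$ --- and hence $gh_1$ factors directly in $H(\bar{M})\cdot B(\bar{M})$ with both factors in $dcl(M,g,h_1)$. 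The remaining step, identifying $tp(t^*/M,h^*)$ as the heir of a type in $\mathcal{J}$, is exactly where Lemma \ref{Lemma_p0} is needed, and you invoke it correctly: $M$-definability of left $B(\bar{M})$-translates of $\bar{p_0}$ guarantees both that $\bar{q}=t'\cdot\bar{p_0}$ restricts into $\mathcal{J}$ and that its restriction to $(M,h^*)$ is the genuine heir required by the definition of $*$. This yields a fully rigorous proof of the ``moreover'' clause, which the paper leaves implicit.

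One small point worth noting for completeness: the passage from ``$t_1\vDash p_0|_{M,g,h_1}$'' to ``$t_1\vDash\bar{p_0}|_{M,h^*,t'}$'' uses uniqueness of heirs, which holds here because $p_0$ is $M$-definable (Lemma \ref{Lemma_p0}); Fact \ref{Fact_UniqueHeirsCoheirs} is stated under the hypothesis that all types over $M$ are definable, whereas Fact \ref{Fact_Language} only asserts this for $1$-types, so it is the explicit definability from Lemma \ref{Lemma_p0} that you are (correctly) relying on. That is the right move, but it deserves a word so the reader knows the multi-variable definability is not being taken for granted.
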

\begin{proof}[\textbf{Proof}]
We can see from Propositions \ref{Proposition_HMEquality}, \ref{Proposition_BMEquality} and \ref{Proposition_Z4ZEquality} and see that the $G(M)$-orbit of $p_{\infty,C_0} * p_0 = V' \subset S_1(M) * \mathcal{J}$. The $S_G(M)$-orbit behaves in a similar way, though it will be a proper superset of $V'$, however still a subset of $S_1(M) * \mathcal{J}$. 
 \end{proof}
 
\begin{Lemma}
Let $p, p' \in \mathcal{J}$. Let $q \in S_1(M)$. Then $p * q * p'$ can be expressed as an element of $S_1(M) / \{ p_{\infty,C_k} \text{ : } k \in \mathbb{C}((t)) \} * \mathcal{J}$.
\end{Lemma}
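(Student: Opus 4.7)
The plan is to compute $p * q * p'$ via realizations and then apply the group decomposition of Proposition \ref{Proposition_C((t))Decomp} to rewrite the product in the form $h' t'$ with $h' \in H(\bar{M})$ and $t' \in B(\bar{M})$, reading off the factorization of $p * q * p'$ as the $*$-product $tp(h'/M) * tp(t'/M, h')$.

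Concretely, I fix $t_1 \vDash p$ with $t_1 = \left(\begin{smallmatrix} b_1 & c_1 \\ 0 & b_1^{-1} \end{smallmatrix}\right)$, let $h = \left(\begin{smallmatrix} 1 & 0 \\ \alpha & 1 \end{smallmatrix}\right)$ realize the heir of $q$ over $(M, t_1)$, and let $t_2 \vDash p'|_{M, t_1, h}$. Assuming the top-left entry $b_1 + c_1 \alpha$ of $t_1 h$ is non-zero (the degenerate case is resolved by premultiplying with the $\mathbb{Z}/4\mathbb{Z}$-element $\left(\begin{smallmatrix} 0 & -1 \\ 1 & 0 \end{smallmatrix}\right)$ as in Proposition \ref{Proposition_C((t))Decomp}), this decomposition yields $t_1 h = h' t_1'$ with $\alpha' = b_1^{-1}\alpha / (b_1 + c_1 \alpha)$ and $t_1' \in B(\bar{M})$. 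Thus $t_1 h t_2 = h'(t_1' t_2)$; since $t_1' \in B(\bar{M})$ acts within the $B(\bar{M})$-orbit $\bar{\mathcal{J}}$ of $\bar{p_0}$, the product $t_1' t_2$ realizes an element of $\bar{\mathcal{J}}$ whose restriction supplies the $\mathcal{J}$-factor.

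The key remaining step is to verify that $q' := tp(\alpha'/M)$ does not belong to $\{p_{\infty, k\mathbb{K}^{*0}} : k \in \mathbb{C}((t))^*\}$, by case analysis on $q$ using the classification in Lemma \ref{Lemma_1TypesC((t))}. The critical case is $q = p_{\infty, C}$: here $v(\alpha) < \Gamma$, and rewriting $\alpha' = (b_1 c_1)^{-1}(1 + b_1 c_1^{-1}\alpha^{-1})^{-1}$ in the style of Proposition \ref{Proposition_BMLeftInclusion} reveals that $1 + b_1 c_1^{-1}\alpha^{-1} \in 1 + \mathfrak{M}$, so $\alpha'$ is infinitesimally close to the fixed element $(b_1 c_1)^{-1} \in \bar{M}$. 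Hence $tp(\alpha'/M)$ is valuational of kind (b), not of kind (c), and in particular avoids the excluded set. For $q$ of any other kind, $\alpha$ already has bounded valuation relative to $M$ and direct substitution confirms the same for $\alpha'$, so $q'$ is never of kind (c) at all.

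The principal technical obstacle is the detailed bookkeeping of the $\mathbb{K}^{*0}$-cosets in which $\alpha'$ and the entries of $t_1' t_2$ lie, which requires repeated use of the corollary to Hensel's Lemma (\ref{Lemma_Hensels}) to conclude that "$1$ plus an infinitesimal" lies in $\mathbb{K}^{*0}$. This parallels the arguments in Propositions \ref{Proposition_BMLeftInclusion} and \ref{Proposition_BMEquality} but with one further layer of composition due to the three-fold product $p * q * p'$, and it is here that one must be most careful to ensure that every heir condition in the sequence $t_1, h, t_2$ propagates correctly through the rewriting $t_1 h t_2 = h'(t_1' t_2)$.
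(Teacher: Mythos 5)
Your approach matches the paper's intent: the paper gives only a one-line proof deferring to the matrix manipulation of Proposition \ref{Proposition_BMEquality} and noting that $p \in \mathcal{J}$ eliminates the case $c = 0$, and your proposal correctly fleshes this out by writing $t_1 h t_2 = h'(t_1' t_2)$ with $\alpha' = b_1^{-1}\alpha/(b_1 + c_1\alpha)$ and then verifying that $tp(\alpha'/M)$ avoids kind~(c).

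There is, however, a gap in the central step. You conclude that $tp(\alpha'/M)$ is of kind~(b) on the grounds that $\alpha'$ is infinitesimally close to the fixed element $(b_1c_1)^{-1} \in \bar{M}$. But $(b_1c_1)^{-1}$ lies in $\bar{M}$, not in $M$, so being infinitesimally close to it does not by itself place $tp(\alpha'/M)$ among the types $p_{a,C}$ with $a \in \mathbb{C}((t))$; in principle $\alpha'$ could still have unbounded negative valuation over $M$. What actually closes the argument is an observation you do not make: since $t_1 = (b_1,c_1)$ realises a type in $\mathcal{J}$, the construction of $\bar{p_0}$ as $p_{0,C_0}*p_{\infty,C_0}$ forces $v(c_1) < -v(b_1) - n$ for every $n \in \mathbb{Z}$ (the heir condition of $\gamma$ over $\beta$), hence $v((b_1c_1)^{-1}) = -v(b_1)-v(c_1) > n$ for all $n$, so $(b_1c_1)^{-1}$ is itself infinitesimal and therefore $\alpha'$ is infinitesimal, i.e.\ $tp(\alpha'/M) = p_{0,C}$ for some coset $C$. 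This is precisely the content of the paper's cryptic remark that ``insisting $p \in \mathcal{J}$ removes the case where $c=0$'': it is the divergence of $v(c_1)$ relative to $-v(b_1)$, not merely $c_1 \neq 0$, that pins $\alpha'$ in the infinitesimal neighbourhood of $0$. Relatedly, your claim that for $q$ of any kind other than~(c) ``$\alpha$ already has bounded valuation relative to $M$'' is false when $q = p_{0,C}$ (there $\alpha$ is infinitesimal, so $v(\alpha)$ is unbounded above), though the conclusion that $\alpha'$ is still not of kind~(c) does hold after a separate computation; and the degenerate case $b_1 + c_1\alpha = 0$ you mention never actually arises, since it would force $-b_1c_1^{-1} = \alpha \in dcl(M,t_1)$ contradicting the heir condition on $\alpha$ unless $q$ is realised, in which case $b_1 \in c_1 M$ contradicts $v(b_1) > \mathbb{Z} > v(c_1)$.
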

\begin{proof}[\textbf{Proof}]
  This is easy to see using the same method as in Proposition \ref{Proposition_BMEquality}, and note that insisting $p \in \mathcal{J}$ removes the case where $c = 0$. 
\end{proof}

 \begin{Theorem}
  $cl(V')$ is a minimal subflow of $(G(M),S_G(M))$.
 \end{Theorem}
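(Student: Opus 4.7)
By Theorem \ref{Theorem_Ellis}, $cl(V')$ is a minimal subflow of $(G(M),S_G(M))$ if and only if it is a minimal closed left ideal of $(S_G(M),*)$. Since $u := p_{\infty,C_0}*p_0$ is idempotent (Proposition \ref{Proposition_C((t))Idemp}) and $cl(V') = S_G(M)*u$ by the discussion preceding the theorem, $cl(V')$ is automatically a closed left ideal, and its minimality is equivalent to
\[
    u \in S_G(M)*q \qquad \text{for every } q \in cl(V'),
\]
since the reverse containment $S_G(M)*q \subseteq cl(V')$ is automatic.

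The plan is to exhibit an explicit inverter. Fix $q \in cl(V')$ and, by the first preceding lemma, write $q = r*p$ with $r \in S_1(M)$ and $p \in \mathcal{J}$. Since $\mathcal{J}$ is a group with identity $p_0$ (Proposition \ref{Proposition_MinSubBorel}(ii)), pick $p^{-1} \in \mathcal{J} \subseteq S_G(M)$ with $p^{-1}*p = p_0$, and set
\[
    s \;:=\; p_{\infty,C_0}*p^{-1} \;\in\; S_G(M),
\]
where $p_{\infty,C_0}$ is viewed as a $1$-type acting via the inclusion $H(M) \hookrightarrow G(M)$. To verify $s*q = u$, first apply the second preceding lemma to $p^{-1}*q = p^{-1}*r*p$ to rewrite it as $r'*p_0$ for some $r' \in S_1(M) \setminus \{p_{\infty,C_k}\}$; the $\mathcal{J}$-factor is $p_0$ because the underlying matrix computation (analogous to Proposition \ref{Proposition_BMEquality}) realises the group product $p^{-1}*p$ in $\mathcal{J}$. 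Associativity then gives $s*q = (p_{\infty,C_0}*r')*p_0$. A short case analysis on the four kinds of $1$-type from Lemma \ref{Lemma_1TypesC((t))}, using that $v(\alpha_s) < \Gamma_M$ for $\alpha_s \models p_{\infty,C_0}$ is more negative than the valuation of any element of the heir extension of $r'$ (this is precisely where the exclusion $r' \neq p_{\infty,C}$ is used) and that $1+\mathfrak{M} \subseteq \mathbb{K}^{*0} = C_0$ by the corollary to Hensel's Lemma, shows $p_{\infty,C_0}*r' = p_{\infty,C_0}$. Hence $s*q = p_{\infty,C_0}*p_0 = u$, as required.

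The main obstacle is the verification that the $\mathcal{J}$-factor in $p^{-1}*r*p$ is exactly $p_0$ and not some other element of $\mathcal{J}$. The proof of the second preceding lemma carries out the triple-product matrix computation in Borel form, and one must carefully track the cosets of $\mathbb{K}^{*0}$ through the successive Borel factorisations to confirm that the two $\mathcal{J}$-factors combine according to the group law of $\mathcal{J}$ while the middle $1$-type $r$ only perturbs the $H$-factor. Once this coset bookkeeping is in place, the inverter $s$ constructed above closes the argument and establishes minimality.
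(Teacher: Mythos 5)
The overall strategy is the same as the paper's: every element of $cl(V')$ has the form $r*p$ with $r \in S_1(M)$ and $p \in \mathcal{J}$, and one constructs an explicit $s \in S_G(M)$ sending it back to the idempotent $u = p_{\infty,C_0}*p_0$. However, there is a genuine gap in your choice of inverter. You take $s = p_{\infty,C_0}*p^{-1}$, where $p^{-1}$ is the inverse of $p$ in the group $\mathcal{J}$, and assert that the $\mathcal{J}$-factor of $p^{-1}*r*p$ is exactly $p_0$ because ``the middle $1$-type $r$ only perturbs the $H$-factor.'' This is false: the middle $1$-type also perturbs the coset of the Borel factor.

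To see this concretely, take $q = p_{a,k^2\mathbb{K}^{*0}}*p_{k\mathbb{K}^{*0}} \in V$ with $a \in \mathbb{C}((t))^*$, $a \neq 0$, and follow the matrix computation from Proposition \ref{Proposition_BMLeftInclusion} with $(b,c) \vDash p^{-1} = p_{k^{-1}\mathbb{K}^{*0}}$, $\alpha \vDash r|_{M,b,c}$, $(\beta,\gamma) \vDash p|_{M,b,c,\alpha}$. The resulting Borel factor is $(b+c\alpha,\,c)\cdot(\beta,\gamma)$, whose $\mathcal{J}$-type has coset $(\text{coset of } b+c\alpha)\cdot(\text{coset of }\beta)$. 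Writing $\alpha = a + \epsilon$ with $v(\epsilon) > \Gamma_{M,b,c}$ one checks $v(c\epsilon) > v(b) > v(ca)$, so $b + c\alpha$ lies in the coset of $ca$, which is $(k^{-1}\mathbb{K}^{*0}) \cdot (\text{coset of }a)$. Multiplying by the coset $k\mathbb{K}^{*0}$ of $\beta$, the $\mathcal{J}$-factor lands in $p_{a\mathbb{K}^{*0}}$, not $p_0$, unless $a \in \mathbb{K}^{*0}$. So $s*q \neq u$ for generic $a$. The paper avoids this by letting the chosen $p_j \in \mathcal{J}$ depend on \emph{both} the $\mathcal{J}$-component $p$ \emph{and} the $1$-type $r$ (effectively cancelling the coset contribution of the middle factor as well), rather than taking $p_j = p^{-1}$. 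Your framing of the problem, the reduction to ``find an inverter'', and the use of the two preceding lemmas are all on target; it is only the specific formula $s = p_{\infty,C_0}*p^{-1}$ that needs to be replaced by an $r$-dependent choice.
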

 \begin{proof}[\textbf{Proof}]
  Any point in $cl(V')$ is of the form $s * p_{\infty,C_0} * p_0$, and by the above lemma we can show any type of the form $s * p_{\infty,C_0} * p_0$ can be expressed as an element $q * p$ of $S_1(M) * \mathcal{J}$.

  We claim that for any $r$ in $S_1(M) * \mathcal{J}$, we can demonstrate that $p_{\infty,C_0} * p_0$ is in the orbit-closure $cl(G(M)*r) = S_G(M) * r$.

  Let $r = q * p \in S_1(M) * \mathcal{J}$. 

  Then we can find some type $p_{\infty,C_0} \cdot p_j$, such that $p_{\infty,C_0} * p_j * q * p = p_{\infty,C_0} * q' * p_0$, using a similar argument to Proposition \ref{Proposition_BMEquality}. However, here we see that the realisation $(b,c)$ of the heir of $p_j$ ensures that $q' \notin \{ p_{\infty,C_k} \text{ : } k \in \mathbb{C}((t))^* \}$. As such, $p_{\infty,C_0} * q' = p_{\infty,C_0}$.
  
  Hence we can find some $s \in S_G(M)$ such that $s * r = p_{\infty,C_0} * p_0$, and so $p_{\infty,C_0} * p_0$ is in the orbit-closure of $r$ for any $r \in cl(V')$.
  
  Since $p_{\infty,C_0} * p_0$ is in the $S_G(M)$-orbit of any element of $cl(V')$, and $cl(V') = S_G(M) * p_{\infty,C_0} * p_{\infty}$, we see that the $cl(V')$ is the orbit-closure of any type in $cl(V')$, and hence minimal.
 \end{proof}

\section{The Ellis Group of $SL_2(\mathbb{C}((t)))$}

To obtain the Ellis Group of $(G(M),S_G(M))$, from \ref{Theorem_Ellis}, we act on the minimal subflow of $(G(M),S_G(M))$ (equivalently, the minimal closed left ideal of $(S_G(M), *)$ ) by an idempotent, namely $p_{\infty,C_0} * p_0$. 

\begin{Theorem} \label{Theorem_EllisGroupC((t))}
 The Ellis Group of $(G(M),S_G(M))$ is $p_{\infty,C_0} * p_0 * cl(V') = p_{\infty,C_0} * \mathcal{J}$, and is isomorphic to $B/B^0$.
\end{Theorem}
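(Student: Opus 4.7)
The plan is a direct application of Theorem \ref{Theorem_Ellis}(ii). The previous theorem shows $cl(V')$ is a minimal closed left ideal of $(S_G(M),*)$, and Proposition \ref{Proposition_C((t))Idemp} shows $u := p_{\infty,C_0} * p_0$ is an idempotent; since $u \in cl(V')$, the Ellis group is exactly $u * cl(V')$. The substantive work is therefore to identify this set, and then to recognise it as $B/B^0$.

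First I would establish the equality $u * cl(V') = p_{\infty,C_0} * \mathcal{J}$. For $(\subseteq)$, take any $x \in cl(V')$ and apply the first of the two lemmas preceding the statement to write $x = q * p$ with $q \in S_1(M)$ and $p \in \mathcal{J}$. Then
\[
u * x = p_{\infty,C_0} * (p_0 * q * p),
\]
and the second lemma rewrites $p_0 * q * p = q' * p'$ with $q' \in S_1(M) \setminus \{p_{\infty,C_k} : k \in \mathbb{C}((t))\}$ and $p' \in \mathcal{J}$. The key absorption step is the identity $p_{\infty,C_0} * q' = p_{\infty,C_0}$ whenever $q'$ avoids that exceptional family: if $\alpha_0 \vDash p_{\infty,C_0}$ and $\alpha \vDash q'|_{M,\alpha_0}$, then $v(\alpha) \geq n$ for some $n \in \mathbb{Z}$ while $v(\alpha_0) < \mathbb{Z}$, so $v(\alpha_0 + \alpha) = v(\alpha_0) < \mathbb{Z}$; writing $\alpha_0 + \alpha = \alpha_0(1 + \alpha_0^{-1}\alpha)$ and observing $1 + \alpha_0^{-1}\alpha \in 1 + \mathfrak{M}$, the corollary to Hensel's Lemma places this factor in $C_0$, so $\alpha_0 + \alpha \vDash p_{\infty,C_0}$. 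This is exactly the argument of Proposition \ref{Proposition_MinSubAdd}(i). Hence $u * x = p_{\infty,C_0} * p' \in p_{\infty,C_0} * \mathcal{J}$. For $(\supseteq)$, given $p_i \in \mathcal{J}$, one exhibits $p_{\infty,C_0} * p_i$ inside $cl(V')$ using that $\mathcal{J} = B(M) \cdot p_0$ together with the $B(M)$-translation computations of Propositions \ref{Proposition_BMLeftInclusion}--\ref{Proposition_BMEquality}; the same absorption identity then gives $u * (p_{\infty,C_0} * p_i) = p_{\infty,C_0} * p_i$, so $p_{\infty,C_0} * p_i \in u * cl(V')$.

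Next I would identify $p_{\infty,C_0} * \mathcal{J}$, under the inherited operation $*$, with $\mathcal{J}$ itself via the map
\[
\Phi : \mathcal{J} \longrightarrow p_{\infty,C_0} * \mathcal{J}, \qquad p_i \longmapsto p_{\infty,C_0} * p_i.
\]
Bijectivity follows by reading off the $B$-coordinate of a realisation, as in Proposition \ref{Proposition_BMEquality}. To see $\Phi$ is a group homomorphism, I need
\[
(p_{\infty,C_0} * p_i) * (p_{\infty,C_0} * p_j) = p_{\infty,C_0} * (p_i * p_j),
\]
which reduces via associativity to verifying $p_i * p_{\infty,C_0} = p_{\infty,C_0} * p_i'$ for a suitable $p_i' \in \mathcal{J}$ (indeed $p_i' = p_i$), by the same coset-preservation argument used to prove $p_0 * p_i = p_i$ in the claim within Proposition \ref{Proposition_MinSubBorel}(ii). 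Composing $\Phi^{-1}$ with the isomorphism $\mathcal{J} \cong B(\bar{M})/B(\bar{M})^0$ of Proposition \ref{Proposition_MinSubBorel}(ii) then gives the stated isomorphism $p_{\infty,C_0} * \mathcal{J} \cong B/B^0$.

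The main obstacle is the absorption identity $p_{\infty,C_0} * q' = p_{\infty,C_0}$ and the closely related identities needed to verify the homomorphism property; each rests on the same pair of valuation-theoretic facts---that $v(\alpha_0) < \mathbb{Z}$ dominates every finite-valuation perturbation, and that Hensel's Lemma places $1 + \mathfrak{M}$ inside $C_0 = \mathbb{K}^{*0}$---so the coset of $\mathbb{K}^{*0}$ is preserved under the relevant sums and products. Everything else is bookkeeping against the group $\mathcal{J}$ and the decomposition of Proposition \ref{Proposition_C((t))Decomp}.
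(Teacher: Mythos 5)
Your overall architecture matches the paper's: identify the Ellis group as $u * cl(V')$ where $u = p_{\infty,C_0} * p_0$, compute it to be $p_{\infty,C_0} * \mathcal{J}$ via the two preceding lemmas and the absorption identity $p_{\infty,C_0} * q' = p_{\infty,C_0}$ for $q'$ outside the $\{p_{\infty,C_k}\}$ family, and then identify this set with $B(\bar{M})/B(\bar{M})^0$. Your explicit derivation of the absorption identity, and your attempt to spell out the isomorphism that the paper dismisses with ``it is easy to see,'' are welcome additions.

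However, the homomorphism step contains a genuine error. You claim the verification of $(p_{\infty,C_0} * p_i) * (p_{\infty,C_0} * p_j) = p_{\infty,C_0} * (p_i * p_j)$ ``reduces via associativity to verifying $p_i * p_{\infty,C_0} = p_{\infty,C_0} * p_i$,'' citing the coset-preservation argument inside Proposition~\ref{Proposition_MinSubBorel}(ii). That argument is internal to the Borel group, but here you are trying to commute a type concentrating on $B$ past a type concentrating on $H$, and these do \emph{not} commute. Concretely, if $t = (\beta,\gamma) \vDash p_i$ (so $v(\beta) > \mathbb{Z}$, $v(\gamma) < \mathbb{Z}$) and $h$ realizes the heir of $p_{\infty,C_0}$ over $(M,t)$ with $H$-coordinate $\alpha$, then $th$ has top-left entry $\beta + \gamma\alpha$, whose valuation is $v(\gamma)+v(\alpha) < \mathbb{Z}$. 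On the other hand, any realization of $p_{\infty,C_0} * p_{i'}$ is of the form $h't'$ with top-left entry $\beta'$ satisfying $v(\beta') > \mathbb{Z}$. The two types therefore differ already in the valuation of the $(1,1)$ coordinate, so $p_i * p_{\infty,C_0} \neq p_{\infty,C_0} * p_{i'}$ for any $p_{i'} \in \mathcal{J}$. (This is to be expected: $B$ does not normalize $H$ in $SL_2$, so there is no reason for an $H\cdot B$ rearrangement of $th$ to be this tame.)

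The homomorphism property is nevertheless true, but the correct route is the direct matrix rearrangement already used for the idempotence in Proposition~\ref{Proposition_C((t))Idemp}: write $h_0 t_0 h t = h'' t''$ where $h'' = \bigl(\begin{smallmatrix} 1 & 0 \\ a + \frac{b^{-1}\alpha}{b+c\alpha} & 1 \end{smallmatrix}\bigr)$ and $t''$ has top-left entry $\beta(b+c\alpha)$, then track cosets of $\mathbb{K}^{*0}$: with $b,c$ in coset $C_i$, $\alpha \in C_0$ and $\beta,\gamma$ in coset $C_j$, one finds $b+c\alpha \in C_i$, hence $\beta(b+c\alpha) \in C_iC_j$, while $h'' \vDash p_{\infty,C_0}$ as before. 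Comparing with the Borel computation $(b,c)(\beta,\gamma) = (b\beta, b\gamma + c\beta^{-1})$ shows $p_i * p_j$ also lands in coset $C_iC_j$, giving the required identity. So replace the commuting claim with this direct coset-tracking computation and your proposal is sound.
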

\begin{proof}[\textbf{Proof}]
    This is clear to see. Take any element $r = q * p \in cl(V')$. We compute $p_{\infty,C_0} * p_0 * r$.

   We note that $p_0 * r$ is of the form $q' * p'$ for some $q' \in S_1(M)/ \{ p_{\infty,C_k} \text{ : } k \in \mathbb{C}((t))^* \}$ and $p' \in \mathcal{J}$.

   Then $p_{\infty,C_0} * p_0 * q * p = p_{\infty,C_0} * p'$ for some $p' \in \mathcal{J}$, and hence $p_{\infty,C_0} * p_0 * cl(V') \subseteq p_{\infty,C_0} * \mathcal{J}$.

  To demonstrate equality, we must show that $p'$ can range over all cosets of $\mathbb{K}^{*0}$. That is, for any $p_{\infty,C_0} * p_j$, we can find some $r \in cl(V')$ with $p_{\infty,C_0} * p_0 * r = p_{\infty,C_0} * p_j$.

  This is clear to see from Proposition \ref{Proposition_BMEquality}. Since $V \subset cl(V')$, we see that types of the form $p_{\infty,k^2\mathbb{K}^{*0}} * p_k$ for all $k \in \mathbb{C}((t))$ are contained $cl(V')$. One can simply choose $r = p_{\infty,j^2\mathbb{K}^{*0}} * p_j$ and show $p_{\infty,C_0} * p_0 * r = p_{\infty,C_0} * p_j$. Hence $p_{\infty,C_0} * \mathcal{J} \subseteq p_{\infty,C_0} * p_0 * cl(V')$.

 Hence $p_{\infty,C_0} * p_0 * cl(V') = p_{\infty,C_0} * \mathcal{J}$, and so the Ellis Group of $(G(M),S_G(M))$ is precisely $p_{\infty,C_0} * \mathcal{J}$. Since $\mathcal{J}$ is isomorphic to $B/B^0$, it is easy to see that $p_{\infty,C_0} * \mathcal{J}$ is isomorphic to $B/B^0$ also.
\end{proof}

Hence, we have demonstrated that the Ellis Group of $(G(M),S_G(M))$ is not isomorphic to $G/G^{00}$; namely the Ellis Group has infinite elements whereas $G/G^{00}$ is trivial.

We discuss briefly how we believe this may generalise to a larger class of metastable definable groups. We noted that $SL_2(\mathbb{C}((t)))$ admits an Iwasawa-like decomposition, though as $SL_2(\mathbb{C}[[t]])$ was not even locally compact we could not adopt a similar approach to that of \textbf{\cite{SL2QP}}. However, using stable domination one can show that $SL_2(\mathbb{C}[[t]])$ admits a unique 2-sided global generic, $q$, whose restriction to $\mathbb{C}$ is generic in $SL_2(\mathbb{C})$ (This fact is originally shown for $ACVF$ in \textbf{\cite{StabDom}}).

Using the results of \textbf{\cite{HR2017}} one can see that stably dominated groups are definably amenable (indeed in \textbf{\cite{HPP}} there is mention that these groups are in fact $fsg$ groups). This provides access to a generalisation in this setting if one is capable of dealing with the infinite intersection in the decomposition $SL_n(K) = B \times SL_n(\mathcal{O}_K)$. In this setting, since we lose information about the cosets of $P_n$ predicates due to algebraic closure, we may expect that $SL_2(K)$ would admit a trivial Ellis Group and hence be isomorphic to $SL_2(K)/SL_2(K)^{00}$. We suggest that a general description could be found for metastable definable groups that admit a maximally stably dominated - definably amenable group decomposition.

\section{Acknowledgements}

The author would like to thank his supervisors Dr. Davide Penazzi, Dr. Charlotte Kestner and Dr. Marcus Tressl for their assistance and support in this work. The author would also like to thank Dr. Sylvy Anscombe for many helpful discussions. Finally, the author would like to extend thanks to the reviewer for exceptionally thorough and helpful feedback.

\bibliographystyle{plain}
\bibliography{References}

\end{document}